\documentclass{amsart}
\usepackage{graphicx} 
\usepackage{amsthm}
\usepackage{amsmath}
\usepackage{amssymb}
\usepackage{hyperref}
\usepackage{mathtools}
\usepackage{setspace}
\usepackage{booktabs}
\usepackage{float}
\theoremstyle{plain}
\newtheorem{theorem}{Theorem}[section]
\theoremstyle{definition}
\newtheorem{Def}[theorem]{Definition}
\theoremstyle{plain}
\newtheorem{prop}[theorem]{Proposition}
\theoremstyle{plain}
\newtheorem{corollary}[theorem]{Corollary}
\theoremstyle{plain}
\newtheorem{lemma}[theorem]{Lemma}
\theoremstyle{remark}
\newtheorem{remark}[theorem]{Remark}
\theoremstyle{remark}
\newtheorem{example}[theorem]{Example}
\theoremstyle{plain}

\theoremstyle{definition}
\newtheorem{question}[theorem]{Question}
\theoremstyle{plain}

\usepackage[margin=1.2in]{geometry}
\usepackage{tikz-cd}
\usepackage{orcidlink}
\DeclareMathOperator{\Gal}{\mathrm{Gal}}
\DeclareMathOperator{\Cov}{\mathrm{Cov}}
\DeclareMathOperator{\loc}{\mathrm{loc}}
\DeclareMathOperator{\lk}{\mathrm{lk}}
\DeclareMathOperator{\Spec}{\mathrm{Spec}}
\DeclareMathOperator{\im}{\mathrm{im}}
\DeclareMathOperator{\Frob}{\mathrm{Frob}}
\DeclareMathOperator{\coker}{\mathrm{coker}}

\DeclareMathOperator{\et}{\acute et}
\newcounter{main}

\theoremstyle{plain}
\newtheorem{mainthm}[main]{\textbf{Theorem}}
\theoremstyle{plain}
\newtheorem{maincor}[main]{\textbf{Corollary}}
\def\doubleprod#1#2{\ooalign{$#1\prod$\cr$#1\coprod$\cr}}
\DeclareMathOperator*{\Rprod}{\mathpalette\doubleprod\relax}

\usepackage[pagewise, mathlines]{lineno} 
\usepackage{longtable,caption}
\numberwithin{equation}{section}

\subjclass[2020]{57M12, 11R32, 11S25, 11S20, 11R42, 20E18, 18E50}
\keywords{Knot, 3-manifold, Chebotarev law, Galois cohomology, Neukirch--Uchida theorem, anabelian geometry, profinite rigidity, arithmetic topology}

\title{A Neukirch--Uchida theorem for 3-manifolds}
\author{Nadav Gropper}
\address{Department of Mathematics, Seoul National University, Seoul, South Korea}
\email{Nadav.Gropper@gmail.com \orcidlink{0009000913557848}}

 \author{Jun Ueki}
\address{Department of Mathematics, Ochanomizu University; 2-1-1 Otsuka, Bunkyo-ku, 112-8610, Tokyo, Japan} 
\email{uekijun46@gmail.com \orcidlink{0000000337991074}}

\author{Yi Wang}
\address{Department of Mathematics, University of Illinois Urbana-Champaign, Urbana, USA}
\email{yiwang20@illinois.edu \orcidlink{0009000597676684}}

\date{\today}

\begin{document}

\begin{abstract}
The classical Neukirch--Uchida theorem states that the absolute Galois group determines a number field up to isomorphism. We prove an analogue of this theorem for 3-manifolds in the framework of arithmetic topology. We study infinite links in 3-manifolds that behave like the set of primes, satisfying a Chebotarev density property. Relative to such a stably Chebotarev link $\mathcal{L}$, we define the absolute Galois group of a 3-manifold as the inverse limit of profinite completions of finite sublink complements. Our main result shows that two branched covers of $S^3$ over $\mathcal{L}$ are homeomorphic if and only if their absolute Galois groups relative to $\mathcal{L}$ are isomorphic via a characteristic-preserving isomorphism. The proof translates the key ideas from the number-theoretic argument into topology, relying on Hilbert ramification theory for infinite covers and local-global principles. In doing so, it also provides a systematic justification for viewing Chebotarev links as the precise topological analogue of prime numbers in anabelian geometry. In addition, we discuss further conditions for links to play the role of prime numbers. 
\end{abstract} 

\maketitle

\tableofcontents
\section{Introduction}

The main goal of this paper is to import tools from anabelian geometry to the study of 3-manifolds in the spirit of arithmetic topology. In particular, we develop a topological analogue of the \emph{Neukirch--Uchida theorem}, which will give a rigidity result for a 3-manifold with respect to a profinite group.  

\subsection{Neukirch--Uchida and Mostow rigidity}

In number theory, the Neukirch--Uchida theorem is a foundational result in anabelian geometry, and is a sort of rigidity result for number fields with respect to their \'etale fundamental groups. Let $\overline{K}$ denote an algebraic closure of a given number field $K$. 
Recall that the \emph{absolute Galois group} $\Gal(\overline{K}|K)$ is a profinite group. Given two number fields $K_1, K_2$ and some isomorphism $\alpha: \overline{K_1} \to \overline{K_2}$ for which $\alpha(K_1) = K_2$, we get an induced isomorphism $\alpha^\ast:\Gal(\overline{K_1}/K_1) \to \Gal(\overline{K_2}/K_2)$ defined by
\begin{equation}
    \alpha^*(g_1)(x_2) = \alpha(g_1(\alpha^{-1}(x_2))) \ \ \ \ \ g_1 \in \Gal(\overline{K_1}/K_1), x_2 \in \overline{K_2}.
\end{equation}
The Neukirch--Uchida theorem asserts the converse:

\begin{theorem}[The Neukirch--Uchida theorem \cite{neukirchcrelle, neukirchinvenciones, uchida}]
Let $K_1, K_2$ be number fields, and let $\sigma: \Gal(\overline{K_1}/K_1) \to \Gal(\overline{K_2}/K_2)$ be an isomorphism of profinite groups. Then there exists a unique isomorphism $\alpha: (\overline{K_1}, K_1) \to (\overline{K_2}, K_2)$ inducing $\sigma$, i.e., $\sigma = \alpha^*$. 
\end{theorem}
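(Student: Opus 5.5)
The plan follows Neukirch's and Uchida's original strategy: first recover local data from the group, then descend to a field isomorphism on each finite layer, and pass to the limit.

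\textbf{Step 1: recognizing decomposition groups.} Write $G_i=\Gal(\overline{K_i}/K_i)$. We characterize, purely group-theoretically, the decomposition groups $D_{\mathfrak P}\subset G_i$ of primes $\mathfrak P$ of $\overline{K_i}$. By local class field theory and Tate local duality, a closed subgroup $D\subset G_i$ is a decomposition group at some prime above a rational prime $p$ if and only if it is maximal among subgroups that are abstractly isomorphic to open subgroups of absolute Galois groups of $p$-adic fields. Such subgroups are detected by cohomological invariants: $\mathrm{cd}_p=2$, the Euler--Poincaré characteristic formula, and $\dim H^1$ with $\mathbb{F}_\ell$-coefficients. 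The key global input, and what I expect to be the main obstacle, is that two distinct primes of $\overline{K}$ have decomposition groups with trivial intersection, and that any closed subgroup of this local type lies in a unique $D_{\mathfrak P}$. This is proved via the Chebotarev density theorem together with a Hasse principle for $H^2$ (the injectivity of $\mathrm{Br}$ into $\bigoplus \mathrm{Br}_v$, or Poitou--Tate). Granting this, $\sigma$ induces a bijection $\mathfrak P\mapsto\sigma(\mathfrak P)$ of primes of $\overline{K_1}$ and $\overline{K_2}$, compatible with restriction to every finite extension $L_1$ and its image $L_2$ under the correspondence of open subgroups. From the local invariants we then read off the residue characteristic, the residue degree and the local degree. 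Hence corresponding primes lie over the same rational prime with the same $N\mathfrak p$. By Chebotarev, this forces $L_1$ and $L_2$ to have the same Dedekind zeta function, so they are arithmetically equivalent.

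\textbf{Step 2: from local correspondence to field isomorphism.} Arithmetic equivalence alone does not suffice, so we use Uchida's refinement. Local class field theory identifies the abelianized decomposition groups with $\widehat{L_{i,\mathfrak p}^\times}$, compatibly with the reciprocity maps. This gives isomorphisms of multiplicative groups of the completions, and then of the id\`ele groups modulo closures. Working in the solvable (or even maximal abelian-by-finite) extensions, we recover $L_i^\times$ inside the id\`eles as the kernel of the global reciprocity map. The additive structure is recovered by characterizing elements of the form $1+x$ through their local images in the completions at enough primes. For this we use Kummer theory and the fact that $\sigma$ preserves inertia and Frobenius. This yields a field isomorphism $\alpha_L: L_1\to L_2$ compatible with $\sigma$ up to conjugation, with $\alpha_L(K_1)=K_2$.

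\textbf{Step 3: limit and uniqueness.} Uniqueness holds because the centralizer of $G_i$ in itself is trivial; its center is trivial since no element commutes with all decomposition groups. For each $L$, the set of isomorphisms $L_1\to L_2$ inducing $\sigma$ on the corresponding open subgroups is finite and nonempty. By compactness (König's lemma over the directed system of finite Galois $L$), these sets have a compatible family, which glues to $\alpha:\overline{K_1}\to\overline{K_2}$ with $\alpha^*=\sigma$. Triviality of centralizers then shows that the compatible family is unique, hence that $\alpha$ is unique.
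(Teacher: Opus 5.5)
Your Steps 1 and 3 follow the standard outline, which is also the sketch in Section~2.4. Step~2, however, carries the whole weight of the theorem, and it is not a proof; it also contains a false claim.

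The kernel of the global reciprocity map is not $L^\times$. On $\mathbb{A}_L^\times$ it is $\overline{L^\times (L_\infty^\times)^0}$. On the finite id\`eles, which are all that profinite data sees, it contains the closure of the totally positive units in $\prod_{\mathfrak p}\mathcal{O}_{\mathfrak p}^\times$. That closure is strictly larger than $\mathcal{O}_{L,+}^\times$ as soon as the unit rank is positive.

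More seriously, ``characterizing elements of the form $1+x$ through their local images'' has no mechanism behind it. The local isomorphisms $\widehat{L_{1,\mathfrak p}^\times}\cong\widehat{L_{2,\mathfrak q}^\times}$ coming from $\sigma$ are only group isomorphisms. They need not be induced by field isomorphisms of the completions: absolute Galois groups of $p$-adic fields have many non-field-theoretic automorphisms (compare the filtration hypothesis in Mochizuki's local theorem). So local images carry no additive information by themselves. Reconstructing $L^\times$ and its addition from $G_L$ is possible, but that is Hoshi's mono-anabelian reconstruction, which rests on his field-theoreticity criterion. It is not ``Uchida's refinement.'' Even granting it, you would still owe an argument that the resulting $\alpha_L$ actually induces $\sigma$.

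The missing idea is that your Step~1 already supplies what Uchida's purely group-theoretic argument needs. Put $\overline{K_1}=\overline{K_2}=\overline{\mathbb{Q}}$, so that $G_{K_i}\subset G_{\mathbb{Q}}$.

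\begin{enumerate}
\item Let $G_N\subset G_{K_1}$ be open and normal in $G_{\mathbb{Q}}$. Your norm-preserving bijection of primes makes the fixed field of $\sigma(G_N)$ arithmetically equivalent to the Galois field $N$, hence equal to it (Bauer/Gassmann). So $\sigma(G_N)=G_N$, and $\sigma$ descends to $\sigma_N:\Gal(N/K_1)\to\Gal(N/K_2)$.
\item By Chebotarev, every $g\in\Gal(N/K_1)$ is a Frobenius at an unramified prime, with decomposition group $\langle g\rangle$. Combined with the local correspondence, this shows that $\sigma_N$ sends each cyclic subgroup to a $\Gal(N/\mathbb{Q})$-conjugate one.
\item Choose $p\equiv 1 \pmod{[N:\mathbb{Q}]}$ and solve the split embedding problem with kernel $\mathbb{F}_p[\Gal(N/\mathbb{Q})]$ properly. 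This is where Grunwald--Wang-type surjectivity enters.
\item The idempotent computation in the group ring then shows that $\sigma_N$ is conjugation by some $g_0\in\Gal(N/\mathbb{Q})$, and your compactness argument produces $\alpha$.
\end{enumerate}

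Finally, your uniqueness argument targets the wrong group. The element $\beta^{-1}\alpha$ lies in $G_{\mathbb{Q}}$ and may act nontrivially on $K_1$, so trivial center of $G_{K_i}$ is not enough. What you need is that the centralizer of the open subgroup $G_{K_1}$ in $G_{\mathbb{Q}}$ is trivial. Your decomposition-group argument gives this once you run it inside $G_{\mathbb{Q}}$.
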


Arithmetic topology, which originated from observations of Mazur \cite{mazur2}, later expanded on in \cite{mazur1973notes}, pursues analogies between the behavior of knots in 3-manifolds and primes in number fields. For a comprehensive overview on arithmetic topology, see Morishita \cite{morishita2012knots}. Under classical arithmetic topology dictionaries roughly:
\[
\begin{array}{ccc}
\text{\ensuremath{\Spec(\mathbb{Z})\cup\{\infty\}}} & \leftrightsquigarrow & \text{3-sphere \ensuremath{S^{3}}}\\ \text{ Number ring $\Spec(\mathcal{O}_K)\cup\{\text{infinite places}\}$} & \leftrightsquigarrow & \text{ closed 3-manifold}\\
\text{Set of primes \ensuremath{S=\{p_1,\dots,p_n\}}} & \leftrightsquigarrow & \text{Link \ensuremath{L=\cup K_i\subset S^{3}}}\\
\text{\ensuremath{\mathbb{Q}_{p}}} & \leftrightsquigarrow & \text{Boundary of a tubular neighbourhood of \ensuremath{K}}\\
\text{\ensuremath{\mathcal{O}_{K,S}}} & \leftrightsquigarrow & \text{complement of link $M\setminus L$.}
\end{array}
\]
Some further analogies which are relevant to this paper are the analogue of Hilbert ramification theory developed by Ueki \cite{uekibranched} and the 3-manifold version of idelic class field theory developed by Niibo \cite{niibo} and Niibo-Ueki \cite{uekiniibo}. 

\medskip

The Neukirch--Uchida theorem is said to be an analogue of the following:

\begin{theorem}[Mostow's rigidity theorem \cite{mostow}]
Let $M, N$ be two finite-volume hyperbolic 3-manifolds such that $\pi_1(M) \cong \pi_1(N)$. Then $M$ and $N$ are homeomorphic.
\end{theorem}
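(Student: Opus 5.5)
The plan is to reduce Mostow rigidity to a statement about isometries of hyperbolic space. By uniformization, write $M=\mathbb{H}^3/\Gamma_1$ and $N=\mathbb{H}^3/\Gamma_2$, where $\Gamma_1\cong\pi_1(M)$ and $\Gamma_2\cong\pi_1(N)$ are discrete torsion-free lattices in $\mathrm{Isom}(\mathbb{H}^3)$. Fix an isomorphism $\phi:\Gamma_1\to\Gamma_2$. It suffices to find an isometry $g$ of $\mathbb{H}^3$ with $g\gamma g^{-1}=\phi(\gamma)$ for all $\gamma\in\Gamma_1$. Such a $g$ is $\phi$-equivariant and so descends to an isometry $M\to N$, which is in particular a homeomorphism.

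\textbf{Step 1: a quasi-isometry.} Since $M$ and $N$ are aspherical, $\phi$ is induced by a homotopy equivalence $f:M\to N$. In the closed case, compactness lets us choose $f$ Lipschitz, and its lift $\tilde f:\mathbb{H}^3\to\mathbb{H}^3$ is a $\phi$-equivariant quasi-isometry (Švarc--Milnor). In the finite-volume, cusped case, I would first show that $\phi$ sends parabolic subgroups to parabolic subgroups: these are exactly the maximal $\mathbb{Z}^2$ subgroups, since a hyperbolic element has virtually cyclic centralizer. With this in hand, $f$ can be chosen to respect the thick--thin decomposition, and the lift is again a quasi-isometry on the truncated pieces.

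\textbf{Step 2: the boundary map.} Quasi-geodesics in $\mathbb{H}^3$ stay a bounded distance from geodesics (Morse lemma). Hence $\tilde f$ extends to a $\phi$-equivariant homeomorphism $\partial\tilde f:S^2\to S^2$ of the sphere at infinity. Moreover, this extension is quasiconformal.

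\textbf{Step 3: the boundary map is Möbius.} This is the main obstacle. A quasiconformal map is differentiable almost everywhere with a measurable field of ellipses. Equivariance makes this field $\Gamma_1$-invariant. The action of $\Gamma_1$ on $S^2$ is ergodic, because the geodesic flow on the finite-volume quotient is ergodic (Hopf argument). It follows that the field of ellipses is conformal almost everywhere: otherwise the line field of major axes would give a $\Gamma_1$-invariant measurable line field, contradicting ergodicity together with Sullivan's or Tukia's argument. A $1$-quasiconformal homeomorphism of $S^2$ is Möbius.

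An alternative route for Step 3 is Gromov's simplicial volume. One shows that $\partial\tilde f$ maps vertices of regular ideal simplices to vertices of regular ideal simplices, using the equality $\|M\|=\|N\|$ and the fact that only regular simplices achieve the maximal volume $v_3$. A map with this property is Möbius. I would try the ergodic line-field argument first.

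\textbf{Step 4: conclusion.} Let $g$ be the Möbius map from Step 3, extended to an isometry of $\mathbb{H}^3$. By construction, $g\gamma g^{-1}=\phi(\gamma)$ on $\partial\mathbb{H}^3$, and an isometry of $\mathbb{H}^3$ is determined by its boundary action, so this identity holds in $\mathrm{Isom}(\mathbb{H}^3)$. Therefore $g$ descends to an isometry $M\cong N$.
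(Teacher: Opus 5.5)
The paper gives no proof of this theorem. It is quoted in the introduction only as motivation for the Neukirch--Uchida analogy (for cusped manifolds it is really the Mostow--Prasad theorem), and nothing later depends on it. Your proposal therefore has to be measured against the standard literature. Its architecture is the standard one: equivariant quasi-isometry, quasiconformal boundary map, conformality almost everywhere, extension to an isometry. However, two steps do not go through as you justify them.

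\textbf{Step 3 (the main gap).} The ergodicity you get from the Hopf argument, of $\Gamma_1$ on $S^2$ or even on pairs of points of $S^2$, only shows that the eccentricity of the ellipse field is a.e.\ constant. It cannot exclude a $\Gamma_1$-invariant measurable line field $x\mapsto \ell(x)$ of constant eccentricity.

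To see why, write $G=\mathrm{PSL}_2(\mathbb{C})$ and let $MAN$ be the stabilizer of a point at infinity. An invariant line field is then a measurable function on $\Gamma_1\backslash G$ that is invariant under $AN$ but only \emph{equivariant} under the rotation group $M$, which acts transitively on directions. Ergodicity of $AM$, i.e.\ of the geodesic flow on $\Gamma_1\backslash G/M$, says nothing about such functions. So ``contradicting ergodicity'' is not the mechanism. Deferring to Sullivan's theorem (no invariant line field on the limit set of a finitely generated Kleinian group) hands the core of the proof to a result at least as strong as the one you are proving.

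The missing idea is ergodicity of the \emph{frame} flow, i.e.\ of $A$ alone on $\Gamma_1\backslash G$. This holds by Moore's ergodicity theorem, since $\Gamma_1$ is a lattice and $A$ is non-compact. Concretely:
\begin{itemize}
\item For a frame $(v,w)$ at $p\in\mathbb{H}^3$, let $x$ be the forward endpoint of $v$ and $C\subset S^2$ the boundary circle of the totally geodesic plane spanned by $v,w$. Let $\Theta(v,w)\in\mathbb{R}/\pi\mathbb{Z}$ be the angle from $T_xC$ to $\ell(x)$.
\item Since $\Gamma_1$ acts conformally, $\Theta$ is $\Gamma_1$-invariant.
\item The frame flow fixes both $x$ and $C$, so $\Theta$ is invariant under it, hence a.e.\ constant.
\item But rotating $w$ about $v$ by $\theta$ rotates $T_xC$ by $\theta$, shifting $\Theta$ by $\theta$ up to sign. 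This is impossible for an a.e.\ constant function.
\end{itemize}
Alternatively, carry out Sullivan's blow-up argument at a Lebesgue density point that is a conical limit point; in finite volume almost every point of $S^2$ is conical.

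\textbf{Step 1 in the cusped case.} Step 2 needs a $\phi$-equivariant quasi-isometry of all of $\mathbb{H}^3$, not of ``the truncated pieces.'' The complement of an equivariant family of horoballs, with its path metric, contains undistorted flat horospheres and is not quasi-isometric to $\mathbb{H}^3$. So control there alone does not let you apply the Morse lemma in $\mathbb{H}^3$.

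Your observation that $\phi$ carries maximal $\mathbb{Z}^2$ subgroups to maximal $\mathbb{Z}^2$ subgroups is exactly what repairs this:
\begin{itemize}
\item Conjugate a cusp of $\Gamma_1$ and the corresponding cusp of $\Gamma_2$ to $\infty$ in the upper half-space model.
\item On a sufficiently high horoball, define the map by $(z,t)\mapsto(Az,t)$, where $A$ is the $\mathbb{R}$-linear map of $\mathbb{C}$ realizing $\phi$ on the translation lattices. This map is equivariant and bi-Lipschitz for the hyperbolic metric.
\item After a bounded homotopy on the boundary tori, it glues to the Lipschitz map on the compact core.
\item The same construction for $\phi^{-1}$ gives a coarse inverse, so the glued lift is a global equivariant quasi-isometry.
\end{itemize}

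With these two repairs your outline is a complete proof. Read ``M\"obius'' to include anti-conformal maps when $\phi$ reverses orientation.
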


There are two main differences between the Mostow rigidity theorem and the Neukirch--Uchida theorem, which we will address in this paper:
\begin{itemize}
    \item For a finite set of primes $S$, $\Spec(\mathbb{Z}_S)$, where $\mathbb{Z}_S = \mathbb{Z}[1/p\mid p\in S]$ is analogous to a hyperbolic 3-manifold with finitely many torus boundaries. To represent the spectrum of a number field, one needs to invert the set of all primes $S$ (i.e. an infinite set); for instance, $\mathbb{Q}$ can be viewed as $\mathbb{Z}_S$ where $S$ is the set of all prime integers. In the Mostow rigidity theorem, the hyperbolic manifolds have finitely many cusps, while a faithful analogue of the Neukirch--Uchida theorem should deal with infinitely many knots. 
    \item Galois groups are always profinite groups, as opposed to the discrete fundamental groups in Mostow rigidity. A rigidity result for $\pi_1^{\et}(\Spec(\mathbb{Z}_S))$ would be a rigidity result of 3-manifolds with respect to a profinite group.
\end{itemize}

Our main theorem is an analogue of the Neukirch--Uchida theorem for 3-manifolds which involves links with countably many components and profinite groups.

\subsection{Main results}

One major question of arithmetic topology is the question of which sets of knots are analogous to the set of primes in number fields, especially with dealing with countable sets. Mazur \cite{mazur} further proposed an analogue of the Chebotarev density theorem for links with countably infinite components, which was proven by McMullen \cite{mcmullen2013knots}. Ueki \cite{ueki2021chebotarev} later defined \emph{stably Chebotarev} links, which are Chebotarev links for which any preimage link in a branched cover satisfies the Chebotarev property; examples of stably Chebotarev links can be found in \cite{mcmullen2013knots} and \cite{ueki2021chebotarev}. 

\medskip

We fix throughout a stably Chebotarev link $\mathcal{L}$ in $S^3$, together with a fixed ordering of $\mathcal{L}$.

\medskip

For any finite branched cover $h:M\rightarrow S^3$ in $\Cov(S^3, \mathcal{L})$, we take a link in $M$, $\mathcal{L}_M:=h^{-1}(\mathcal{L})$ with the induced ordering (see Section \ref{sec:chebo}). We consider the category ${\rm Cov}(M,\mathcal{L})$ denote the set of all finite branched covers with base points branched along finite sublinks of $\mathcal{L}$. (See Section \ref{sec:background} for more detailed information.) This forms a Galois category, and we define the absolute Galois group to be the fundamental group of this category, which is pro-represented by a universal pro-cover $\overline{S^3_\mathcal{L}}\to S^3$.

\medskip

We have the following equivalent characterizations of the absolute Galois group (Proposition \ref{prop:Galoisgroups}):

\begin{Def}\label{def:galois}
The \emph{absolute Galois group} of $(M, \mathcal{L}_M)$ is defined as 
\begin{equation}
    \Gal(M, \mathcal{L}_M) = \varprojlim_{h \in \Cov(M, \mathcal{L}_M)}\Gal(h)= \varprojlim_{L\subset \mathcal{L}} \widehat{\pi}_1(M\setminus L)
\end{equation}
where $L$ runs through finite sublink of $\mathcal{L}$.

\end{Def}

So, one way to think of the absolute Galois group of $(M, \mathcal{L}_M)$ is the inverse limit of the profinite completions of fundamental groups, where we delete more components of $\mathcal{L}_M$.

\begin{Def}\label{def:charpres}
Given two branched covers $h_1:M_1\rightarrow S^3$, $h_2:M_2\rightarrow S^3$ in $\Cov(S^3, \mathcal{L})$, we say a bijection $\sigma_\mathcal{L}:\mathcal{L}_{M_1}\rightarrow\mathcal{L}_{M_2}$  \emph{preserves characteristic} if $h_2(\sigma(K))=h_1(K)$ for all knots $K\in\mathcal{L}_{M_1}$. If a group isomorphism $\sigma: \Gal(M_1, \mathcal{L}_{M_1}) \to \Gal(M_2, \mathcal{L}_{M_2})$ induces a  bijection $\sigma_\mathcal{L}: \mathcal{L}_{M_1} \to \mathcal{L}_{M_2}$, we say that $\sigma$ \emph{preserves characteristic} if $\sigma_\mathcal{L}$  preserves characteristic. 
\end{Def}

We now state the main theorem.

\begin{mainthm}[Neukirch--Uchida theorem for 3-manifolds]\label{thm:main}
Let $\mathcal{L} \subset S^3$ be a stably Chebotarev link. Let $h_i: M_i \to S^3 \in \Cov(S^3, \mathcal{L})$, $i = 1, 2$ be two coverings of $S^3$ branched over finite sublinks $L_i \subset \mathcal{L}$. Then:
\begin{enumerate}
    \item Any isomorphism $\sigma: \Gal(M_1, \mathcal{L}_{M_1}) \to \Gal(M_2, \mathcal{L}_{M_2})$ induces a bijection $\sigma_*: \mathcal{L}_{M_1} \rightarrow \mathcal{L}_{M_2}$.
    \item For any isomorphism which preserves characteristic, there exists a unique $\alpha\in\Gal(S^3,\mathcal{L})$ giving an isomorphism of covers $\alpha: M_1 \to M_2$, which induces $\sigma$.
\end{enumerate} 
\end{mainthm}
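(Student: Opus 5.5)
The plan is to follow Neukirch's original strategy, with the arithmetic inputs replaced by their topological counterparts: Hilbert ramification theory for infinite covers, in the spirit of \cite{uekibranched}, and the stably Chebotarev property of $\mathcal{L}$ from \cite{ueki2021chebotarev}.

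For part (1), the first step is to give a purely group-theoretic characterization of the decomposition groups $D_{\widetilde K}\subset \Gal(M,\mathcal{L}_M)$ of the components of $\mathcal{L}_M$. Here $\widetilde K$ runs over lifts of $K$ to the universal pro-cover $\overline{S^3_\mathcal{L}}$. Each such group is the image of the peripheral subgroup $\widehat{\pi}_1(\partial N(K))\cong\widehat{\mathbb{Z}}^2$. I would establish three facts.
\begin{itemize}
\item[(a)] $D_{\widetilde K}\cong \widehat{\mathbb{Z}}^2$, generated by a meridian (inertia) and a longitude (Frobenius). Injectivity of the peripheral subgroup into the limit should follow from the Chebotarev property, which makes $\mathcal{L}$ dense enough.
\item[(b)] $D_{\widetilde K}\cap D_{\widetilde K'}=1$ for distinct lifts, and each $D_{\widetilde K}$ is self-normalizing.
\item[(c)] Every closed subgroup isomorphic to $\widehat{\mathbb{Z}}^2$, or at least every maximal such subgroup carrying a suitable local-duality or cohomological signature, is contained in a unique decomposition group.
\end{itemize}
These would play the roles of Neukirch's characterization of $\Gal(\overline{\mathbb{Q}}_p/\mathbb{Q}_p)$ inside $G_K$.

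For (b) and (c) I would argue through finite quotients. Suppose two lifts, or a $\widehat{\mathbb{Z}}^2$ not of peripheral type, gave nontrivial intersection or containment. Then some finite Galois cover in $\Cov(M,\mathcal{L}_M)$ would have too many split components, or would have no component realizing a given conjugacy class. This contradicts the Chebotarev density applied in that cover, which is exactly where the word "stably" is used. Once (c) holds, $\sigma$ maps the set of maximal $\widehat{\mathbb{Z}}^2$-type subgroups bijectively onto the corresponding set, compatibly with conjugation. Passing to $\Gal(M_i,\mathcal{L}_{M_i})$-conjugacy classes then gives the bijection $\sigma_*$. One also checks that $\sigma$ respects inertia subgroups, for instance as the subgroups generated by elements whose centralizer shape is determined, so that meridians go to meridians.

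For part (2), I would first apply $\sigma$ to open subgroups. For each open $U\subset\Gal(M_1)$ corresponding to a finite cover $M_1'$, the image $\sigma(U)$ corresponds to a cover $M_2'$. Counting components above each $h_1(K)$ and using preservation of characteristic, the sets of knots of $S^3$ that split completely in $M_1'$ and in $M_2'$ agree up to finitely many exceptions. By the Chebotarev-type Bauer theorem, which is again a consequence of the stably Chebotarev property, the Galois closures of $M_1'$ and $M_2'$ over $S^3$ coincide as subcovers of $\overline{S^3_\mathcal{L}}$.

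Making this compatible over all $U$ is where I would mimic Neukirch's passage to the Galois closure. Write $G=\Gal(S^3,\mathcal{L})$, so that $\Gal(M_i)$ are open in $G$. Characteristic preservation together with Bauer's theorem shows that $\sigma$ carries decomposition data to decomposition data over $S^3$. Combined with a rigidity statement that any automorphism of $G$ preserving decomposition groups of all $\widetilde K$ (characteristic-wise) is inner, this yields $\alpha\in G$ with $\sigma=\mathrm{conj}_\alpha$ on $\Gal(M_1)$ and $\alpha\Gal(M_1)\alpha^{-1}=\Gal(M_2)$. The element $\alpha$ then gives the homeomorphism $M_1\to M_2$ over $S^3$. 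Uniqueness follows because the centralizer of an open subgroup of $G$ is trivial, and this in turn follows from (b), since an element centralizing an open subgroup would have to lie in every decomposition group.

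I expect the main obstacle to be (c) together with the innerness step: showing that an abstract automorphism preserving local data is inner. In Uchida's argument this uses the solvable and abelian quotients and class field theory. I would try to substitute the idelic class field theory of \cite{niibo, uekiniibo} to reconstruct the maximal abelian covers and proceed inductively up the derived series.
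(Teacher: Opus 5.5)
\textbf{First gap: the local correspondence.} Your steps (b) and (c) are where the proposal has a genuine gap. Chebotarev density, applied in finite covers, only gives the Hasse-type statement: a class in $H^1(\Gal(M,\mathcal{L}_M),\mathbb{F}_p)$ that vanishes on almost all $D_{\overline{K}|K}$ is zero (Theorem~\ref{thm:maininjective}). It does not yield the contradictions you describe.
\begin{itemize}
\item Decomposition groups of distinct knots routinely intersect in finite quotients. In a cyclic cover, all unramified knots with the same Frobenius share a decomposition group.
\item The image of a $Z\cong\widehat{\mathbb{Z}}^2$ in a finite quotient is typically non-cyclic. So it can only lie in decomposition groups of the finitely many ramified knots, and density says nothing about those.
\end{itemize}
The missing ingredient is the Grunwald--Wang analogue. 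This is the surjectivity of $H^1(\Gal(M,\mathcal{L}_M),\mathbb{F}_p)\to\prod_{K\in L}H^1(D_{\overline{K}|K},\mathbb{F}_p)$ for finite $L$ (Theorem~\ref{thm:mainsurjective}), together with its version for closed subgroups. The paper does not get this from density. It uses Lefschetz duality on the exterior of a finite sublink with incompressible boundary (Lemma~\ref{lma:lefschetz}), assembles this into a restricted-product exact sequence (Theorem~\ref{thm:exact}), and combines it with injectivity via a snake-lemma argument.

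With both principles in hand, the proof is a dimension count.
\begin{itemize}
\item If $Z=D_{\overline{K_1}|K_1}\cap D_{\overline{K_2}|K_2}\neq 1$, both knots are totally indecomposable over the cover attached to $Z$. Then $H^1(Z,\mathbb{F}_p)$ surjects onto two copies of itself for every $p$, which forces $Z=1$.
\item For $Z\cong\widehat{\mathbb{Z}}^2$, injectivity over $\widetilde{M_Z}$ forces a nonzero local $H^1$ somewhere. Surjectivity over every finite-index subgroup of $Z$ (where $\dim_{\mathbb{F}_p}H^1=2$) then forces a single totally indecomposable knot, i.e.\ $Z\subset D_{\overline{K}|K}$.
\end{itemize}
In (a), too, density alone gives nothing. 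The paper uses a knot with nonzero linking number to make $\partial V_K$ incompressible, and then separability of the peripheral subgroup to get the induced profinite topology.

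\textbf{Second gap: innerness.} You leave this step as an unproven rigidity statement, to be attacked by idelic class field theory along the derived series. As stated this does not close, for three reasons. First, $\sigma$ is only defined on $U_1$, not on $G$. Second, a Neukirch-style solvable argument gives at most equality of Galois closures (your Bauer step, which is the paper's Theorem~\ref{thm:sigmaV}), not that $\sigma$ is a conjugation. Third, 3-manifold idele class groups are not class formations in general.

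The paper instead uses Uchida's algebraic device.
\begin{enumerate}
\item Take a finite normal cover $N$ dominating $M_1,M_2$ and a prime $p\equiv 1\pmod{|\Gal(h)|}$. The split embedding problem with kernel $\mathbb{F}_p[\Gal(h)]$ is solved properly (Lemma~\ref{lma:embdding}). This again rests on the surjectivity principle, now with $\mathbb{F}_p[\Gal(h)]$-coefficients via Shapiro, imposed at totally split knots.
\item Since $\sigma(V)=V$ for all normal $V\subset U_1\cap U_2$, $\sigma$ preserves each ideal $\mathbb{F}_p[\Gal(h)]\epsilon$.
\item Lemma~\ref{lma:uchida2} (cyclic subgroups go to conjugate cyclic subgroups) gives $\sigma_{\widetilde{N}}(1)=rg_0$.
\item The idempotents $\epsilon_i$ then yield $\sigma_N=\mathrm{conj}(g_0)$, and compactness produces $\alpha$.
\end{enumerate}
Your uniqueness argument via trivial centralizers of open subgroups is correct once (b) is established. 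By contrast, your claim that $\sigma$ carries inertia to inertia is unjustified, since nothing group-theoretic singles out a meridian inside $\widehat{\mathbb{Z}}^2$. It is also not needed.
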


From the definition of characteristic-preserving, since an isomorphism of branched covers preserves the link ordering, we obtain the following:

\begin{maincor}
Let $\mathcal{L} \subset S^3$ be a stably Chebotarev link, and let $h_i: M_i \to S^3 \in \Cov(S^3, \mathcal{L}), i = 1, 2$. If $\alpha: M_1 \to M_2$ is a isomorphism of covers over $S^3$, then the induced isomorphism $\alpha_*: \Gal(M_1, \mathcal{L}_{M_1}) \to \Gal(M_2, \mathcal{L}_{M_2})$ preserves characteristic, and $\alpha$ is the unique element of $\Gal(S^3, \mathcal{L})$ inducing $\alpha_*$. 
\end{maincor}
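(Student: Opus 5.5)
The corollary is the converse direction of Theorem~\ref{thm:main} and follows from it once we check that $\alpha_*$ preserves characteristic. The plan has three steps.

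\textbf{Step 1: set up $\alpha_*$.} Let $\alpha\colon M_1\to M_2$ be an isomorphism of covers over $S^3$, so $h_2\circ\alpha=h_1$.
\begin{itemize}
\item Since $\alpha$ commutes with the projections, it carries $\mathcal{L}_{M_1}=h_1^{-1}(\mathcal{L})$ homeomorphically onto $\mathcal{L}_{M_2}=h_2^{-1}(\mathcal{L})$.
\item For each finite sublink $L\subset\mathcal{L}$ it restricts to a homeomorphism $M_1\setminus h_1^{-1}(L)\to M_2\setminus h_2^{-1}(L)$.
\item Choosing a path between base points, this gives isomorphisms $\widehat{\pi}_1(M_1\setminus h_1^{-1}(L))\cong\widehat{\pi}_1(M_2\setminus h_2^{-1}(L))$.
\item These are compatible with the transition maps as $L$ grows, using a single base-point path chosen once for all $L$. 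By Definition~\ref{def:galois} they assemble into $\alpha_*\colon \Gal(M_1,\mathcal{L}_{M_1})\to\Gal(M_2,\mathcal{L}_{M_2})$.
\end{itemize}
Equivalently, pull back $\alpha$ through the universal pro-cover $\overline{S^3_\mathcal{L}}$. Then $\alpha_*$ is conjugation by a lift $\tilde\alpha\in\Gal(S^3,\mathcal{L})$, which is how $\alpha$ is regarded as an element of $\Gal(S^3,\mathcal{L})$, as in the theorem.

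\textbf{Step 2: $\alpha_*$ preserves characteristic.} By part (1) of Theorem~\ref{thm:main}, $\alpha_*$ induces a bijection $(\alpha_*)_\mathcal{L}$. We must identify it with the set map $K\mapsto\alpha(K)$.
\begin{itemize}
\item The bijection of part (1) is defined group-theoretically, by sending a knot $K$ to the conjugacy class of its decomposition (or inertia) subgroups in $\Gal(M_1,\mathcal{L}_{M_1})$.
\item Conjugating by $\tilde\alpha$ sends the decomposition groups of $K$, i.e.\ images of peripheral subgroups of boundary tori, to those of $\alpha(K)$. This is because $\alpha$ maps a tubular neighbourhood of $K$ to one of $\alpha(K)$.
\item Hence $(\alpha_*)_\mathcal{L}(K)=\alpha(K)$, and $h_2(\alpha(K))=h_1(K)$ by commutativity. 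This is exactly Definition~\ref{def:charpres}.
\end{itemize}

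\textbf{Step 3: uniqueness.} Apply part (2) of Theorem~\ref{thm:main} to the characteristic-preserving isomorphism $\sigma=\alpha_*$. It gives a unique $\beta\in\Gal(S^3,\mathcal{L})$ inducing $\alpha_*$. Our $\alpha$, viewed as $\tilde\alpha$, is such an element, so $\beta=\alpha$.

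The main obstacle is Step 2: making the identification of $(\alpha_*)_\mathcal{L}$ precise. This depends on the exact construction of $\sigma_*$ in part (1), presumably the detection of decomposition groups as maximal subgroups of local type (a local correspondence principle). Functoriality of that characterization under homeomorphism-induced isomorphisms is what has to be verified. The remaining steps are formal.
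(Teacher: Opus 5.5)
Your proposal is correct and follows the same route as the paper. The paper deduces characteristic-preservation directly from $h_2\circ\alpha=h_1$ and reads off uniqueness from part (2) of Theorem~\ref{thm:main}. The step you flag as the main obstacle is in fact immediate: conjugation by $\tilde\alpha$ carries $D_{\overline{K}|K}$ onto $D_{\tilde\alpha(\overline{K})|\alpha(K)}$, so the uniqueness clause of Theorem~\ref{thm:second}(1) forces $(\alpha_*)_{\mathcal{L}}(K)=\alpha(K)$, with no further functoriality check needed.
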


Thus, Theorem \ref{thm:main} is a genuine equivalence: a characteristic-preserving isomorphism of Galois groups exists if and only if a homeomorphism of covers exists. For instance, any characteristic-preserving automorphism of $\Gal(S^3, \mathcal{L})$ is inner. 

\medskip

This paper also develops local-global principles for the profinite Galois cohomology of 3-manifolds. In the classical Neukirch--Uchida theorem, the analogous principles are a Hasse principle in $H^1$ and the Grunwald--Wang theorem, both of which are deep results in algebraic number theory. Our topological analogues take the following form. 

\begin{mainthm}\label{thm:maininjective}
Let $h: M \to S^3 \in \Cov(S^3, \mathcal{L})$. Suppose we have a subset $\mathcal{L}' \subset \mathcal{L}_M$ which contains all but finitely many knot components. Let $p$ be a prime number. Then the natural restriction map
\begin{equation}
    \phi^1: H^1(\Gal(M, \mathcal{L}_M), \mathbb{F}_p) \to \prod_{K \in \mathcal{L}'}H^1(D_{\overline{K}|K}, \mathbb{F}_p)
\end{equation}
is injective.
\end{mainthm}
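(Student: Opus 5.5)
Since $\mathbb{F}_p$ is a trivial module and $\Gal(M,\mathcal{L}_M)$ is the inverse limit of the profinite groups $\widehat{\pi}_1(M\setminus L)$, I will use
\[
H^1(\Gal(M,\mathcal{L}_M),\mathbb{F}_p)=\mathrm{Hom}_{\mathrm{cont}}(\Gal(M,\mathcal{L}_M),\mathbb{F}_p).
\]
A class is then a continuous homomorphism $f$ to $\mathbb{F}_p$. Each such $f$ factors through $\widehat{\pi}_1(M\setminus L)$ for some finite sublink $L\subset\mathcal{L}_M$, since continuous cohomology commutes with the inverse limit. If $f\neq 0$, its kernel defines a connected cyclic $\mathbb{Z}/p$ branched cover $g:N\to M$ in $\Cov(M,\mathcal{L}_M)$, branched only over components of $L$.

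The restriction of $f$ to a decomposition group $D_{\overline{K}|K}$ measures how $K$ behaves in $g$. The decomposition group of $K$ in $\Gal(N/M)\cong\mathbb{Z}/p$ is the image of $D_{\overline{K}|K}$, and it is trivial exactly when $K$ splits completely in $N$. For $K\notin L$ it is generated by the Frobenius (the class of a meridian-free lift, i.e. the monodromy of $K$ itself). So $f|_{D_{\overline{K}|K}}=0$ iff $K$ splits completely in $N$. If $\phi^1(f)=0$, every $K\in\mathcal{L}'$ splits completely in $g$, so all but finitely many components of $\mathcal{L}_M$ split completely.

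The contradiction will come from the Chebotarev property. Because $\mathcal{L}$ is stably Chebotarev, $\mathcal{L}_M$ is Chebotarev in $M$. Hence for the Galois cover $N\to M$ with group $\mathbb{Z}/p$ and a generator $1\neq c\in\mathbb{Z}/p$, the components whose Frobenius is $c$ have natural density $1/p>0$ in the fixed ordering. In particular there are infinitely many such components. Removing the finitely many knots of $L$ and of $\mathcal{L}_M\setminus\mathcal{L}'$ does not change this density, so some $K\in\mathcal{L}'$ is unbranched with nontrivial Frobenius. Then $f|_{D_{\overline{K}|K}}\neq 0$, a contradiction, and $\phi^1$ is injective.

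The main obstacle is making precise the dictionary between the restriction to $D_{\overline{K}|K}\subset\Gal(M,\mathcal{L}_M)$ and the Frobenius of $K$ in the finite cover $N/M$. Here $D_{\overline{K}|K}$ is defined via a choice of a knot $\overline{K}$ in the universal pro-cover over $K$, presumably using the Hilbert ramification theory for infinite covers recalled in Section \ref{sec:background}. I would first check that the image of $D_{\overline{K}|K}$ in any finite quotient $\Gal(h)$ is the finite decomposition group of the corresponding component upstairs, and is well defined up to conjugacy. Since $\mathbb{F}_p$ is abelian, conjugacy is harmless.

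I would also check that the density statement is applied to $\mathcal{L}_M$ with the induced ordering, which is exactly what the stably Chebotarev hypothesis guarantees.
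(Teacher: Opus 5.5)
Your proposal is correct and takes essentially the same route as the paper: a nonzero class in $\ker(\phi^1)$ gives a cyclic degree-$p$ cover of $M$ in which all but finitely many knots of $\mathcal{L}_M$ are totally split, and this contradicts the Chebotarev property of $\mathcal{L}_M$, which the stably Chebotarev hypothesis guarantees. The only difference is phrasing: the paper gets the contradiction from the natural density $1/p$ of totally split knots (Lemma \ref{lma:densitytotallysplit} with Remark \ref{rmk:stable}), while you use the complementary positive density of knots whose Frobenius is a nontrivial element $c$.
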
 

\begin{mainthm}\label{thm:mainsurjective}
Let $h: M \to S^3 \in \Cov(S^3, \mathcal{L})$. Suppose we have a finite set of knots $L \subset \mathcal{L}_M$. Then the natural restriction map
\begin{equation}
   \psi_1: H^1(\Gal(M, \mathcal{L}_M), \mathbb{F}_p) \twoheadrightarrow \prod_{K \in L}H^1(D_{\overline{K}|K}, \mathbb{F}_p)
\end{equation}
is surjective.
\end{mainthm}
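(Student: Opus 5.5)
The plan is to reduce to finite level, interpret cohomology classes as cyclic covers, and realize the prescribed local data by a single global class.

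\textbf{Reduction to finite level.} Since $\Gal(M,\mathcal{L}_M)=\varprojlim_{L'}\widehat{\pi}_1(M\setminus L')$ over finite sublinks $L'$, and $\mathbb{F}_p$ is a discrete module, we have
\[
H^1(\Gal(M,\mathcal{L}_M),\mathbb{F}_p)=\varinjlim_{L'}H^1(\widehat{\pi}_1(M\setminus L'),\mathbb{F}_p)=\varinjlim_{L'}\mathrm{Hom}(H_1(M\setminus L'),\mathbb{F}_p).
\]
A class is therefore a homomorphism $\chi\colon H_1(M\setminus L')\to\mathbb{F}_p$ for some finite $L'$.

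\textbf{Local groups and restriction.} For a knot $K$, the decomposition group $D_{\overline K|K}$ is the inverse limit of the local groups $\widehat{\pi}_1(\partial N(K))\cong\widehat{\mathbb{Z}}^2$ at each finite level, generated by a meridian $\mu_K$ and a longitude $\lambda_K$. Here I use that $K$ is not among the other removed knots, which are at positive distance from $K$. Hence
\[
H^1(D_{\overline K|K},\mathbb{F}_p)=\mathrm{Hom}(\langle\mu_K,\lambda_K\rangle,\mathbb{F}_p)\cong\mathbb{F}_p^2.
\]
The map $\psi_1$ sends $\chi$ to the tuple $(\chi(\mu_K),\chi(\lambda_K))_{K\in L}$, where we enlarge $L'\supseteq L$ as needed.

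\textbf{Realizing the prescribed values.} Fix target values $(a_K,b_K)\in\mathbb{F}_p^2$ for $K\in L$. We must find a finite $L'\supseteq L$ and $\chi\in H^1(M\setminus L';\mathbb{F}_p)$ with $\chi(\mu_K)=a_K$ and $\chi(\lambda_K)=b_K$.

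\emph{Meridians.} By Alexander duality, the meridians of $L'$ span a part of $H_1(M\setminus L')$ whose relations come from $H_2(M)$, i.e.\ from the torsion and rank of $H_1(M)$. Adding extra knots from $\mathcal{L}_M$ gives freedom to cancel these relations.

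\emph{Longitudes.} A longitude $\lambda_K$ is homologous in $M\setminus L'$ to $\sum_{J\in L'\setminus K}\mathrm{lk}(K,J)\,\mu_J$ plus a class coming from $H_1(M)$; the linking pairing may be rational when $M$ is not a homology sphere.

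\emph{Choosing auxiliary knots.} The plan is to find, for each $K\in L$, an auxiliary knot $J_K\in\mathcal{L}_M\setminus L$ that adjusts the value on $\lambda_K$ alone. I would use the Chebotarev property for this. Take the finite abelian cover $N\to M$ branched along $L$ (enlarged to kill $H_1(M)\otimes\mathbb{F}_p$) whose Galois group is $H_1(M\setminus L)\otimes\mathbb{F}_p$ or a suitable ray-class analogue. Stable Chebotarev density then says every element of this Galois group occurs as the Frobenius (monodromy of the meridian's dual, i.e.\ the class of $J$ in $H_1(M\setminus L)$) of infinitely many $J\in\mathcal{L}_M$. This lets me choose $J$ whose homology class in $H_1(M\setminus L)\otimes\mathbb{F}_p$ is any prescribed element. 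Its linking data with each $K$ are then prescribed mod $p$, so $\mathrm{lk}(K,J)$ can be made $\delta_{K,K_0}$ in $\mathbb{F}_p$, and the class of $J$ in $H_1(M)$ can be made trivial mod $p$.

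\emph{Solving.} With $L'=L\cup\{J_K\}$, define $\chi$ with $\chi(\mu_K)=a_K$ and with $\chi(\mu_{J_K})$ chosen so that $\chi(\lambda_K)=b_K$. This is a linear system that is triangular thanks to the chosen linking numbers. One more Chebotarev knot $J_0$ is added to absorb the sum relation on meridians coming from $\partial$(a Seifert-type surface), i.e.\ the global relation forced by $H_2$.

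\textbf{Main obstacle.} The hard step is making the Chebotarev choice precise. The Frobenius of $J$ in an abelian cover branched over $L$ must equal the class $[J]\in H_1(M\setminus L)$. Moreover, the $p$-part of $H_1(M)$ (torsion linking form) must be handled so that the meridian relations impose no obstruction. I would first treat the case $H_1(M;\mathbb{F}_p)=0$, where the relation is simply that each meridian is free. I would then reduce the general case by passing to the image and using extra Chebotarev knots representing a basis of $H_1(M;\mathbb{F}_p)$ to trivialize it.
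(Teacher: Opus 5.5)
Your route differs from the paper's. The paper obtains surjectivity formally: it combines the exact sequence $H^1\to P^1\to (H^1)^\vee$ of Theorem~\ref{thm:exact} with the injectivity of Lemma~\ref{lma:injective} on the cofinite set $\mathcal{L}_M\setminus L$, via a snake-lemma and Pontryagin-duality argument (the analogue of deducing Grunwald--Wang from Poitou--Tate plus the Hasse principle). You instead build the global class by hand at one finite level, choosing auxiliary knots by Chebotarev in the finite abelian quotient $H_1(M\setminus L;\mathbb{F}_p)$. That idea is sound. Your reduction to finite level and your Chebotarev step are fine; for abelian $\rho$ the value $\rho([J])$ is just $[J]$, so the Frobenius issue you flag is not an obstacle. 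If $H_1(M;\mathbb{F}_p)=0$ your argument is essentially complete: $H_1(M\setminus L';\mathbb{F}_p)$ is free on the meridians, mod-$p$ linking numbers exist, and taking $J_K$ with $[J_K]=[\mu_K]$ in $H_1(M\setminus L;\mathbb{F}_p)$ makes the system diagonal.

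The gap is the general case, which the statement covers (already a double cover branched over a two-component sublink has $H_1(M;\mathbb{F}_2)\neq 0$). Suppose $[K]\neq 0$ in $H_1(M;\mathbb{F}_p)$. Then there is no mod-$p$ linking number $\lk(K,J)$, and $\lambda_K$ is not in the span of the meridians. Moreover, a class $\chi$ with prescribed meridian values is determined only up to $H^1(M;\mathbb{F}_p)$, which can shift $\chi(\lambda_K)$. So your assertion that $\chi(\mu_{J_K})$ moves only $\chi(\lambda_K)$ (the ``triangular system'') is unproved, and the final reduction (``trivialize $H_1(M;\mathbb{F}_p)$'') is not yet an argument.

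The missing ingredient is an obstruction criterion that replaces linking numbers, coming from the same Lefschetz duality as in Lemma~\ref{lma:lefschetz}. Put $X=M\setminus\operatorname{int}V_L$ and $X'=X\setminus\operatorname{int}V_{J_1\cup\dots\cup J_m}$. Use the sequence of the pair $(X',\partial X)$, the isomorphisms $H^2(X,\partial X;\mathbb{F}_p)\cong H_1(X;\mathbb{F}_p)$ and $H^2(X',\partial X;\mathbb{F}_p)\cong H_1(X,V_{J_1\cup\dots\cup J_m};\mathbb{F}_p)$ (Lefschetz duality plus excision), and the identification of $\delta$ with $i_*\circ\mathrm{PD}$. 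These show that $\phi\in H^1(\partial X;\mathbb{F}_p)$ extends to $X'$ if and only if $i_*(\mathrm{PD}\,\phi)\in\operatorname{span}\{[J_1],\dots,[J_m]\}\subset H_1(X;\mathbb{F}_p)$. On the boundary tori, $\mathrm{PD}(\mu_K^*)=\pm\lambda_K$ and $\mathrm{PD}(\lambda_K^*)=\pm\mu_K$.

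This vindicates your choices. Taking $[J_K]=[\mu_K]$ realizes the pure longitude classes. A $J_0$ with $[J_0]=-\sum a_K[K]$ in $H_1(M;\mathbb{F}_p)$ handles the meridian data, since the remaining discrepancy lies in the meridian span. More simply, choose by Chebotarev knots $J_i$ whose classes span $H_1(M\setminus L;\mathbb{F}_p)$; then every $\phi$ extends, uniformly in $M$. Repaired this way, your proof is more elementary than the paper's: it needs only finite-level duality and one application of the Chebotarev property, bypassing Theorem~\ref{thm:exact} and Lemma~\ref{lma:injective}.
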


The statement and proof of the main results highlight and motivate further properties of countable links which allow them to behave more analogously to the prime numbers. We expect these properties to motivate new directions of research in arithmetic topology. 

\begin{question}
Does there exist a stably Chebotarev link $\mathcal{L} \subset S^3$ which satisfies the following properties?
\begin{itemize}
    \item Every isomorphism between open finite-index subgroups of the absolute Galois group preserve characteristic. 
    \item Any open subgroup $H$ of the absolute Galois group satisfies a Poitou--Tate duality for any finitely generated module $A$ with $H$-action. 
\end{itemize}
\end{question}

We conjecture that the planetary link of the figure-eight knot satisfies both of these properties. A more detailed discussion can be found in Section \ref{sec:future}.

\subsection{Relation to other works}

\subsubsection{Arithmetic Topology}

In \cite{mihara}, \cite{ueki2021chebotarev}, versions of the idele class group have been studied (from the cohomological and homological points of view, respectively) to establish a class field theory for 3-manifolds. In number-theoretic setups of local-global principles and Poitou--Tate duality, the idele class group is a fundamental object. We expect the framework of these works to interact naturally with the notions introduced in this paper. 

\medskip

Another direction is the analogue of Iwasawa theory for 3-manifolds \cite{hmm} \cite{uekiiwasawa} \cite{uekikida}. In our paper, we consider general pro-covers, while Iwasawa theory considers specific towers of $\mathbb{Z}/p^r\mathbb{Z}$-covers (a $\mathbb{Z}_p$ pro-cover) over certain links. 

\subsubsection{Profinite rigidity}\label{sec:profinite}

We say that groups are \emph{profinitely rigid} if they are distinguished by their profinite completions among certain families of residually finite groups. Profinite rigidity, particularly when restricting to the class of fundamental groups of low-dimensional orbifolds, is a central topic of research in group theory and topology; see \cite{reid} for a survey on this topic. Since the Galois groups developed in this paper can be expressed as an inverse limit of profinite completions of 3-manifolds with toral boundary, the results in this paper have a natural relationship with profinite rigidity of 3-manifold groups. In this paper, we provide a profinite group invariant which can distinguish 3-manifolds as branched covers of $S^3$. Although the invariant provided comes from profinite completions of fundamental groups, the extra price paid is to allowing ramification at a countable collection of knots. 

\subsubsection{Anabelian geometry}

Anabelian geometry studies how much information can be recovered about geometry and arithmetic from the Galois theory and the \'etale fundamental group. The field is based on Grothendieck's anabelian conjectures for curves \cite{grothendieck}. The term anabelian comes from "beyond abelian" or ``far from abelian", since one needs the fundamental groups to be "far enough from being abelian" to be able to have enough information on the geometry. The Neukirch--Uchida theorem serves as the foundational anabelian result dealing with number fields, i.e. arithmetic objects of dimension 0. We wish to highlight a few central theorems in anabelian geometry, and explain what the analogue statement for manifold should be based on the current arithmetic topology dictionary. For information on further themes and topics in anabelian geometry we refer to the surveys \cite{pop2011lectures}, \cite{mnt}. 

\medskip

We mention a few anabelian results for number fields that generalise and refine the Neukirch--Uchida theorem. In \cite{ivanov} and \cite{shimizu}, a refinement of the Neukirch--Uchida theorem was given, where instead of working with the set of all primes, sets $S$ with positive Dirichlet density (along with some other mild conditions) are considered. These works were a big part of the inspiration of this work, and led us to believe that stably Chebotarev links could satisfy a Neukirch--Uchida theorem. In particular, we wish to draw attention to the \emph{characteristic preserving condition in} \cite{shimizu} (which he refers to as ``good"), which led us to consider the characteristic preserving in our setup. Another work which led us to focus on characteristic preserving isomorphisms is \cite{hoshi}, which deals with whether homomorphisms between the multiplicative group of fields $K^\times$ arise from field isomorphisms, via a \emph{characteristic compatible criteria}.

\medskip

We also wish to mention a recent preprint \cite{karshon2026pro} which proves a \emph{tame} Neukirch--Uchida theorem. In this version, the decomposition groups considered are more analogous to those in our paper. Since it is recent, the precise analogues of the ideas there are not fully explored in this paper, though they did inspire some of the discussions in Section \ref{sec:future}.

\medskip

Next, considering global fields in characteristic $p$, one has a similar result given by Uchida in \cite{uchida1977isomorphisms}. This approach was tremendously improved in the work of Tamagawa \cite{tamagawa}, where an anabelian result for affine curve $X$ over finite fields was given. This was done by identifying the decomposition groups of the points of $X$, but now from the much simpler group $\pi_1^{\et}(X)$. For such curves one has the following exact sequence:
\[1 \to \pi_1^{\et}(X_{\overline{k}})\to \pi_1^{\et}(X) \to G_k \to 1\]
Under the arithmetic topology dictionary, such curves will correspond to hyperbolic mapping torus (a surface bundle over a circle). An analogous result for such manifolds will only require information from decomposition groups of a finite collection of knots in $\mathcal{L}$, which would be a profinite rigidity result for $\pi_1(M\setminus L)$ in the sense of the wider program discussed in the previous section.

\medskip

Finally, for local $p$-adic fields $F$, one has the result of Mochizuki \cite{mochizuki1997version}, that there is a bijection
\[\mathrm{Aut}_{\mathbb{Q}_{p}}(F)\longleftrightarrow \mathrm{Out}_{\mathrm{Filt}}(G_{F}).\]
Note that the above needs a special condition on the automorphisms to preserve a group filtration (the ramification filtration) the full group $\text{Out}(G_F)$ is still not well understood.
Although we treat here the local Galois groups as $\pi_1(\partial V_K)$, these are analogues just the tamely ramified part.
In \cite{gropper2023surfaces}, an arithmetic topology picture that includes the wild ramification in the local set up was developed, and the group $\text{Out}(G_F)$ was related to the mapping class group.
Incorporating such a picture also in the global set up will allow for more complicated decomposition groups and potentially deeper rigidity results.

\subsection{Summary of analogies of the paper}

We provide the following table for the reader's convenience, particularly with regard to the number-theoretic analogies this paper uses and proposes. Here we summarize the analogies in the paper with the below table.

{\centering \small
\renewcommand{\arraystretch}{1}
\begin{longtable}{@{} l l l @{}}

\toprule
\textbf{Number theory} & \textbf{3-Manifold topology} & \textbf{Reference} \\
\midrule
Set of all primes in $\mathbb{Z}$ & Stably Chebotarev link $\mathcal{L} \subset S^3$ & Def. \ref{def:chebo}\\
$\operatorname{Spec}(\mathcal{O}_F)$ & Compact connected 3-manifold $M$ & Sec. \ref{sec:background} \\
Prime ideal $\mathfrak{p}$ & Knot $K \in \mathcal{L}_M$ & \S\ref{sec:background} \\
Algebraic closure $\overline{F}$ & Universal $\mathcal{L}_M$-branched cover $\overline{M_{\mathcal{L}_M}}$ & Def. \ref{def:universal} \\
Absolute Galois group $\operatorname{Gal}(\overline{F}/F)$ & $\operatorname{Gal}(M, \mathcal{L}_M) = \varprojlim \widehat{\pi}_1(M \setminus L)$ & Def. \ref{def:galois} \\
Decomposition group $D_{\overline{\mathfrak{p}}|\mathfrak{p}}$ & $D_{\overline{K}|K} \cong \widehat{\mathbb{Z}}^2$ & Def. \ref{def:decomposition}, Thm. \ref{thm:profinitetorus} \\
Inertia group $I_{\overline{\mathfrak{p}}|\mathfrak{p}}$ & $I_{\overline{K}|K} = \langle \mu \rangle$ (meridional) & Def. \ref{def:decomposition} \\
Frobenius element $\operatorname{Frob}_{\mathfrak{p}}$ & Longitude class $\lambda$ & Def. \ref{def:frob} \\
Wild inertia $P_{\mathfrak{p}}$ & (Absent) & Sec. \ref{sec:charpres} \\
\midrule
Totally split prime & Totally split knot: $|h^{-1}(K)| = \deg(h)$ & Def. \ref{def:totsplit} \\
Chebotarev density theorem & Chebotarev link property & Def. \ref{def:chebo} \\
Norm $N(\mathfrak{p})$ & $N(K) = e^{\ell(K)}$ & Def. \ref{def:norm} \\
Dirichlet density & Dirichlet density $\delta(\mathcal{L}', \mathcal{L}_M)$ & Def. \ref{def:dirichlet} \\
\midrule
Hasse principle (in $H^1$) & Injection in $H^1$: Lemma \ref{lma:injective} & Sec. \ref{sec:Local-Global} \\
Grunwald--Wang theorem & Surjection in $H^1$: Lemma \ref{lma:surjective} & Sec \ref{sec:Local-Global} \\
Poitou--Tate duality & Exact sequence via Lefschetz duality: Thm. \ref{thm:exact} & Sec. \ref{sec:Local-Global} \\
Embedding problem ($\mathbb{F}_p[G]$-extensions) & Lifting property: Lemma \ref{lma:embdding} & Sec. \ref{sec:Local-Global} \\
\midrule
Local correspondence of primes & $\sigma(D_{\overline{K_1}|K}) = D_{\overline{K_2}|K}$: Thm. \ref{thm:second} & Sec. \ref{sec:localcorr} \\
Neukirch--Uchida theorem & Thm. \ref{thm:main} (characteristic-preserving) & Sec. \ref{sec:proof} \\
Tame ramification & Branched covers; $D_{\overline{K}|K} \cong \widehat{\mathbb{Z}}^2$ universally & Sec. \ref{sec:charpres} \\
Residue characteristic of $\mathfrak{p}$ & Projection $h(K) \in \mathcal{L} \subset S^3$ & Def. \ref{def:charpres} \\
\bottomrule
\label{tab:dictionary}
\end{longtable}
}

The left column lists number-theoretic objects and results; the middle column gives their 3-manifold analogues as developed here; the right column points to the relevant definition or result in this paper. The horizontal rules separate, from top to bottom: basic objects, density notions, cohomological local-global principles, and the main results. The absence of a topological analogue of wild inertia is the structural reason for the characteristic-preserving hypothesis (\S 1.2).

\subsection{Outline of the paper}

In Section \ref{sec:background}, we present necessary background material for this paper, including facts about the cohomology of profinite groups, Hilbert ramification theory, and Chebotarev links. Furthermore, we present an overview of our proof strategy and draw a parallel to the number-theoretic Neukirch--Uchida theorem. Section \ref{sec:abs} defines and proves basic properties absolute Galois groups and absolute decomposition groups. In Section \ref{sec:densities}, we introduce zeta-functions for Chebotarev links, which we use in Section \ref{sec:totsplit} to show that two normal covers with the same set of totally split knots in $S^3$ coincide. In Section \ref{sec:Local-Global} we prove Theorems \ref{thm:maininjective} and \ref{thm:mainsurjective} as well as their consequences.; these are used extensively in Section \ref{sec:localcorr} to show that subgroups which ``look like" decomposition groups must be decomposition groups. In Section \ref{sec:proof}, we construct a common Galois cover and use embedding problems to prove the main theorem. Section \ref{sec:future} discusses further questions and directions with regards to links behaving like the primes. 

\subsection*{Acknowledgements}
We wish to thank Ted Chinburg, Minhyong Kim, Masanori Morishita, Hiroaki Namakura, Tony Pantev, Florian Pop, Akio Tamagawa, and Go Yamashita for helpful conversations related to the ideas in this paper. The first author was partially supported  by Pantev’s NSF award DMS-2200914 and by Samsung Science and Technology Foundation under Project Number SSTF-BA2201-03. The second author is partially supported by JSPS KAKENHI Grant Number JP23K12969. The third author thanks Ted Chinburg for his mentorship and for introducing the author to arithmetic topology. We also wish to thank and the Simons foundation, the RIMS Joint Research Activities and the ICMS for funding travels that further facilitated this collaboration and allowed the authors to have research meetings. 

\tableofcontents

\section{Preliminaries and Background}  
\label{sec:background}

\subsection{Cohomology of profinite groups}

In this section, we recall some facts about the cohomology of profinite groups. For more about this, see Chapter 1 of \cite{neukirch2013cohomology}. Let $A$ be a Hausdorff, abelian, and locally compact topological group. 

\begin{Def}
The \emph{Pontryagin dual}, denoted $A^\vee$, is the space of continuous homomorphisms $A \to \mathbb{R}/\mathbb{Z}$.
\end{Def}

If we have an exact sequence 
\begin{equation}
    0\to A\to B\to C\to 0
\end{equation}
we have another exact sequence
\begin{equation}\label{eq:pontDual}
    0\to C^\vee\to B^\vee\to A^\vee\to 0
\end{equation}

\begin{Def}
Let $(X_i)_{i\in I}$ be a family of abelian topological groups, and $Y_i\subseteq X_i$ a collection of open subgroups. The \emph{restricted product} $\Rprod(X_i,Y_i)$ is the subgroup of $\prod X_i$ for which $x_i\in Y_i$ for almost all $i$.
\end{Def}

When $X_i$ are locally compact abelian, and almost all $Y_i$ are compact open, 
\begin{equation}\label{eq:resprodDuality}
    \left(\Rprod(X_i,Y_i)\right)^\vee \cong \Rprod(X_i^\vee,(X_i/Y_i)^\vee)
\end{equation}

If $G=\underset{G_i}{\varprojlim}$ and $A$ is a finite abelian group with trivial $G$-action. Then
\begin{equation}\label{eq:limcohom}
    H^1(G, A) \cong \varinjlim_{G_i} H^1(G_i, A)
\end{equation}

For a finite module $A$, we have functorial isomorphisms 
\begin{equation}
    H_i(G,A)^\vee\cong H^i(G,A^\vee)
\end{equation}

\begin{Def}
A group $G$ is \emph{good} if the induced homomorphisms $H^n(\widehat{G}, M) \to H^n(G, M)$ are isomorphisms for any finite $G$-module $M$.
\end{Def}

It is known that 3-manifold groups with toral boundary are cohomologically good; see Theorem 3.5.1 in \cite{3groups}.

\subsection{Hilbert ramification theory}

We now discuss the analogue of Hilbert ramification theory coming from \cite{morishita2012knots} chapter 5. For background on classical number-theoretic decomposition theory, see Appendix Section 2 of \cite{washington}. 

\medskip

Let $M$ be an oriented connected closed 3-manifold endowed with a base point $b_M$, $h: (N, b_N) \to (M, b_M)$ be a finite branched Galois cover with its branch locus being a finite link $L$, $Y = N \setminus h^{-1}(L)$, $X = M \setminus L$. In an abuse of notation, let $h: (Y, b_Y) \to (X, b_X)$ also denote the restriction to the exterior of $h^{-1}(L)$. 

\medskip

Then we have $\Gal(h) = \text{Deck}h \cong \pi_1(X, b_X) / h_*(\pi_1(Y, b_Y))$. When $h$ is not Galois, we denote by $\Gal(h)$ the Galois group of the minimal Galois cover in $\Cov(M, \mathcal{L})$ that factors through $h$. 

\begin{Def}
Given two covers $h_1:M_1\rightarrow M$ and $h_2:M_2\rightarrow M$ of $M$ branched at finite sublinks of $\mathcal{L}$, there exists a covering containing them both which we call the \emph{compositum of $M_1$ and $M_2$} and denote by $M_{12}$. Let $L_1, L_2$ be the finite branching links in $h_1$ and $h_2$ respectively, $L = L_1 \cup L_2$, and $H_1, H_2$ be the subgroups of $\pi_1(M \setminus L)$ corresponding to the complements of $h_1^{-1}(L)$ and $h_2^{-1}(L)$, both of which are finite covers of $M \setminus L$. Define $\mathcal{M}_{12}$ to be the finite-index cover of $M \setminus L$ corresponding to $H_1 \cap H_2$, and let $M_{12}$ be the Fox completion of the corresponding covering map. (The \emph{Fox completion} of a covering map $Y \to X = M \setminus L$ is the unique extension to a branched cover $N \to M$; see Section 2 of \cite{morishita2012knots}.) 
\end{Def}

\begin{remark}
In the arithmetic topology dictionary, branched covers correspond to number field extensions with ramification, while the Fox completion corresponds to the compositum of number fields.
\end{remark}

\begin{prop}
If $M_1\rightarrow M$ is a normal cover, then $M_{12} \to  M_2$ is a normal cover. 
\end{prop}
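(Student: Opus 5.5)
We reduce the statement to group theory in $\pi_1(M\setminus L)$, where $L=L_1\cup L_2$ as in the definition of the compositum. Write $G=\pi_1(M\setminus L,b_X)$ and let $H_1,H_2\subset G$ be the finite-index subgroups corresponding to the unbranched covers $M_1\setminus h_1^{-1}(L)\to M\setminus L$ and $M_2\setminus h_2^{-1}(L)\to M\setminus L$. Since $M_1\to M$ is normal (Galois), the restriction of its deck group action to the complement of $h_1^{-1}(L)$ shows that $M_1\setminus h_1^{-1}(L)\to M\setminus L$ is a regular covering, so $H_1\trianglelefteq G$.

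The key step is then the elementary observation that $H_1\cap H_2$ is normal in $H_2$. Indeed, for $g\in H_2$ we have $g(H_1\cap H_2)g^{-1}=gH_1g^{-1}\cap gH_2g^{-1}=H_1\cap H_2$, using normality of $H_1$ in $G$ and $g\in H_2$. Moreover, $H_2/(H_1\cap H_2)\cong H_1H_2/H_1$ injects into $G/H_1=\Gal(h_1)$. Hence the unbranched cover $\mathcal{M}_{12}\to M_2\setminus h_2^{-1}(L)$, corresponding to the inclusion $H_1\cap H_2\subset H_2$, is a regular covering. Its deck group is isomorphic to a subgroup of $\Gal(h_1)$ (the analogue of $\Gal(K_1K_2/K_2)\hookrightarrow \Gal(K_1/K)$).

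Finally, we pass to the branched covers via Fox completion. The map $M_{12}\to M_2$ is the Fox completion of $\mathcal{M}_{12}\to M_2\setminus h_2^{-1}(L)$; this uses uniqueness of Fox completions, since $M_{12}\to M$ factors through $M_2$ as the completion of the composite. Each deck transformation of $\mathcal{M}_{12}\to M_2\setminus h_2^{-1}(L)$ extends uniquely to a homeomorphism of $M_{12}$ over $M_2$, by the uniqueness/functoriality of the Fox completion (Section 2 of \cite{morishita2012knots}). The extended group acts simply transitively on generic fibers, and $M_{12}/\mathrm{Deck}=M_2$ because the quotient agrees with $M_2$ off the branch locus and both are Fox completions of the same thing. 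So $M_{12}\to M_2$ is normal.

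The main obstacle is this last step: carefully justifying that the deck transformations extend over the branch link and that the factorization $M_{12}\to M_2\to M$ indeed comes from Fox completion. One must also take some care with base points when $H_2$ is only defined up to conjugacy; fixing base points compatibly in the definition resolves this.
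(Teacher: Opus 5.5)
Your proof is correct and follows the same route as the paper: $H_1$ is normal in $G=\pi_1(M\setminus L)$, so $H_1\cap H_2$ is normal in $H_2$, and the deck group is $H_2/(H_1\cap H_2)\cong H_1H_2/H_1$, a subgroup of $\Gal(h_1)$. Your extra step, extending the deck transformations through the Fox completion to get normality of the branched cover $M_{12}\to M_2$, supplies detail that the paper leaves implicit.
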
 

\begin{proof}
Let $G=\pi_1(M\setminus L)$.
Since $h_1$ is normal, $H_1$ is normal in $G$. Hence $H_1 \cap H_2$ is normal in $H_2$, so $M_{12} \to M_2$ is a normal covering with deck group $H_2/(H_1 \cap H_2) \cong H_1H_2/H_1$, a subgroup of $\Gal(h_1)$. 
\end{proof}
\begin{Def}
Let $K$ be a knot in $M \setminus L$, and let $K'$ be a component of $h^{-1}(K)$. The \emph{decomposition group} $D_{K'|K}$ of $K'$ over $K$ is defined as 
\begin{equation}
    D_{K'|K} = \{g \in \Gal(h) \mid g(K') = K'\}
\end{equation}
while the \emph{inertia group} $I_{K'|K}$ is 
\begin{equation}
    I_{K'|K} = \ker(D_{K'|K} \twoheadrightarrow \Gal(h|_{K'}))
\end{equation}
\end{Def}

Let $V_K$ be a tubular neighbourhood of $K$. If $\mu$ and $\lambda$ denote the meridian and a chosen longitude of $K$, then we have \[\pi_1(\partial V_K) = \langle \mu, \lambda \mid [\mu, \lambda] \rangle \cong \mathbb{Z}^2.\]
The decomposition group $D_{K'|K}$ is then
 
\begin{equation}
    D_{K'}  \cong \Gal(h|_{\partial V_{K'}}) \cong \pi_1(\partial V_K)/h_*\pi_1(\partial V_{K'})\cong\langle \mu, \lambda \mid [\mu, \lambda], \mu^e, \lambda^f \rangle \cong \mathbb{Z}/e\mathbb{Z} \times \mathbb{Z}/f\mathbb{Z}
\end{equation}
where $e, f$ denote the branch index and the covering degree of $K'$ over $K$, respectively. 

It fits into the exact sequence
\begin{equation}
    1 \to I_{K'|K} \to D_{K'|K} \to \Gal(h|_{K'}) \to 1
\end{equation}
where the inertia group is generated by the meridian, $I_{K'|K} = \langle \mu \mid \mu^e \rangle \cong \mathbb{Z}/e\mathbb{Z}$.

\begin{Def}\label{def:frob}
For an unramified knot $K' \in h^{-1}(K)$ in a finite Galois cover $h: N \to M$ (so that $I_{K'|K} = 1)$, a \emph{Frobenius element} $\Frob_{K'|K} \in D_{K'|K}$ is the image of the longitude $\lambda$ under the quotient $\pi_1(\partial V_K) \to D_{K'|K}$. 
\end{Def}

\begin{remark}
In number theory, the Frobenius is the unique generator of $D_{\mathfrak{p}}/I_{\mathfrak{p}}$ acting as $x \mapsto x^{|k(p)|}$ on the residue field. In this paper, all applications take place over $S^3$, where the canonical longitude removes this ambiguity.
\end{remark}

\subsection{Chebotarev links}\label{sec:chebo}

We have the following definition of the Chebotarev property for a link:

\begin{Def}\label{def:chebo}
Let $\mathcal{L} = \bigcup_{i \in \mathbb{Z}\geq 0}K_i$ be a tame countable infinite link in a 3-manifold $M$. Then $\mathcal{L}$ satisfies a \emph{Chebotarev property} if for all $n \in \mathbb{Z}_{\geq 0}$, and any surjective homomorphism $\rho: \pi_1(M \setminus L_n) \to G$ to a finite group $G$, the following density equality holds:
\begin{equation}
    \lim_{\nu \to \infty}\frac{\#\{n < j \leq \nu \mid \rho([K_j]) = C\}}{\nu} = \frac{\#C}{\#G}
\end{equation}
where $L_n = \bigcup_{i\leq n}K_i$ and $[K_j]$ denote the conjugacy class $K_j \in \pi_1(M \setminus L_n)$. A link $\mathcal{L}$ is \emph{stably Chebotarev} if $\mathcal{L}$ is Chebotarev, and for every finite branched cover $h: N \to M$ branched along a finite sublink of $\mathcal{L}$, the preimage $\mathcal{L}_N$ satisfies the Chebotarev property when ordered by the induced length function
\begin{equation}
    \ell_h(K') = |D_{K'|h(K')}| \cdot \ell(h(K'))
\end{equation}
where $\ell$ is the length function on $\mathcal{L}$. 
\end{Def}

\begin{lemma}\label{lma:chebo}
Let $\mathcal{L} \subset M$ be a Chebotarev link, $L \subset \mathcal{L}$ a finite sublink, $\rho: \pi_1(M \setminus L) \to G$ a finite group, and $C \subset G$ be a conjugacy class. Then there exist infinitely many knots $K \in \mathcal{L} \setminus L$ such that $\rho([K]) = C$.
\end{lemma}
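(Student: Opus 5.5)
The plan is to read the statement straight off the density equality in Definition \ref{def:chebo}. We may assume $\rho$ is surjective: otherwise we replace $G$ by $\rho(\pi_1(M\setminus L))$. If $C$ does not meet this image, no knot satisfies $\rho([K])=C$, so the lemma is meant for conjugacy classes of the image, and we restrict to those. Also, $\rho([K])=C$ should be read as ``the image of the conjugacy class of $K$ lies in $C$''. Since $C$ is a conjugacy class of the image group, this is the same as the image being equal to $C$.

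\textbf{Step 1: reduce to an initial segment.} The link is ordered as $K_0,K_1,\dots$, and $L$ is a finite sublink. Choose $n$ so large that $L\subset L_n=\bigcup_{i\le n}K_i$. Deleting the extra components gives an inclusion
\[
M\setminus L_n\hookrightarrow M\setminus L,
\]
which induces a homomorphism $\iota:\pi_1(M\setminus L_n)\to\pi_1(M\setminus L)$. Then $\rho'=\rho\circ\iota$ is still surjective. The reason is that $\iota$ is itself surjective: removing a knot from a 3-manifold only adds meridian generators, so filling it back in (the map induced by inclusion) is onto on $\pi_1$. For $j>n$, the knot $K_j$ lies in $M\setminus L_n$, and its conjugacy class maps under $\iota$ to its class in $\pi_1(M\setminus L)$. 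Hence $\rho'([K_j])=\rho([K_j])$.

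\textbf{Step 2: apply the density and argue by contradiction.} The Chebotarev property for $\rho'$ gives
\[
\lim_{\nu\to\infty}\frac{\#\{n<j\le\nu \mid \rho([K_j])=C\}}{\nu}=\frac{\#C}{\#G}>0 .
\]
Suppose only finitely many $K_j$ with $j>n$ satisfied $\rho([K_j])=C$. Then the numerator would be bounded, and the limit would be $0$, a contradiction. So infinitely many indices $j>n$ work. Each such $K_j$ lies in $\mathcal{L}\setminus L_n\subset\mathcal{L}\setminus L$, which proves the lemma.

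\textbf{Main obstacle.} The only delicate point is the bookkeeping in Step 1. One has to check that the free homotopy class of $K_j$ is compatible under the inclusion map. One also has to check surjectivity of $\iota$, which follows from van Kampen applied to regluing the solid tori. Everything else is immediate from the positivity of $\#C/\#G$.
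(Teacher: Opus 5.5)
Your argument is correct, and it matches the reasoning the paper relies on: the paper states this lemma without proof, as an immediate consequence of Definition \ref{def:chebo}, since a positive density $\#C/\#G$ forces infinitely many knots. Your reduction from an arbitrary finite $L$ to an initial segment $L_n$ via the surjection $\pi_1(M\setminus L_n)\twoheadrightarrow\pi_1(M\setminus L)$, and your remark that $\rho$ must be surjective (or $C$ must meet its image), are details the paper leaves unstated but implicitly needs.
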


We also apply the Chebotarev property to prove an immediate property about linking numbers between components of a Chebotarev link $\mathcal{L}$. 

\begin{Def}
Let $K, K'$ be disjoint knots in $S^3$. The \emph{linking number} $\lk(K, K') \in \mathbb{Z}$ is the image of $[K] \in H^1(S^3 \setminus K', \mathbb{Z})$ under the canonical isomorphism $H_1(S^3 \setminus K, \mathbb{Z}) \cong \mathbb{Z}$, where the meridian $\mu_K$ is its positive generator. 
\end{Def}

\begin{remark}
Linking numbers on $S^3$ are well-defined; we use the standard identification $H_1(S^3 \setminus L, \mathbb{Z}) \cong \mathbb{Z}^n$ with meridional basis, under which the $i$-th coordinate of $[K]$ is $\lk(K, K_i)$.
\end{remark}

\begin{lemma}\label{lma:linking}
Let $\mathcal{L} \subset S^3$ be a Chebotarev link and let $K \in \mathcal{L}$. Given any $n \in \mathbb{Z}_{>0}$, $a \in \mathbb{Z}/n\mathbb{Z}$, there exist infinitely many knots $K' \in \mathcal{L} \setminus L$ such that $\lk(K, K') \equiv a$ (mod $n$). In particular, there exist infinitely many $K' \in \mathcal{L}$ such that $\lk(K, K') \neq 0$. 
\end{lemma}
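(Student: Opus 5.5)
The plan is to realize linking numbers modulo $n$ as the image of a knot's conjugacy class under a finite quotient, and then apply Lemma \ref{lma:chebo}. Take the finite sublink $L = K$, i.e. $L_m$ with $m$ the index of $K$ in the fixed ordering; if $K$ is not an initial segment, take $L$ to be the initial segment $L_m$ containing $K$. In either case $L$ is a finite sublink of $\mathcal{L}$ containing $K$.

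Next, define
\[
\rho: \pi_1(S^3 \setminus L) \to H_1(S^3\setminus L,\mathbb{Z}) \cong \mathbb{Z}^{m+1} \to \mathbb{Z} \to \mathbb{Z}/n\mathbb{Z},
\]
where the middle map is projection to the coordinate corresponding to $K$ in the meridional basis. By the remark on linking numbers, this sends $[K']$ to $\lk(K,K') \bmod n$. The map is surjective because the meridian $\mu_K$ maps to $1$. The target group is abelian, so conjugacy classes are singletons; take $C=\{a\}$.

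Lemma \ref{lma:chebo} then gives infinitely many $K' \in \mathcal{L}\setminus L$ with $\rho([K'])=a$, that is, $\lk(K,K')\equiv a \pmod n$. Since $L$ is finite and contains $K$, these knots are distinct from $K$.

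For the final claim, take $n=2$ and $a=1$. Every resulting $K'$ has odd linking number with $K$, hence nonzero linking number.

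The only point needing care is that $[K']$ is a well-defined conjugacy class, which requires choosing a path to the basepoint. Its image in homology is independent of that choice, because the target of $\rho$ is abelian. So there is no real obstacle; the substantive input is entirely Lemma \ref{lma:chebo}, applied to an abelian quotient.
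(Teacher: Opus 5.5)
Your proposal is correct and matches the paper's proof. Both compose the abelianization of $\pi_1(S^3\setminus L)$ with projection onto the $\mu_K$-coordinate mod $n$, observe that $[K']\mapsto \lk(K,K') \bmod n$, and apply Lemma \ref{lma:chebo} with $C=\{a\}$. You also make explicit two points the paper leaves implicit: taking $L$ to be an initial segment $L_m$ containing $K$, as the Chebotarev definition requires, and choosing $n=2$, $a=1$ (any $a\not\equiv 0$ works) to get the ``in particular'' claim.
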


\begin{proof}
Define a homomorphism $\rho: \pi_1(S^3 \setminus L) \to \mathbb{Z}/n\mathbb{Z}$ by 
\begin{equation}
    \rho(\mu_{K'}) = \begin{cases} 1 & K' = K \\ 0 & K' \neq K \end{cases}
\end{equation}
composed with the abelianization map to the first homology. This is surjective because $\rho(\mu_K) = 1$ generates $\mathbb{Z}/n\mathbb{Z}$. For any $K \in L$, $K' \in \mathcal{L} \setminus L$, the image of the free homotopy class $[K]$ under the abelianization has $\mu_K$-component equal to $\lk(K, K')$. Thus, 
\begin{equation}
    \rho([K]) = \lk(K, K') \text{ (mod $n$)}
\end{equation}
Applying Lemma \ref{lma:chebo} with $G = \mathbb{Z}/n\mathbb{Z}$, $C = a$, and this $\rho$, there exist infinitely many $K' \in \mathcal{L} \setminus L_n$ with $\rho([K']) = a$, i.e. with $\lk(K, K') \equiv a$ (mod $n$). In particular $\lk(K, K') \neq 0$. 
\end{proof}

\begin{lemma}\label{lma:incompressible}
Let $\mathcal{L} \subset S^3$ be a Chebotarev link with a knot $K \in \mathcal{L}$. Then there exists a knot $K'' \in \mathcal{L}$ such that $\pi_1(\partial V_K)$ is $\pi_1$-injective in $\pi_1(S^3 \setminus L)$ for any finite $L$ containing $K \cup K''$. 
\end{lemma}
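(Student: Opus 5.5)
The plan is to choose $K''\in\mathcal{L}$ with $\lk(K,K'')\neq 0$, which Lemma \ref{lma:linking} provides (infinitely many such knots exist). We then show that for any finite $L\supset K\cup K''$, the torus $\partial V_K$ is incompressible in the exterior $X_L = S^3\setminus \mathrm{int}\,V_L$. Since $\partial V_K$ is a two-sided torus in an orientable 3-manifold, incompressibility is equivalent to $\pi_1$-injectivity by the loop theorem / Dehn's lemma. This reduces the statement to showing that no essential simple closed curve on $\partial V_K$ bounds an embedded disk in $X_L$.

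Suppose, for contradiction, that $\partial V_K$ compresses in $X_L$ along a disk $D$ with $\partial D=\gamma$ essential on $\partial V_K$.
\begin{enumerate}
\item $\gamma$ is nontrivial in $H_1(\partial V_K)=\mathbb{Z}\mu\oplus\mathbb{Z}\lambda$, so write $[\gamma]=a\mu+b\lambda$ with $(a,b)$ primitive.
\item Since $\gamma$ bounds in $X_L\subset S^3\setminus K''$, its class in $H_1(S^3\setminus K'')\cong\mathbb{Z}$ vanishes. That class is $a\,\lk(\mu_K,K'')+b\,\lk(K,K'')=b\,\lk(K,K'')$, because a meridian of $K$ bounds a small disk missing $K''$. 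As $\lk(K,K'')\neq 0$, this forces $b=0$.
\item So $\gamma$ is the meridian $\mu_K$ up to sign. But $\mu_K$ has linking number $1$ with $K$, so it cannot bound a disk in $S^3\setminus K\supset X_L$. In homological terms, $[\mu_K]$ generates $H_1(S^3\setminus K)\cong\mathbb{Z}$ and is nonzero there.
\end{enumerate}
This gives the contradiction, so $\partial V_K$ is incompressible. Neither homological argument depends on the other components of $L$, since adding components only shrinks $X_L$ and makes disks harder to find. This is why any $L$ containing $K\cup K''$ works.

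The point needing most care is the reduction from $\pi_1$-injectivity to incompressibility. In general, injectivity of a boundary torus requires the torus to be incompressible. By the loop theorem, if $\pi_1(\partial V_K)\to\pi_1(X_L)$ has nontrivial kernel, there is an embedded compressing disk whose boundary is an essential simple closed curve on $\partial V_K$. This is exactly the case treated above.

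One subtlety is that $\partial V_K$ is a boundary component of $X_L$ rather than an interior surface, but the loop theorem applies directly to boundary components. A second subtlety is that $S^3\setminus L$ and $X_L$ are homotopy equivalent, so injectivity into $\pi_1(S^3\setminus L)$ is the same statement. Tameness of $\mathcal{L}$ guarantees that $V_K$ and $V_L$ exist and are tame, so these standard 3-manifold tools apply.
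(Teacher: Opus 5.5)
Your proposal is correct and is essentially the paper's proof. You choose $K''$ with $\lk(K,K'')\neq 0$ via Lemma~\ref{lma:linking}, write the boundary of a putative compressing disk as $a\mu+b\lambda$, and use homology to force $a=b=0$. The only differences are cosmetic. You read off the two coefficients separately in $H_1(S^3\setminus K'')$ and $H_1(S^3\setminus K)$, whereas the paper works in $H_1(S^3\setminus L)\cong\mathbb{Z}^{|L|}$ with its meridional basis. You also spell out the loop-theorem reduction from $\pi_1$-injectivity to incompressibility, which the paper uses only tacitly.
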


\begin{proof}
By Lemma \ref{lma:linking}, there exists some $K'' \in \mathcal{L}$ such that $\lk(K, K'') \neq 0$. For any finite $L \subset \mathcal{L}$ containing $K$ and $K''$, any compressing disk $D \subset S^3 \setminus L$ for $\partial V_K$ has $\partial D$ a primitive curve on $\partial V_K$; in $H_1(\partial V_K) = \mathbb{Z}\mu \oplus \mathbb{Z}\lambda$, write $[\partial D] = a\mu + b\lambda$ with $(a, b) \neq (0, 0)$. Now $H_1(S^3 \setminus L)  = \mathbb{Z}^{|L|}$ with basis $\{\mu_{K_i}\}$, and $\lambda_K = \sum_{K_i \in L \setminus K}\lk(K, K_i)\mu_{K_i}$. Substituting, we get
\begin{equation}
    0 = a\mu_K + b\sum_{K_i \neq K}\lk(K, K_i)\mu_{K_i}
\end{equation}
Linear independence of $\mu_{K_i}$ forces $a = 0$, and $b \cdot \lk(K, K_i) = 0$ for all $K_i$. Since $K'' \in L$ with $\lk(K, K'') \neq 0$, we get $b = 0$. Thus $(a, b) = (0, 0)$, contradicting essentiality of $\partial D$. So we have $\pi_1(\partial V_K) \hookrightarrow \pi_1(S^3 \setminus L)$.
\end{proof}

\begin{corollary}\label{cor:incompressible2}
Let $\mathcal{L} \subset S^3$, and let $h: M \to S^3$ be a branched cover of $S^3$ branched over a finite sublink of $\mathcal{L}$. Let $L$ be a finite sublink of $\mathcal{L}_N$. There exists some $L'$ containing $L$ such that for any component $K \in L'$, $\pi_1(\partial V_K)$ is $\pi_1$-injective in $\pi_1(M \setminus L')$. 
\end{corollary}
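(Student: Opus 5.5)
We prove Corollary \ref{cor:incompressible2} by lifting the linking-number argument of Lemma \ref{lma:incompressible} to $M$. The idea is to adjoin to $L$ enough knots of $\mathcal{L}_M$ that every component of the enlarged link has a boundary torus which is incompressible in the complement.

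Since $\mathcal{L}$ is stably Chebotarev, the preimage $\mathcal{L}_M$ is again a Chebotarev link in $M$, with the ordering induced by $\ell_h$.

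\textbf{Step 1 (incompressibility criterion).} Fix a finite link $L' \subset \mathcal{L}_M$ and a component $K \in L'$. Suppose $\partial V_K$ compresses in $M\setminus L'$, say along a disk $D$ whose boundary is a primitive class $a\mu+b\lambda$. Then $a\mu+b\lambda$ maps to zero in $H_1(M\setminus L';\mathbb{Z})$, and hence to zero in $H_1(M\setminus L';\mathbb{Z}/n)$ for every $n$.

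So it suffices to find a finite-abelian quotient of $\pi_1(M\setminus L')$ that detects both the meridian and the longitude of $K$ independently enough that no primitive combination $a\mu + b\lambda$ dies. This replaces the explicit meridional basis used on $S^3$, which is unavailable since $H_1(M)$ may have torsion and rank.

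\textbf{Step 2 (detecting the meridian).} The meridian $\mu_K$ has infinite order in $H_1(M\setminus L')$. Indeed, $\partial V_K$ is nonseparating once $K$ is removed, and by Lefschetz duality the meridian pairs to $1$ with a relative class.

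Concretely, take a Seifert-type surface argument, or simply use the surjection $H_1(M\setminus L')\to \mathbb{Z}$ dual to $K$. This requires $[K]$ to be torsion in $H_1(M\setminus (L'\setminus K))$. If it is not, we work with a finite multiple of $K$ instead, and obtain a homomorphism $\phi_n:H_1(M\setminus L')\to\mathbb{Z}/n$ with $\phi_n(\mu_K)\neq 0$.

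\textbf{Step 3 (detecting the longitude, via Chebotarev).} This is the main obstacle. We need, for each $K$ in a finite set, a knot $K''\in\mathcal{L}_M$ whose linking with $K$ is nonzero modulo $n$ for all large $n$ in a controlled way. To get it, apply Lemma \ref{lma:chebo} to $\mathcal{L}_M$ with the map $\rho:\pi_1(M\setminus L)\to H_1(M\setminus L;\mathbb{Z}/n)$ and a conjugacy class $C$ in which the $\mu_K$-dual coordinate is nonzero. This yields infinitely many $K''$ such that, once $K''$ is added to the link, $\lambda_K$ has a nonzero component along $\mu_{K''}$.

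Because $\mu_{K''}$ is a new generator of $H_1$ of the complement, this adjoins an independent direction. Arguing as in Lemma \ref{lma:incompressible}, the relation $a\mu_K+b\lambda_K=0$ then forces $a=0$ from the $\mu_K$-coordinate and $b=0$ from the $\mu_{K''}$-coordinate, which is a contradiction.

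\textbf{Step 4 (closing up the link).} Adding $K''$ creates a new component, which itself needs a partner. To avoid an infinite regress, choose partners pairwise: for $K''$, note that linking is symmetric (in the appropriate torsion-corrected sense), so $K$ already serves as a partner for $K''$.

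We therefore set $L'=L\cup\{K''_K : K\in L\}$ and verify that each added $K''_K$ has $K$ as its partner. Adding further components only enlarges $H_1$ by free meridian generators, so it preserves both the $\mu$- and the $\lambda$-detection. Hence the conclusion holds for every component of $L'$, and $\pi_1$-injectivity follows by the loop theorem.
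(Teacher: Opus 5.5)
Your Steps 2--4 have a genuine gap. They carry over to $M$ homological facts that are special to link complements in $S^3$: meridians freely generating $H_1$, a longitude whose meridian coordinates are linking numbers, and symmetry of linking. None of these holds in a general branched cover $M$. Consider the exact sequence
\[
H_2(M)\longrightarrow \textstyle\bigoplus_{K_i\in L'}\mathbb{Z}\mu_{K_i}\longrightarrow H_1(M\setminus L')\longrightarrow H_1(M)\longrightarrow 0,
\]
whose first map records intersection numbers with the $K_i$. It shows that $\mu_K$ is torsion, possibly zero, in $H_1(M\setminus L')$ as soon as some $\Sigma\in H_2(M;\mathbb{Q})$ has $\Sigma\cdot K\neq 0$ and $\Sigma\cdot K_i=0$ for the other $K_i\in L'$. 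This happens already for $K=S^1\times\{\mathrm{pt}\}\subset S^1\times S^2$, where $\mu_K$ bounds a disk in the complement and $\partial V_K$ is compressible.

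So the claim of Step 2 is false as stated. ``Work with a finite multiple of $K$'' does not repair it: no multiple of a non-torsion class becomes torsion, and a link cannot contain a multiple of a knot. Step 3 has the same problem in three ways:
\begin{itemize}
\item $H_1(M\setminus L;\mathbb{Z}/n)$ has no ``$\mu_K$-dual coordinate'' until you construct a homomorphism playing that role.
\item $\mu_{K''}$ is a new infinite-order generator only when $[K'']$ lies rationally in the span of the boundary classes of the exterior of $L$.
\item ``$\lambda_K$ has a nonzero component along $\mu_{K''}$'', and the symmetry invoked in Step 4, are statements about linking numbers. These exist, and are symmetric, only for rationally null-homologous knots, and components of $\mathcal{L}_M$ need not be.
\end{itemize}
Hence the deduction $a=b=0$ is unsupported. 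Your Step 1 criterion is sound, and monotonicity under adding components does hold. But it holds simply because $\pi_1(\partial V_K)\to\pi_1(M\setminus L'')\to\pi_1(M\setminus L')$ composes to the original map, not because new meridians are free.

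The missing idea is that you never need the homology of $M$: push down to $S^3$ and pull back, as the paper does.
\begin{itemize}
\item Apply Lemma~\ref{lma:incompressible} to each knot of $h(L)$ and add its partner. By symmetry of linking numbers in $S^3$, the original knot serves as a partner for the added one. This gives a finite $L''\subset\mathcal{L}$ containing $h(L)$ with every boundary torus $\pi_1$-injective in $S^3\setminus L''$.
\item Put $L'=h^{-1}(L'')\supset L$. For $K\in L'$, take $V_K$ to be the component of $h^{-1}(V_{h(K)})$ around $K$. Then $h|_{\partial V_K}\colon\partial V_K\to\partial V_{h(K)}$ is a finite covering of tori, whether or not $h(K)$ is a branch component.
\item The composite $\pi_1(\partial V_K)\to\pi_1(M\setminus L')\xrightarrow{h_*}\pi_1(S^3\setminus L'')$ equals $\pi_1(\partial V_K)\hookrightarrow\pi_1(\partial V_{h(K)})\hookrightarrow\pi_1(S^3\setminus L'')$, which is injective. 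Hence $\pi_1(\partial V_K)\to\pi_1(M\setminus L')$ is injective.
\end{itemize}
This uses only the Chebotarev property of $\mathcal{L}$ in $S^3$, not stable Chebotarev.
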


\begin{proof}
Let $h(L)$ be the image in $S^3$ and apply the argument from Lemma \ref{lma:incompressible} to produce a link $L'' \subset S^3$ for which every component has $\pi_1$-injective boundary; let $L' = h^{-1}(L'')$.
\end{proof}

Chebotarev links were introduced by Mazur \cite{mazur}, when he asked if there is an arrangement of a countable link which will satisfy this property. This was designed to facilitate a topological analogue of the following property of primes in number fields. 

\begin{theorem}[Chebotarev's density theorem]
Let $L|K$ be a finite Galois extension with group $G$, and let $\mathcal{C}$ be a conjugacy class in $G$. Then 
\begin{equation}
    \varinjlim_n \frac{|\{\mathfrak{p}\in P_{L|K}(\mathcal{C}):N(\mathfrak{p})\leq n\}|}{|\{\mathfrak{p}\in P_{K}:N(\mathfrak{p})\leq n\}|} = \frac{\#\mathcal{C}}{\#G}
\end{equation}
where $P_K$ is the set of primes of $K$.
\end{theorem}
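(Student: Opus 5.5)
The statement is the classical Chebotarev density theorem, which the paper quotes only as the number-theoretic model for Definition \ref{def:chebo}; I would cite it rather than reprove it. If a proof is wanted, I would follow the standard analytic route via Artin $L$-functions.

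\textbf{Step 1: a Dirichlet-density version.} I would work with Dirichlet density and, for each irreducible character $\chi$ of $G=\Gal(L|K)$, consider the Artin $L$-function $L(s,\chi)$. The orthogonality relations give
\begin{equation}
\sum_{\mathfrak{p}\in P_{L|K}(\mathcal{C})} N(\mathfrak{p})^{-s} = \frac{\#\mathcal{C}}{\#G}\sum_{\chi}\overline{\chi(\mathcal{C})}\,\log L(s,\chi) + O(1) \quad (s\to 1^+).
\end{equation}

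\textbf{Step 2: behaviour at $s=1$.} The needed analytic input is that $L(s,\chi)$ has a simple pole at $s=1$ for the trivial character, and for nontrivial $\chi$ is holomorphic and nonvanishing there.

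\textbf{Step 3: reduction to the abelian case.} I would not try to get the input of Step 2 from Brauer induction directly. Instead I would use Deuring's reduction. Take $\sigma\in\mathcal{C}$ and $H=\langle\sigma\rangle$, so that $L|L^H$ is cyclic. For a cyclic extension, the abelian case follows from Hecke $L$-functions of class field theory, whose nonvanishing at $s=1$ is exactly the analytic input of Step 2 in that case. One then counts primes of $L^H$ of degree one lying under primes with Frobenius $\sigma$. This count recovers the density $\#\mathcal{C}/\#G$, because $\#\mathcal{C}=[G:C_G(\sigma)]$ and the fibres of the map from primes of $L^H$ to primes of $K$ have the right size.

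\textbf{Step 4: natural density.} Finally I would pass from Dirichlet density to the natural density stated in the theorem. This needs a Tauberian argument (Wiener--Ikehara) together with the absence of zeros on $\mathrm{Re}(s)=1$.

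\textbf{Main obstacle.} The hard step is the analytic input itself: nonvanishing on the line $\mathrm{Re}(s)=1$ and the class field theory behind the abelian $L$-functions. That is far outside the scope of this paper, which is why I would cite the theorem. Within the paper it serves only as motivation for the Chebotarev property of links in Definition \ref{def:chebo}, which is assumed as a hypothesis rather than derived.
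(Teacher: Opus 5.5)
Your proposal agrees with the paper, which states Chebotarev's density theorem only as the classical number-theoretic model for Definition~\ref{def:chebo} and gives no proof, so citing it is exactly what the paper does. Your optional sketch (Artin $L$-functions with orthogonality, Deuring's reduction to the cyclic extension $L|L^H$, then Wiener--Ikehara) is the standard argument; the one point to tidy is that the natural-density step should be run in the cyclic case and then carried through Deuring's reduction, which works equally well for natural density, because the reduction itself gives no information about nonabelian $L(s,\chi)$ on $\mathrm{Re}(s)=1$.
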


Mazur's question was given a positive answer by McMullen in \cite{mcmullen2013knots}, where he showed several examples including:

\begin{Def}\cite{mcmullen2013knots}
Let $M \to S^1$ be the torus bundle with Anosov monodromy $f$ corresponding to $\begin{pmatrix} 2 & 1 \\ 1 & 1 \end{pmatrix}$. The complement of the zero section in $M$ is homemorphic to the complement of the figure-eight knot. The set of periodic cycles of monodromy around the figure-eight knot is the \emph{planetary link of the figure-eight knot}. 
\end{Def}

This was shown in \cite{mcmullen2013knots} to be Chebotarev, and shown in \cite{ueki2021chebotarev} to be stably Chebotarev. 

\medskip

Other properties of countable links, designed to formulate an analogue of idelic class field theory, include \emph{stable genericity} as defined by Mihara \cite{mihara}, and \emph{very admissible} links as defined by Niibo--Ueki \cite{uekiniibo}. We have the following implications: Chebotarev $\Longrightarrow$ stably generic $\Longrightarrow$ very admissible. The first implication was proven by Ueki \cite{ueki2021chebotarev}, and the second was proven by Mihara \cite{mihara}. 

\subsection{Strategy of proof and the classical Neukirch--Uchida}\label{subsec:NU}

In this paper, we will mainly follow the main steps of the proof of the Neukirch--Uchida theorem as they appear in Chapter XII.2 of \cite{neukirch2013cohomology}, \cite{neukirchinvenciones}, \cite{uchida}, providing a parallel picture in the world of $3$-manifolds. For the convenience of the reader and to emphasise this parallelism, we sketch here the main steps of the proof from \cite{neukirch2013cohomology}. The steps are as follows:
\begin{enumerate}
    \item Two number fields $K_1,K_2$ in $\overline{\mathbb{Q}}$, which are Galois over $\mathbb{Q}$ and have the same set of totally split primes in $\mathbb{Q}$ coincide. This step uses the Chebotarev property to show that the density of totally split primes is $\frac{1}{[K:\mathbb{Q}]}$.
    \item Show that an isomorphism $\sigma$ between absolute Galois groups induces a bijection between primes for which totally split primes in $K_1$ are sent to totally split primes in $K_2$. This is known as the \emph{local correspondence}, which states that a profinite isomorphism induces a bijection between prime ideals in number fields which records the decomposition information. This step is achieved via local-global principles in Galois cohomology, particularly the Hasse principle, the Grunwald--Wang theorem, and Poitou--Tate duality.
    \item Use the first two steps to conclude that for any open normal subgroup $N$ of the absolute Galois group containing the absolute Galois groups of $K_1$ and $K_2$, $\sigma(N) = N$. Thus, there is an induced isomorphism $\sigma_N$ between finite Galois groups. The local correspondence then implies that $\sigma$ sends cyclic subgroups to conjugate cyclic subgroups in the Galois group of the normal extension corresponding to $N$.
    \item Given the following diagram of fields
    \begin{center}
    \begin{tikzpicture}[every node/.style={inner sep=2pt},]

        \node (L)  at ( 0,  1.8) {$L$};
        \node (K1) at (-1.6, 0.8) {$K_1$};
        \node (K2) at ( 1.6, 0.8) {$K_2$};
        \node (Q) at ( 0, -0.2) {$\mathbb{Q}$};

        \draw (L)  -- node[above left,  font=\small] {} (K1);
        \draw (L)  -- node[above right, font=\small] {} (K2);
        \draw (L)  -- node[right,       font=\small] {$G$} (Q);
        \draw (K1) -- node[below left,  font=\small] {} (Q);
        \draw (K2) -- node[below right, font=\small] {} (Q);
    \end{tikzpicture}
    \end{center}
    find an extension of $L$ which is Galois over $\mathbb{Q}$ with Galois group isomorphic to $\mathbb{F}_p[G]$. (This is a \emph{proper solution to an embedding problem}.) Then perform a computation on idempotent elements of $\mathbb{F}_p[G]$ to conclude that $\sigma_N$ acts as conjugation on the Galois group associated to $N$, which in turns extends to an inner automorphism of $\Gal(\overline{\mathbb{Q}}/\mathbb{Q})$.  
\end{enumerate}
Our analogous proof of the Neukirch--Uchida theorem (Theorem \ref{thm:main}) goes as follows:
\begin{enumerate}
    \item (Sections \ref{sec:densities}, \ref{sec:totsplit}) We show that if $h_i: M_i \to S^3$ are two normal branched covers whose sets of totally split knots in $\mathcal{L}$ completely coincide, then $M_1$ and $M_2$ are the same cover (Corollary \ref{cor:normal}). This proof uses \emph{zeta- and $L$-functions} of a Chebotarev link $\mathcal{L} \subset M$. These generalise the functions in Parry-Pollicott \cite{parry1990zeta} and relate different notions of densities of knots, particularly a \emph{Dirichlet density} and a \emph{natural density} (Proposition \ref{prop:densityequiv}). 
    \item (Sections \ref{sec:Local-Global}, \ref{sec:localcorr}) We prove a \emph{local correspondence}, i.e. any isomorphism of Galois groups maps decomposition groups to decomposition groups lying above the same knot component in $\mathcal{L} \subset S^3$ (Theorem \ref{thm:second}). This is proven using local-global principles in Galois cohomology of manifolds, which play the roles of the Hasse principle and the Grunwald--Wang theorem in the classical proof of the number-theoretic Neukirch--Uchida theorem (Theorems \ref{thm:maininjective} and \ref{thm:mainsurjective}). To prove Theorem \ref{thm:mainsurjective}, we bypass a full Poitou--Tate duality using Lefschetz duality (Lemma \ref{lma:lefschetz} and Theorem \ref{thm:exact}) to form an exact sequence in $H^1$ which suffices. 
    \item (Section \ref{sec:proof}) Steps 3 and 4 of the number-theoretic proof translate more directly into the topological setting, constructing a common Galois cover of $M_1$ and $M_2$ and using an analogous embedding result (Lemma \ref{lma:embdding}). 
\end{enumerate}

\section{Absolute Galois groups of manifolds}\label{sec:abs}

Let $\mathcal{L}$ be a countable infinite link with an ordering $\{K_i\}_{i\in\mathbb{Z}_{>0}}$ of its knots, and let $\Cov(M, \mathcal{L})$ denote the category of finite branched covers of $M$ branched along a finite sublink of $\mathcal{L}$. From its definition one gets that $\Cov(M, \mathcal{L})$ is the filtered colimit of $\Cov(M, L_n)$, the category of finite branched covers of $M$ branched along $L_n=\displaystyle\bigcup_{i=1}^nK_i$. So we have that $\Cov(M, \mathcal{L})$ is a Galois category (see \cite{mathew2016galois} Proposition 5.34). 

\begin{Def}
The \emph{absolute Galois group of $M$ relative to $\mathcal{L}$}, denoted $\Gal(M, \mathcal{L})$, is defined to be the fundamental group of the Galois category $\Cov(M, \mathcal{L})$.
\end{Def}

From the general theory of Galois categories, there is a pro-object that pro-represents the fundamental group of $Cov(M,\mathcal{L})$ in the category of profinite covers. We call such pro-manifold the \emph{universal branched pro-cover}. Throughout the paper we work in a fixed universal branched pro-cover, and denote it by $h_{\mathcal{L}}: \overline{M_\mathcal{L}} \to M$. From its universal properties, this is a minimal object such that any $h \in \Cov(M, \mathcal{L})$ factors through $h_{\mathcal{L}}$.

\medskip

For the foundational theory of Galois categories see \cite{sga} (Expos\'e V). See \cite{uekiniibo} for an example of a similar philosophy of absolute Galois groups of manifolds relative to families of infinite links.

\begin{Def}\label{def:universal}
The \emph{universal $\mathcal{L}$-branched cover} $h_{\mathcal{L}}: \overline{M_\mathcal{L}} \to M$ is the fixed universal branched pro-cover of $\mathrm{Cov}(M, \mathcal{L})$. 
\end{Def}

\begin{prop}\label{prop:Galoisgroups}
We have three equivalent characterisations of the Galois group:
\begin{enumerate}
    \item $\Gal(h_{\mathcal{L}})$
    \item $\lim_L\widehat{\pi}_1(M \setminus L)$
    \item $\varprojlim_{h \in \Cov(M, \mathcal{L})}\Gal(h)$
\end{enumerate}
\end{prop}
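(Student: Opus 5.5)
The plan is to prove the equivalences (1)$\Leftrightarrow$(3) and (3)$\Leftrightarrow$(2) separately.

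\textbf{(1) and (3).} By the general theory of Galois categories (\cite{sga}, Expos\'e V), the fundamental group of $\Cov(M,\mathcal{L})$ with respect to the fibre functor at the base point is the automorphism group of the pro-representing object. Concretely, $\Gal(h_{\mathcal{L}})$ is the inverse limit of the automorphism groups of the Galois objects of the category, taken over the cofiltered system of pointed Galois objects. Since the Galois covers are cofinal among all $h\in\Cov(M,\mathcal{L})$, and for non-Galois $h$ the paper sets $\Gal(h)$ to be the Galois group of the minimal Galois cover through $h$, the limit over all $h$ agrees with the limit over Galois $h$. The transition maps are the restriction maps $\Gal(h')\to\Gal(h)$ for $h'$ factoring through $h$, which are well defined once base points are fixed. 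This gives $\Gal(h_{\mathcal{L}})\cong\varprojlim_h\Gal(h)$.

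\textbf{(3) and (2).} For a fixed finite sublink $L$, Fox completion gives an equivalence between finite covers of $M\setminus L$ and branched covers of $M$ branched along (a subset of) $L$. This equivalence matches deck groups, because a deck transformation of $Y\to X$ extends uniquely to the completion. Galois covers of $M\setminus L$ correspond to finite-index normal subgroups $U\trianglelefteq\pi_1(M\setminus L)$ with deck group $\pi_1/U$. Hence
\[
\varprojlim_{h\in\Cov(M,L)}\Gal(h)\cong\varprojlim_U \pi_1(M\setminus L)/U=\widehat{\pi}_1(M\setminus L).
\]
Since $\Cov(M,\mathcal{L})$ is the filtered colimit of the categories $\Cov(M,L_n)$, every $h$ lies in some $\Cov(M,L_n)$. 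For $L\subset L'$, the inclusion $M\setminus L'\hookrightarrow M\setminus L$ induces the surjection $\widehat{\pi}_1(M\setminus L')\to\widehat{\pi}_1(M\setminus L)$. This map is dual to the fully faithful inclusion $\Cov(M,L)\subset\Cov(M,L')$: a cover branched only over $L$ restricts to a cover of $M\setminus L'$, and its Fox completion recovers the original cover. Taking limits and interchanging the two inverse limits over the directed set of pairs $(L,h)$ with $h\in\Cov(M,L)$ then yields $\varprojlim_h\Gal(h)\cong\varprojlim_L\widehat{\pi}_1(M\setminus L)$. Since the $L_n$ are cofinal among finite sublinks, it suffices to take the limit over $n$.

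\textbf{Main obstacle.} The main difficulty is bookkeeping rather than substance: base points and transition maps must be compatible. Removing $L'\setminus L$ must not move the base point, so I will fix $b_M$ off $\mathcal{L}$. The Fox-completion equivalence must also be functorial with respect to enlarging $L$. The point to check here is that a cover of $M\setminus L$ and its restriction to $M\setminus L'$ have the same Fox completion, which follows from the uniqueness of Fox completion. Once the identification is made at each finite level compatibly with the transition maps, the comparison of limits is formal.
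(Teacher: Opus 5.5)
Your proposal is correct and follows essentially the same route as the paper. Step (2)$\Leftrightarrow$(3) is the paper's double inverse system: over finite sublinks $L$, and over covers branched along $L$, with finite-index subgroups of $\pi_1(M\setminus L)$ giving $\widehat{\pi}_1(M\setminus L)$ at each level, followed by an interchange of limits; (1) is then tied in via the Galois-category formalism of \cite{sga}. The only difference is which side of the triangle you close: you link (1) to (3) through the description of the fundamental group as a limit of automorphism groups of Galois objects, whereas the paper links (2) to $\Gal(M,\mathcal{L})$ through $\Cov(M,\mathcal{L})=\varinjlim_n \Cov(M,L_n)$; your explicit handling of Fox completion, base points and compatibility under enlarging $L$ also fills in bookkeeping the paper leaves implicit.
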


\begin{proof}
We first show $\Gal(h_{\mathcal{L}}) \cong \Gal(M, \mathcal{L})$. This is due to the definition of a universal pro-cover $h_\mathcal{L}: \overline{M_{\mathcal{L}}} \to M$, being the pro-object that pro-represents $\Gal(M, \mathcal{L})$. 

\medskip

We now show $\lim_L\widehat{\pi}_1(M \setminus L) \cong \varprojlim_{h \in \Cov(M, \mathcal{L})}\Gal(h)$. For each finite sublink $L \subset \mathcal{L}$, the covers in $\Cov(M, \mathcal{L})$ branched at $L$ are parametrised by finite-index subgroups of $\pi_1(M \setminus L)$, and their inverse limit of deck transformation groups is precisely the profinite completion of $\widehat{\pi}_1(M \setminus L)$. The full system $\Cov(M, \mathcal{L})$ is therefore organised as a double inverse system: first over covers at each fixed $L$, then over finite sublinks $L \subset \mathcal{L}$ ordered by inclusion. Since both indexing systems are cofiltered, the interchange of limits gives
\begin{equation}
    \varprojlim_{h \in \Cov(M, \mathcal{L})}\Gal(h) \cong \varprojlim_{L \subset \mathcal{L}}\widehat{\pi}_1(M \setminus L)
\end{equation}
as desired. 

\medskip

Finally, we show $\lim_L\widehat{\pi}_1(M \setminus L) \cong \Gal(M, \mathcal{L})$. Since the category $\Cov(M, \mathcal{L})$ is the filtered colimit of $\Cov(M, L_n)$, we get that its fundamental group (i.e. the absolute Galois group of $M$ relative to $\mathcal{L}$ is isomorphic to the inverse limit of the fundamental groups of the $\mathrm{Cov}(M, L_n)$, $\hat\pi_1(M\setminus L_n)$. 
\end{proof}

Let $N$ be a branched cover in $\Cov(M, \mathcal{L})$, i.e. $h: N \to M$. By the Galois correspondence, the open subgroup $\Gal(M, \mathcal{L}_M)$ of $\Gal(M, \mathcal{L})$ corresponds to $h$.

\begin{Def}
Let $K$ be a knot in $\mathcal{L} \subset M$. We denote by $\overline{K}$ a choice of component in the universal $\mathcal{L}$-branched cover lying over $K$.
\end{Def}

The above is equivalent to a choice for each $h \in \Cov(M, \mathcal{L})$ of some component $K_h \in h^{-1}(K)$ such that if $N_1 \underset{h_1}{\to} N_2 \underset{h_2}{\to} M$ is a tower of branched covers, $h_1(K_{h_1}) = K_{h_2}$.

\begin{Def}\label{def:decomposition}
Let $K$ be a knot in $\mathcal{L}$ and $\overline{K}$ a component above it in $\overline{M_\mathcal{L}}$. 
Define an absolute \emph{decomposition group} to be the decomposition group of $\overline{K}$
\begin{equation}
    D_{\overline{K}|K}=\{g \in \Gal(M,\mathcal{L}) \mid g(\bar{K}) = \bar{K}\}
\end{equation}
and an absolute \emph{inertia group} of $K$ to be the inertia group of $\overline{K}$ 
\begin{equation}
    I_{\overline{K}|K}= \ker(D_{\overline{K}|K} \twoheadrightarrow \Gal(\bar{h}|_{\bar{K}}))
\end{equation}
\end{Def}

\begin{remark}
 
Just as in number theory, if $\overline{K}_1, \overline{K}_2$ are two choices of knots in a universal cover lying over the same knot $K$, their decomposition groups are related by conjugation in the absolute Galois group. 
\end{remark}

We now discuss the profinite topology of knot components in 3-manifold groups, which will allow us to determine the isomorphism types of absolute decomposition groups.

\begin{theorem}\label{thm:profinitetorus}
Let $\mathcal{L} \subset S^3$ be a stably Chebotarev link. For any $K \in \mathcal{L}$ and any lift $K' \in h^{-1}(K)$ in any $h: M \to S^3 \in \Cov(S^3, \mathcal{L})$, we have $ D_{\overline{K'}|K'} \cong\widehat{\pi}_1(\partial V_{K'})\cong \widehat{\mathbb{Z}}^2$.
\end{theorem}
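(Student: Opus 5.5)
We need to show that the absolute decomposition group $D_{\overline{K'}|K'}$ of $K'$ inside $\Gal(M,\mathcal{L}_M)$ is isomorphic to $\widehat{\pi}_1(\partial V_{K'})\cong\widehat{\mathbb{Z}}^2$. The plan is to realise $D_{\overline{K'}|K'}$ as an inverse limit of finite decomposition groups and compare it with the profinite completion of the peripheral torus group.

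\textbf{Step 1: reduction to finite decomposition groups.} By Proposition \ref{prop:Galoisgroups}, $\Gal(M,\mathcal{L}_M)=\varprojlim_h \Gal(h)$ over finite Galois covers $h:N\to M$ in $\Cov(M,\mathcal{L}_M)$. The choice of $\overline{K'}$ gives compatible components $K'_h$, so
\[
D_{\overline{K'}|K'}=\varprojlim_h D_{K'_h|K'}.
\]
From Section 2.2, each finite decomposition group is
\[
D_{K'_h|K'}\cong \pi_1(\partial V_{K'})/h_*\pi_1(\partial V_{K'_h})\cong \mathbb{Z}/e\times\mathbb{Z}/f,
\]
and it is the image of $\pi_1(\partial V_{K'})\to\Gal(h)$.

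\textbf{Step 2: identification with a closure.} The previous step means $D_{\overline{K'}|K'}$ is the closure in $\Gal(M,\mathcal{L}_M)$ of the image of $\pi_1(\partial V_{K'})$. It is therefore a quotient of $\widehat{\pi}_1(\partial V_{K'})=\widehat{\mathbb{Z}}^2$. It remains to show this quotient map is injective, i.e. that every finite-index subgroup $U\subset\pi_1(\partial V_{K'})$ contains $\pi_1(\partial V_{K'})\cap\ker(\pi_1(M\setminus L)\to G)$ for some finite link $L$ and finite quotient $G$. Equivalently, the profinite topology of $\pi_1(M\setminus L)$, for $L$ large, induces the full profinite topology on the peripheral subgroup.

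\textbf{Step 3: injectivity.} This is the main obstacle. First I apply Corollary \ref{cor:incompressible2} to choose a finite $L\ni K'$ such that $\pi_1(\partial V_{K'})\hookrightarrow\pi_1(M\setminus L)$.

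The most concrete route is to produce, for each $n$, a finite cover in which the local degrees satisfy $n\mid e$ and $n\mid f$. Then the subgroups $n\,\pi_1(\partial V_{K'})$, which are cofinal, are closed in the induced topology.

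\emph{Ramification.} For $e$, take the abelian cover $S^3\setminus h(L)\to H_1\to\mathbb{Z}/n$ sending $\mu_{h(K')}\mapsto1$ and all other meridians to $0$. Pulled back and composited with $h$, its ramification index at $K'$ is divisible by $n$ up to the bounded factor from $h$; using $n\cdot\deg h$ fixes this.

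\emph{Residue degree.} For $f$, use Lemma \ref{lma:linking} to pick $K''\in\mathcal{L}$ with $\lk(h(K'),K'')\equiv1 \pmod{n\deg h}$. Then take the cyclic cover branched over $K''$ with $\mu_{K''}\mapsto1$. In it the longitude of $h(K')$ maps to a generator, so the residue degree is divisible by $n\deg h$, and hence $n\mid f$ upstairs.

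\emph{Combining.} Form the compositum of these covers with $h$, adding $K''$ and $h(K')$ to $L$. The image of $\pi_1(\partial V_{K'})$ in the resulting finite Galois group then surjects onto $(\mathbb{Z}/n)^2$ via a map killing $n\,\pi_1(\partial V_{K'})$ only.

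One subtlety is checking that the two cyclic covers give independent images for $\mu$ and $\lambda$, which follows since $\mu\mapsto(1,*)$ and $\lambda\mapsto(0,1)$ mod $n$. Residual finiteness and goodness of 3-manifold groups (Theorem 3.5.1 of \cite{3groups}) could alternatively give separability of the peripheral subgroup, but the explicit abelian construction above avoids it.

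\textbf{Step 4: conclusion.} Passing to the limit gives $D_{\overline{K'}|K'}\cong\varprojlim_n(\mathbb{Z}/n)^2=\widehat{\mathbb{Z}}^2\cong\widehat{\pi}_1(\partial V_{K'})$.
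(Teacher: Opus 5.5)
Your proof is correct in substance, and it rests on the same construction as the paper's surjectivity step. You choose $K''$ with $\lk(K,K'')\equiv 1 \pmod N$ by Lemma~\ref{lma:linking}, and you use the two cyclic characters $\mu_K\mapsto 1$ and $\mu_{K''}\mapsto 1$ (all other meridians $\mapsto 0$). Together these restrict to reduction mod $N$ on $\pi_1(\partial V_K)=\langle \mu_K,\lambda_K\rangle$.

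\textbf{How the routes differ.} The paper works on $S^3$. It proves one half via Lemma~\ref{lma:incompressible} plus Wilkes' separability of peripheral subgroups, and the other half via the abelian covers. It then passes to $K'$ by intersecting with the open subgroup $\Gal(M,\mathcal{L}_M)$. You instead observe that $D_{\overline{K'}|K'}$ is the closure of the image of $\pi_1(\partial V_{K'})$. So $\widehat{\pi}_1(\partial V_{K'})\to D_{\overline{K'}|K'}$ is onto for free, and the abelian covers alone give injectivity. This makes both the $\pi_1$-injectivity input and the separability theorem unnecessary. Your appeal to Corollary~\ref{cor:incompressible2} is in fact never used. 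Your aside that goodness would give separability is not right, since goodness is a cohomological property, but you do not rely on it. What the paper's reduction to $S^3$ buys is that it avoids tracking the sublattice $\Lambda_0:=h_*\pi_1(\partial V_{K'})\subset\pi_1(\partial V_K)$ upstairs.

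\textbf{A slip in that tracking.} The modulus $n\deg h$ does not guarantee $n\mid e$ when $h$ is not Galois, because local indices need not divide $\deg h$. For example, take $K'$ with $e_0=2$ in a $3$-fold (simple) branched cover and $n=2$. Then $N=6$, and $\mu_{K'}\mapsto e_0=2\in\mathbb{Z}/6$ has order $3$.

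Moreover, $n\mid e$ and $n\mid f$ would not by themselves make $n\,\pi_1(\partial V_{K'})$ open. For instance, the lattice $\langle n\mu,\,n\lambda+\mu\rangle$ has $e=f=n$ but is not contained in $n\mathbb{Z}^2$.

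The clean statement is this. Set $d=[\pi_1(\partial V_K):\Lambda_0]$ and $N=nd$. Then $N\,\pi_1(\partial V_K)\subset n\Lambda_0$, so the kernel of $\Lambda_0\to(\mathbb{Z}/N)^2$ lies in $n\Lambda_0$. This is what your Combining step effectively checks. Since you run the construction for every $n$, applying it with $nd$ in place of $n$ rescues the conclusion. Still, state it this way, or simply do the construction on $S^3$ with modulus $n$ and note that a finite-index sublattice of $\mathbb{Z}^2$ inherits the full profinite topology.
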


\begin{proof}
We prove $D_{\overline{K}|K} \cong \widehat{\pi}_1(\partial V_{K})$; the case of lifts $K'$ follows since $D_{\overline{K'}|K'} = D_{\overline{K}|K} \cap \Gal(M, \mathcal{L}_M)$ is open in $\widehat{\mathbb{Z}}^2$, hence isomorphic to $\widehat{\mathbb{Z}}^2$. There is a natural map $f: D_{\overline{K}|K} \to \widehat{\mathbb{Z}}^2$. We prove this is injective and surjective.

\medskip

\underline{Injectivity:} By Lemma \ref{lma:incompressible}, there exists a knot $K'' \in \mathcal{L}$ such that $\pi_1(\partial V_K)$ is $\pi_1$-injective in $\pi_1(S^3 \setminus L)$ for any finite $L$ containing $K \cup K''$. By Corollary 6.20 in \cite{wilkes}, every finite-index subgroup of $\pi_1(\partial V_K)$ is separable in $\pi_1(S^3 \setminus L)$, and hence the full profinite topology is induced (see \cite{3groups} A.24), giving $\widehat{\mathbb{Z}}^2 \hookrightarrow \widehat{\pi}_1(S^3 \setminus L)$ at each level. Since the transition maps in the inverse system of all such $L$ restrict to the identity on $\partial V_K$, and $(g_i)$ with $f(g_i) = 0$ vanishes level-wise, so $f$ is injective. 

\medskip

\underline{Surjectivity:} Given $\phi: \mathbb{Z}^2 \twoheadrightarrow A$ for $A$ finite abelian, set $n = |A|$. Fix any finite $L \subset \mathcal{L}$ containing $K$. By Lemma \ref{lma:linking}, there exists $K'' \in \mathcal{L} \setminus L$ with $\lk(K, K'') \equiv 1$ (mod $n$). Define $\psi: H_1(S^3 \setminus (K \cup K'')) \to A$ by $\psi(\mu_K) = \phi(\mu_K), \psi(\mu_{K''}) = \phi(\lambda_K)$. Since $[\lambda_K] = \lk(K, K'')[\mu_{K''}]$ in $H_1(S^3 \setminus (K \cup K''))$ and $\lk(K, K'') \equiv 1$ (mod $n$), we get $\psi(\lambda_K) = \phi(\lambda_K)$, so $\psi$ extends $\phi$. The Fox completion of the corresponding abelian cover lies in $\Cov(S^3, \mathcal{L})$ and has decomposition group $\psi(\pi_1(\partial V_K)) = \phi(\mathbb{Z}^2) = A$ at $K$. Since all finite quotients of $\mathbb{Z}^2$ are abelian, $f$ is surjective.
\end{proof}

\begin{remark}
For more on the geometric group-theoretic properties and profinite groups in the context of 3-manifolds, see \cite{3groups}.
\end{remark}

An analogue of a totally split prime in number theory is defined as follows. 

\begin{Def}\label{def:totsplit}
Given a finite Galois covering $h: N \to M \in \Cov(M, \mathcal{L})$, of degree $n$, a component $K \in \mathcal{L}$ is \emph{totally split} if $h^{-1}(K)$ has $n$ components. See Lemma \ref{lma:totsplit} for equivalent characterizations.
\end{Def}

\begin{remark}\label{lma:totsplit}
The following equivalent definitions follow immediately from the Galois correspondence and the definition of the absolute decomposition group. Let $\mathcal{L} \subset M$ be a stably Chebotarev link, and take $h: N \to M \in \Cov(M, \mathcal{L})$. Let $K \in \mathcal{L}$. 
\begin{enumerate}
    \item A lift $K' \in \mathcal{L}_N$ has $D_{K'|K} = 1$ if and only if there exists some compatible inverse system $\overline{K}$ over $K$ containing $K'$ for which $D_{\overline{K}|K} \subset \Gal(N, \mathcal{L}_N)$.
    \item $K$ is totally split in $N$ if and only if for all compatible inverse systems $\overline{K}$ over $K$, $D_{\overline{K}|K} \subset \Gal(N, \mathcal{L}_N)$.
    \item Let $\rho: \Gal(N, \mathcal{L}_N) \to \Gal(h)$ be the quotient map. Let $L$ be the ramified set of the covering $h$. Then $K \in \mathcal{L} \setminus L$ is totally split if and only if $\rho([K]) = 1$.
\end{enumerate} 
\end{remark}

An analogue of a totally indecomposable prime in number theory is defined as follows. 

\begin{Def}
Given a finite Galois covering $h: N \to M \in \Cov(M, \mathcal{L})$, a knot $K \in \mathcal{L}$ is \emph{totally indecomposable} in $h$ if there is a single component $K'$ in $h^{-1}(K)$. 
Let $\overline{h}: \overline{M_{\mathcal{L}}} \to \widetilde{M}$ be a pro-covering. A knot $K \in \mathcal{L}_N$ is \emph{totally indecomposable} in $\overline{h}$ if for every finite intermediate cover $h': M' \to \widetilde{M}$ factoring through $\overline{h}$, the knot $K$ has a unique preimage in $(h')^{-1}(K)$.
\end{Def}

\begin{remark}
The following equivalent definitions of total indecomposability follow from the Galois correspondence.
\begin{enumerate}
    \item For any components $K'$ in $h^{-1}(K)$, $D_{K'|K} = \Gal(h)$. This definition will apply for a profinite cover. 
    \item For any compatible inverse system $\overline{K}$ above $K$, $D_{\overline{K}|K} \cdot \Gal(N, \mathcal{L}_N) \cong \Gal(M, \mathcal{L})$. 
    \item For pro-coverings, $\Gal(\overline{h}) \subset D_{\overline{K}|K}$. 
\end{enumerate}
\end{remark}

\section{Dirichlet and natural densities}\label{sec:densities}

Recall the following definition:
\begin{Def}
The \emph{Dirichlet density} of a set of primes $P$ in a number field is 
\begin{equation}
    \delta(P) = \lim_{s \to 1^+}\frac{\sum_{\mathfrak{p} \in P}N(\mathfrak{p}^{-s})}{\sum_{\mathfrak{p}}N(\mathfrak{p})^{-s}}
\end{equation}
where the sum in the denominator runs over all primes, and $N(\mathfrak{p})$ is the absolute norm. 
\end{Def}

Many number-theoretic computations of prime densities are stated and computed in terms of \emph{Dirichlet densities}. However, the above definition of Chebotarev links is a direct analogue of the \emph{natural density}.

\medskip

In order to make certain analytic computations more accessible, one desires a notion of Dirichlet density. The first step is to assign a notion of size to the knots in a Chebotarev link. In order for this size function to be meaningful and relate back to the original notion of density we used, we require that our size function maintains the ordering of the knot complements in $\mathcal{L}$.

\begin{example}
Here are examples of Chebotarev links with appropriate size functions.
\begin{itemize}
    \item Theorem 1.1 of \cite{mcmullen2013knots} states that the closed orbits of the geodesic flow in the unit tangent bundle of a closed hyperbolic surface form a Chebotarev link. The length function here is the length of the associated geodesics, and here the length function determines the ordering.
    \item Theorem 1.2 of \cite{mcmullen2013knots} proves that the same holds for closed orbits of any topologically mixing pseudo-Anosov flow on a closed 3-manifold. This construction holds for any fibered hyperbolic knot complement, for instance.
    \item Let $p_n$ be the $n$-th prime number. Given any Chebotarev link $\mathcal{L} \subset M$, define $\ell:\{K_i\} \rightarrow \mathbb{R}$, by $\ell(K_i) = \ln(p_i)$. This length function also preserves the order of knots in $\mathcal{L}$.
\end{itemize}
\end{example}

\begin{remark}
Since the natural density does not depend on ``size" but just on the order, one can take any size function such that the order is preserved. For geometric length functions arising from the geodesic flow, such as the first two above, the analogue of $\psi(x) \sim x$ is the prime geodesic theorem, which is available by \cite{parry1990zeta}. Thus, Proposition \ref{prop:densityequiv} holds equally for such length functions. We use the prime-number length throughout this paper because it is defined for any Chebotarev link without requiring a geometric structure, but no result in this paper depends on this choice. 
\end{remark}

For a size function $\ell$ that preserves the order of knots in $\mathcal{L}$ and such that $\ell(K_\nu) = o(\nu)$, for any conjugacy class in a finite group $C \subset G$ and surjection $\rho: \pi_1(M \setminus L_n) \to G$, we get the equality of densities
\begin{equation}
    \lim_{\nu \to \infty}\frac{\#\{n < j \leq \nu \mid \ell(K_j) \leq \nu, \rho([K_j]) = C\}}{\#\{n < j \leq \nu \mid \ell(K_j) \leq \nu\}} = \lim_{\nu \to \infty}\frac{\#\{n < j \leq \nu \mid \rho([K_j]) = C\}}{\nu}
\end{equation}

\begin{Def}\label{def:norm}
Given an order-preserving size function $\ell$ on a Chebotarev link, define the \emph{norm} $N(K_i) = e^{\ell(K_i)}$. Define $\pi_{\mathcal{L}}(x):=\underset{N(K_j)\leq x}{\sum}1$ and $\pi_{\mathcal{L},C}(x):=\underset{\underset{\rho([K_j])=C}{N(K_j)\leq x}}{\sum}1$. These are our ``prime counting functions". 
\end{Def}

In summary, a Chebotarev link $\mathcal{L} \subset M$ has an associated length function. If we are given a finite covering branched over finitely many components of $\mathcal{L}$, we will also associate a length function to the inverse image of the link and show that under this length function, one also has $L$-functions with desirable properties. For the rest of this paper, we will use the prime number length described above.

\begin{Def}
Suppose we have a Chebotarev link $\mathcal{L} \subset S^3$. Given a knot $K \in \mathcal{L} \subset S^3$ and a component $K'$ in $h^{-1}(K)$, the \emph{degree} of $K'$ over $K$ is defined as the degree of induced covering $h|_{K'}: K' \to K$. If $K$ is unramified in $h$, this is equal to the size of the decomposition group $D_{K'|K}$.
\end{Def}

\begin{Def}
Suppose we have a Chebotarev link $\mathcal{L} \subset S^3$. We use the size function $\ell: \{K_i \in \mathcal{L}\} \rightarrow \mathbb{R}$, defined by $\ell(K_i) = \ln(p_i)$. Let $h: M \rightarrow S^3 \in \Cov(S^3, \mathcal{L})$. For a connected component $K' \in h^{-1}(K)$ with degree $n$ over $K$, we define a new length function $\ell_h: \{K' \in \mathcal{L}_M\} \rightarrow \mathbb{R}$ by $\ell_h(K') = n\ell(K)$. For brevity we will just use $\ell$ when the context is clear.
\end{Def}

\begin{Def}\label{def:dirichlet}
The \emph{Dirichlet density} of a sublink $L$ in $\mathcal{L}_M$ is defined to be 
\begin{equation}
    \delta(L, \mathcal{L}_M) = \lim_{s \to 1^+} \frac{\sum_{K \in L}\frac{1}{N(K)^s}}{\sum_{K \in \mathcal{L}_M}\frac{1}{N(K)^s}}
\end{equation}
The \emph{zeta function} of $\mathcal{L}_M$ is defined to be 
\begin{equation}
    \zeta(\mathcal{L}_M, s) = \prod_{K_i \in \mathcal{L}_M}\frac{1}{1 - N(K_i)^{-s}}
\end{equation}
Given a finite sublink $L_n \subset \mathcal{L}_M$, a conjugacy class in a finite group $C \subset G$ and a surjection $\rho: \pi_1(M - L_n) \to G$, define the \emph{$\zeta$-function relative to $(C, G, \rho)$} to be
\begin{equation}
    \zeta(\mathcal{L}_M, C, s) = \prod_{K_i \in \mathcal{L}_M \setminus L_n, \rho([K_i]) = C}\frac{1}{1 - N(K_i)^{-s}}
\end{equation}
\end{Def}

The existence of this limit is not assumed a priori; Proposition \ref{prop:densityequiv} proves it exists under the Chebotarev hypothesis. The following is a consequence for Laplace--Stieltjes transforms and is a classical Abelian theorem for Mellin transforms; see Chapter V of \cite{widder}.

\begin{lemma}\label{lma:calc}
If $f, g: [1, \infty) \to [0, \infty)$ are nondecreasing with $g(x) \to \infty$, and $f(x)/g(x) \to \alpha$, then for $F(s) = s\int_1^\infty f(x)x^{-s-1}dx$ and $G(s) = s\int_1^\infty g(x)x^{-s-1}dx$ (both finite for $\text{Re}(s) > 1$), one has $F(s)/G(s) \to \alpha$ as $s \to 1^+$. 
\end{lemma}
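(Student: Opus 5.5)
The plan is to prove this Abelian theorem directly from the definitions by an $\varepsilon$-splitting of the Mellin-type integrals, using only that $g(x)\to\infty$ forces $G(s)\to\infty$ as $s\to 1^+$.

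\textbf{Step 1: the denominator blows up.} For any $X$, $g(x)\ge g(X)$ on $[X,\infty)$ because $g$ is nondecreasing. Hence
\[
G(s)\ \ge\ s\,g(X)\int_X^\infty x^{-s-1}\,dx \;=\; g(X)\,X^{-s}.
\]
Letting $s\to1^+$ gives $\liminf_{s\to 1^+} G(s)\ge g(X)/X$. This bound alone is too weak. I instead bound the contribution of a bounded interval against the tail:
\[
\text{for fixed } X,\qquad s\int_1^X g(x)\,x^{-s-1}\,dx \;\le\; g(X)\,(1-X^{-s}) \;\le\; g(X),
\]
which stays bounded as $s\to1^+$. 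So I need $G(s)\to\infty$ in order to discard the bounded pieces.

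\textbf{Step 2: proving $G(s)\to\infty$.} Given $A$, choose $X_0$ with $g\ge A$ on $[X_0,\infty)$. Then
\[
G(s)\ \ge\ A\,s\int_{X_0}^\infty x^{-s-1}\,dx \;=\; A\,X_0^{-s}\ \longrightarrow\ A/X_0 .
\]
This is not sufficient either, since $X_0$ depends on $A$. I would therefore first handle the case where $G(s)$ stays bounded as $s\to1^+$. By monotone convergence,
\[
G(1)\;=\;\int_1^\infty g(x)\,x^{-2}\,dx\;<\;\infty.
\]
Since $g$ is nondecreasing, for every $Y$ we have
\[
\int_Y^{2Y} g(x)\,x^{-2}\,dx\ \ge\ \frac{g(Y)}{2Y}.
\]
So $g(Y)/Y\to0$, and $g$ grows sublinearly. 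If $g$ is unbounded but $G(1)<\infty$, the conclusion may genuinely require care. In the intended application $g$ is a knot-counting function $\sim x/\log x$, for which $G(s)\to\infty$. I would therefore add, or use, the implicit hypothesis that $G(s)\to\infty$; note that $G(s)$ is finite for $s>1$ by assumption.

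\textbf{Step 3: main estimate.} Assume $G(s)\to\infty$ as $s\to1^+$. Given $\varepsilon>0$, pick $X$ with $|f(x)-\alpha g(x)|\le\varepsilon g(x)$ for $x\ge X$. Split both integrals at $X$:
\[
|F(s)-\alpha G(s)|\ \le\ s\int_1^X |f-\alpha g|\,x^{-s-1}\,dx\;+\;\varepsilon\,G(s).
\]
The first term is bounded by $f(X)+\alpha g(X)$ uniformly in $s\in(1,2]$, using monotonicity and $\int_1^X s\,x^{-s-1}\,dx\le1$. Dividing by $G(s)\to\infty$ gives
\[
\limsup_{s\to1^+}\,\bigl|F(s)/G(s)-\alpha\bigr|\ \le\ \varepsilon.
\]
Since $\varepsilon>0$ was arbitrary, $F(s)/G(s)\to\alpha$.

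\textbf{Main obstacle.} The only delicate point is the divergence $G(s)\to\infty$, which is where the hypothesis $g\to\infty$ has to be used. Under the lemma's literal hypotheses it may fail, for example for slowly growing $g$ with $\int g\,x^{-2}\,dx<\infty$. I would first try to verify it in the setting of the paper, where $g=\pi_{\mathcal L}$ satisfies $\pi_{\mathcal L}(x)\ge c\,x/\log x$ by the prime-number length function. This gives
\[
\int_1^\infty \frac{x/\log x}{x^{2}}\,dx\;=\;\infty,
\]
so monotone convergence yields $G(s)\to\infty$.
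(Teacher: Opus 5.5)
Your Step 3 is a correct and complete proof under the hypothesis $G(s)\to\infty$. It is the standard Abelian $\varepsilon$-splitting argument; the paper gives no proof of its own, only a reference to Widder, Chapter V.

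Your hesitation in Step 2 is justified, and it can be sharpened: the lemma is false as literally stated, because $g(x)\to\infty$ does not force $G(s)\to\infty$. For instance, take $g(x)=\log x$ and $f(x)=1+\log x$ on $[1,\infty)$. Both are nonnegative and nondecreasing, $g\to\infty$, and $f/g\to1$, but
\[
G(s)=s\int_1^\infty \log x\,x^{-s-1}\,dx=\frac{1}{s},\qquad F(s)=1+\frac{1}{s},\qquad \frac{F(s)}{G(s)}=1+s\longrightarrow 2\neq 1 .
\]
More generally, whenever $\int_1^\infty g(x)x^{-2}\,dx<\infty$, monotone convergence gives $F(s)/G(s)\to F(1)/G(1)$. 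This is a weighted average that has no reason to equal $\alpha$.

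The right hypothesis is therefore $\int_1^\infty g(x)x^{-2}\,dx=\infty$, which by monotone convergence is equivalent to $G(s)\to\infty$ as $s\to1^+$. State the lemma that way and your Step 3 proves it; the exploratory estimates of Steps 1--2, including $g(Y)/Y\to0$, can be deleted. This is also all the paper actually uses: right after the lemma it notes that in the application $g=\psi$ with $\psi(x)\sim x$, so $G(s)\sim s/(s-1)\to\infty$.

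Two small corrections to your verification paragraph. First, the function fed into the lemma is $\psi$ (with $f=\psi_C$), not $\pi_{\mathcal{L}}$, although your lower bound would also give divergence. Second, the bound $\pi_{\mathcal{L}}(x)\ge c\,x/\log x$ holds only for large $x$. So the divergent integral you display should start at, say, $x=2$ rather than $x=1$; at $x=1$ the integrand $1/(x\log x)$ has a non-integrable singularity that has nothing to do with the growth of $\pi_{\mathcal{L}}$.
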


In our application of this lemma, $g = \psi$ satisfies $\psi(x) \sim x$ (by the Prime Number Theorem for the prime number length function), which gives $G(s) \sim s/(s-1) \to \infty$. The goal of this section is to show the following.

\begin{prop}\label{prop:densityequiv}
Fix a Chebotarev link $\mathcal{L} \subset S^3$, a covering $h: M \to S^3 \in \Cov(S^3, \mathcal{L})$, a conjugacy class $C$ in a finite group $G$, a finite sublink $L_n \subset \mathcal{L}_M$ consisting of the first $n$ knots, and a surjection $\rho: \pi_1(M \setminus L_n) \to G$. If 
\begin{equation}
    \lim_{x \to \infty}\frac{\pi_{\mathcal{L}_M, C}(x)}{\pi_{\mathcal{L}_M}(x)} = \lim_{\nu \to \infty}\frac{\#\{n < j \leq \nu \mid \rho([K_j]) = C\}}{\nu} = \frac{\#C}{\#G}
\end{equation}
then 
\begin{equation}
    \delta(\{K_i \in \mathcal{L}_M \mid \rho([K_i]) = C\}, \mathcal{L}_M) = \frac{\#C}{\#G}
\end{equation}
In other words, if the Chebotarev property holds for the natural density, then it also holds using the Dirichlet density.
\end{prop}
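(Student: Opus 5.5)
The strategy is the classical one: natural density implies Dirichlet density, obtained by writing both Dirichlet sums as Mellin--Stieltjes transforms of the counting functions and then invoking the Abelian theorem, Lemma \ref{lma:calc}. Put $P_C=\{K_i\in\mathcal{L}_M\setminus L_n \mid \rho([K_i])=C\}$. Partial summation gives, for real $s>1$,
\begin{equation*}
\sum_{K\in P_C} N(K)^{-s}=\int_1^\infty x^{-s}\,d\pi_{\mathcal{L}_M,C}(x)= s\int_1^\infty \pi_{\mathcal{L}_M,C}(x)\,x^{-s-1}\,dx ,
\end{equation*}
and the analogous identity holds for $\sum_{K\in\mathcal{L}_M}N(K)^{-s}$ with $\pi_{\mathcal{L}_M}$. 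The boundary terms $\pi(X)X^{-s}$ vanish as $X\to\infty$ because $\pi(x)=O(x)$. This bound comes from the prime-number length: $N(K_i)=p_i$ on $\mathcal{L}$, and each knot of $\mathcal{L}$ has at most $\deg h$ lifts, each of norm at least that of its image. Convergence of both sums for $s>1$ follows from the same bound.

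Next, apply Lemma \ref{lma:calc} with $f=\pi_{\mathcal{L}_M,C}$, $g=\pi_{\mathcal{L}_M}$ and $\alpha=\#C/\#G$. Both functions are nondecreasing and nonnegative, and the hypothesis of the proposition is exactly $f(x)/g(x)\to\alpha$. The ratio of Dirichlet sums is then $F(s)/G(s)$, which tends to $\#C/\#G$ as $s\to1^+$. This is precisely the statement $\delta(P_C,\mathcal{L}_M)=\#C/\#G$; the finitely many knots of $L_n$ only change the numerator by a bounded amount, which is negligible once the denominator diverges.

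The step requiring care, and the main obstacle, is the hypothesis $g(x)\to\infty$, together with the ensuing divergence $G(s)\to\infty$ as $s\to1^+$. Without it the Abelian theorem can fail, since bounded contributions could dominate. I would argue as follows. Consider the knots of $\mathcal{L}$ that are totally split in the Galois closure of $h$. By Lemma \ref{lma:chebo} and the Chebotarev property there are infinitely many of them, with positive natural density. Each has $\deg h$ lifts of norm $p_i$. Hence $\pi_{\mathcal{L}_M}(x)\gg \pi(x)\sim x/\log x\to\infty$, and $\sum_{\mathcal{L}_M}N(K)^{-s}\ge c\sum_{p}p^{-s}\sim c\log\frac{1}{s-1}\to\infty$.

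If the version of Lemma \ref{lma:calc} one trusts needs $G(s)$ to diverge rather than only $g\to\infty$, this estimate supplies that divergence directly. The remark that $\psi(x)\sim x$ gives $G(s)\sim s/(s-1)$ is then used only to confirm the rate and is not essential.

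A final check is that the ordering of $\mathcal{L}_M$ used in the natural-density hypothesis is the one induced by $\ell_h$, so that the counting functions $\pi_{\mathcal{L}_M}$ and $\pi_{\mathcal{L}_M,C}$ really are the ones appearing in the hypothesis. This holds by the definition of stably Chebotarev.
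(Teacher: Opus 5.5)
Your proof is correct, and it is more direct than the paper's. The paper does not apply Lemma \ref{lma:calc} to the counting functions themselves. It first converts the hypothesis into $\psi_C(x)/\psi(x)\to\#C/\#G$ for Chebyshev-type sums, passing through $\theta_C/\theta$ and citing prime-number-theorem equivalences. It then identifies $s\int_1^\infty\psi(x)x^{-s-1}\,dx$ with $-\zeta'(\mathcal{L}_M,s)/\zeta(\mathcal{L}_M,s)$ (and likewise for $C$), and applies the Abelian lemma to these logarithmic derivatives. Finally, L'H\^opital's rule transfers the limit to $\ln\zeta(\mathcal{L}_M,C,s)/\ln\zeta(\mathcal{L}_M,s)$, which has the same limit as the Dirichlet-density ratio.

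You observe instead that the Mellin--Stieltjes transforms of $\pi_{\mathcal{L}_M,C}$ and $\pi_{\mathcal{L}_M}$ already are the two Dirichlet sums, so one application of the Abelian theorem finishes the proof. Your only analytic inputs are the crude bound $\pi_{\mathcal{L}_M}(x)\le \deg(h)\cdot\#\{p\le x\}=O(x)$ and the divergence of $\sum_{K\in\mathcal{L}_M}N(K)^{-s}$ as $s\to1^+$. This avoids the paper's prime-number-theorem asymptotics (such as $\psi(x)\sim x$) for $\mathcal{L}_M$. The paper justifies those by appealing to norms $N(K_i)=p_i$, but lifts in $\mathcal{L}_M$ have norms $p_i^{f}$.

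Your caution about Lemma \ref{lma:calc} is necessary, not optional. With only $g\to\infty$ the lemma fails: if $\int_1^\infty g(x)x^{-2}\,dx<\infty$, then $F(s)/G(s)\to F(1)/G(1)$, which need not equal $\alpha$. The hypothesis that actually drives the argument is $G(s)\to\infty$, which is exactly what your totally-split-knot lower bound supplies. In exchange, the paper's route sets up the zeta- and $L$-function formalism parallel to the classical proof, but that formalism is not needed for this proposition.
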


\begin{proof}
The idea is to relate the Dirichlet density to ratios of logarithmic derivatives of zeta functions via a Mellin-transform Abelian theorem, then use the Chebotarev hypothesis to evaluate these ratios. Define 
\begin{equation}
    \psi(x) = \sum_{n=1}^\infty\sum_{e^{n\ell(K_i)}\leq x}\ell(K_i)
\end{equation}
and
\begin{equation}
    \psi_C(x):=\sum_{n=1}^\infty\sum_{e^{n\ell(K_i)}\leq x, \rho([K_i]) = C}\ell(K_i)
\end{equation}
Recall that we are using $\ell(K_i) = \ln(p_i)$, where $p_i$ is the $i$-th prime. We have the standard equivalences $\pi_C/\pi \sim \theta_C/\theta \sim \psi_C/\psi$; we note that they rely on the prime number theory applied to the norms $N(K_i) = p_i$, which is available because our size function assigns to each knot the logarithm of the corresponding prime. So we have:
\begin{equation}\label{eq:39}
   \lim_{x\rightarrow\infty}\frac{\pi_{\mathcal{L}_M,C}(x)}{\pi_{\mathcal{L}_M}(x)}= \lim_{x\rightarrow\infty}\frac{\psi_C(x)}{\psi(x)} 
\end{equation}
Now we have
\begin{equation}\label{eqt:logzeta}
    \ln(\zeta(\mathcal{L}_M, s)) = \sum_i-\ln(1-N(K_i)^{-s}) = \sum_n\sum_i\frac{1}{ne^{\ell(K_i)ns}}
\end{equation}
and so the logarithmic derivative is computed as follows. For $\text{Re}(s) > 1$, 
\begin{equation}
    -\frac{\zeta '(\mathcal{L}_M, s)}{\zeta(\mathcal{L}_M,s)} = \sum_i\frac{e^{-s\ell(K_i)}\ell(K_i)}{1-e^{-s\ell(K_i)}}=\sum_n\sum_i\frac{\ell(K_i)}{e^{ns\ell(K_i)}}=s\int^\infty_1\frac{\psi(x)}{x^{s+1}}dx    
\end{equation}
Here the last equality is by partial summation. Performing a similar calculation restricted to a class $C$ yields
\begin{equation}
    -\frac{\zeta'(\mathcal{L}_M, C, s)}{\zeta(\mathcal{L}_M, C, s)} = s\int_1^\infty\frac{\psi_C(x)}{x^{s+1}}dx
\end{equation}
Because $\lim_{x \to \infty}\frac{\psi_C(x)}{\psi(x)} = \frac{\#C}{\#G}$, Lemma \ref{lma:calc} dictates that the ratio of the logarithmic derivatives approaches the same limit:
\begin{equation}
    \frac{\lim_{s \to 1^+}-\frac{\zeta '(\mathcal{L}_M,C,s)}{\zeta(\mathcal{L}_M,C,s)}}{\lim_{s \to 1^+}-\frac{\zeta '(\mathcal{L}_M,s)}{\zeta(\mathcal{L}_M,s)}}=\frac{\#C}{\#G}
\end{equation}
By Equations \ref{eq:39} and \ref{eqt:logzeta}, the Dirichlet density is given by $\lim_{s \to 1^+}\frac{\ln\zeta(\mathcal{L}_M, C, s)}{\ln\zeta(\mathcal{L}_M, s)}$. Because $\ln\zeta(\mathcal{L}_M, s) \to \infty$ as $s \to 1^+$, applying L'Hopital's rule in the form requiring only the denominator to diverge to this ratio of logarithms yields exactly the limit of the logarithmic derivatives above. Thus, the Dirichlet density is $\frac{\#C}{\#G}$, as desired.
\end{proof}

\section{Totally split knots}\label{sec:totsplit}

In this section, we seek to prove the analogous statement to step 1 discussed in subsection \ref{subsec:NU} for branched covers of 3-manifolds. For this section, fix a stably Chebotarev link $\mathcal{L} \subset S^3$. Given $h: M \to S^3 \in \Cov(S^3, \mathcal{L})$ we seek to show that the homeomorphism type of $M$ is determined by the set of knots in $\mathcal{L}$ which are totally split under $h$. 

\begin{lemma}\label{lma:densitytotallysplit}
For a normal covering $h: M \to S^3 \in \Cov(S^3, \mathcal{L})$, the natural density of knots in $\mathcal{L}$ which are totally split with respect to $h$ is $\frac{1}{|\Gal(h)|}$.
\end{lemma}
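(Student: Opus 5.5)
The plan is to read off the lemma directly from the Chebotarev property (Definition \ref{def:chebo}) applied to the trivial conjugacy class of the deck group. Let $h: M \to S^3$ be normal, branched over a finite sublink $L \subset \mathcal{L}$, and put $G = \Gal(h)$. Choose $n$ so large that $L \subset L_n = \bigcup_{i\le n} K_i$; removing finitely many knots does not change a natural density. The unbranched part of $h$ over $S^3 \setminus L_n$ is a regular $G$-cover. It therefore corresponds to a surjection $\rho: \pi_1(S^3 \setminus L_n) \to G$, namely $\pi_1(S^3\setminus L) \twoheadrightarrow G$ precomposed with the surjection $\pi_1(S^3 \setminus L_n)\to\pi_1(S^3\setminus L)$ induced by inclusion.

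The key step is to identify the totally split knots with the knots whose Frobenius class is trivial. For a knot $K_j$ with $j>n$, $K_j$ is unramified, so $I_{K'|K_j}=1$ and $D_{K'|K_j}$ is the cyclic group generated by the image of the longitude, i.e.\ by $\rho$ of a loop freely homotopic to $K_j$. Hence the number of components of $h^{-1}(K_j)$ equals $|G|/|D_{K'|K_j}|$. This number equals $|G|=\deg h$ exactly when $\rho([K_j])$ is the trivial conjugacy class $C=\{1\}$, which is item (3) of Remark \ref{lma:totsplit}. I would spell out this orbit--stabiliser count, since it is the only point needing care. One must check that the longitude class $\lambda$ and the free homotopy class $[K_j]$ agree up to conjugacy in $G$, which holds because the longitude is a pushoff of $K_j$ in $\partial V_{K_j}$.

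Applying the Chebotarev property with $C=\{1\}$, $\#C=1$, gives
\begin{equation*}
\lim_{\nu\to\infty}\frac{\#\{n<j\le\nu \mid K_j \text{ totally split in } h\}}{\nu} = \frac{1}{|G|} = \frac{1}{|\Gal(h)|}.
\end{equation*}
The finitely many knots $K_1,\dots,K_n$ contribute $O(1/\nu)$ and do not affect the limit, so the natural density of totally split knots in all of $\mathcal{L}$ is $1/|\Gal(h)|$. Via Proposition \ref{prop:densityequiv}, the same value is then available for the Dirichlet density, which is what the next step toward Corollary \ref{cor:normal} will need.

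The only real obstacle is the bookkeeping in the identification of totally split knots with trivial Frobenius. Branch knots in $L$ are excluded automatically because they lie in $L_n$. For unramified knots, the decomposition group is cyclic and generated by the Frobenius (Definition \ref{def:frob}), so triviality of the class is equivalent to $D_{K'|K_j}=1$. Once that is in place, the rest is a direct quotation of Definition \ref{def:chebo}.
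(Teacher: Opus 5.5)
Your proposal is correct and follows the paper's proof. Both identify the unramified totally split knots with those whose Frobenius class $\rho([K])$ is trivial (item (3) of Remark~\ref{lma:totsplit}), and then apply the Chebotarev property with $C=\{1\}$. Your version is somewhat more careful: you use a surjection $\pi_1(S^3\setminus L_n)\to\Gal(h)$ with $L\subset L_n$, as Definition~\ref{def:chebo} literally requires, and you justify the identification by an orbit--stabiliser count, whereas the paper defines $\rho$ on $\Gal(S^3,\mathcal{L})$ and cites the remark directly.
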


\begin{proof}
Let $L \subset \mathcal{L}$ be the finite set of ramified knots in $h$, and let $\rho: \Gal(S^3, \mathcal{L}) \to \Gal(h)$ be the natural quotient map. By Lemma \ref{lma:totsplit}, $K \in \mathcal{L} \setminus L$ is totally split if and only if $\rho([K])$ is trivial. Since $\mathcal{L}$ is a Chebotarev link, the natural density of knots $K \in \mathcal{L}$ such that $\rho([K])$ is trivial, i.e. $K$ is totally split, is exactly $\frac{1}{|\Gal(h)|}$. 
\end{proof}

\begin{remark}\label{rmk:stable}
Since $\mathcal{L}$ is stably Chebotarev, this lemma is also true for general normal covers $N \rightarrow M$. 
\end{remark}

\begin{lemma}\label{lma:density}
Let $h: N \to M \in \Cov(M, \mathcal{L}_M)$. Then the Dirichlet density of degree 1 knots in $\mathcal{L}_N$ is 1. 
\end{lemma}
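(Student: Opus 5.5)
This is the analogue of the classical fact that primes of degree one have Dirichlet density one. The plan is to split $\sum_{K'\in\mathcal{L}_N}N(K')^{-s}$ into contributions from degree-one knots and from knots of degree $\geq 2$. We then show the second part stays bounded as $s\to 1^+$, while the total diverges.

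First, fix notation. Write $\mathcal{L}_N^{(1)}$ for the knots $K'\in\mathcal{L}_N$ of degree $1$ over $h(K')\in\mathcal{L}_M$. Write $\mathcal{L}_N^{(\geq 2)}$ for the rest. By the definition of $\ell_h$, a knot $K'$ of degree $f$ over $K$ has $N(K')=N(K)^f$. Over a fixed $K$ there are at most $\deg h$ components.

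Next, bound the higher-degree part. For $s>1/2$ we have
\begin{equation*}
\sum_{K'\in\mathcal{L}_N^{(\geq 2)}}N(K')^{-s}\leq \deg(h)\sum_{K\in\mathcal{L}_M}N(K)^{-2s}.
\end{equation*}
If $M=S^3$ with the prime-number length, then $N(K_i)=p_i$ and the right side is dominated by $\sum_p p^{-2s}$, which converges uniformly near $s=1$. In general $M$ is itself a cover of $S^3$, so $N(K)=N(h_M(K))^{f}$ with at most $\deg(h_M)$ knots over each knot of $S^3$. Hence the same bound holds, up to a constant, by comparing with $\sum_p p^{-2s}$. To reduce everything to $S^3$, I would compose $N\to M\to S^3$ and work with degrees over $S^3$ throughout. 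The degree over $M$ is at most the degree over $S^3$, so degree $\geq 2$ over $M$ forces degree $\geq 2$ over $S^3$.

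Then show the denominator diverges. The degree-one part over $S^3$ contains the knots of $N$ lying over knots of $S^3$ that are totally split in the Galois closure of $N\to S^3$. By Lemma \ref{lma:densitytotallysplit} and Proposition \ref{prop:densityequiv}, these downstairs knots have positive Dirichlet density in $\mathcal{L}$. Since $\sum_{K\in\mathcal{L}}N(K)^{-s}=\sum_p p^{-s}\to\infty$, their contribution diverges as $s\to1^+$. Alternatively, divergence follows from $\ln\zeta(\mathcal{L},s)\to\infty$, as noted in the proof of Proposition \ref{prop:densityequiv}. So $\sum_{K'\in\mathcal{L}_N}N(K')^{-s}\to\infty$. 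Dividing the bounded higher-degree part by this divergent total gives ratio $\to 0$, hence the degree-one density is $1$.

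The main obstacle is bookkeeping rather than analysis. The statement is phrased relative to $M$, but the norms are defined from $S^3$. One must check that knots of degree $1$ over $M$ include everything not excluded by the convergent bound. This means tracking that $\ell_h$ is multiplicative under towers, so that $N(K')=N(K)^{f}$ with $f$ the degree over $M$. It also means confirming that the finitely many ramified knots are harmless, since they affect neither convergence nor divergence.
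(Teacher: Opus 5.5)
Your proposal is correct and follows essentially the same route as the paper. You bound the degree-$\geq 2$ contribution by $\deg(h)\sum N(K)^{-2s}$, which stays bounded as $s\to 1^+$, and you make the denominator diverge using the positive Dirichlet density of totally split knots (Lemma \ref{lma:densitytotallysplit} with Proposition \ref{prop:densityequiv}); your reduction to degrees over $S^3$ is in fact slightly more careful than the paper's, which compares $\mathcal{L}_M$-norms directly with $p_i$.

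Drop the ``alternative'' via $\ln\zeta(\mathcal{L},s)\to\infty$, because divergence of the sum over $\mathcal{L}$ alone does not force $\sum_{K'\in\mathcal{L}_N}N(K')^{-s}\to\infty$. Lifts have norm $N(K)^f$, so you genuinely need the supply of degree-one lifts.
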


\begin{proof}
We will show that knots of degree greater than one have density 0. Let $\mathbb{L}_h = \{K \in \mathcal{L}_N \mid \deg(K) > 1\}$. Let $n = \deg(h)$. Note that each knot in $\mathcal{L}_M$ has at most $n$ knot components above it in $\mathcal{L}_N$, and so
\begin{equation}
   \sum_{K \in \mathbb{L}_{h}}\frac{1}{N(K)^s} \leq n\sum_{K \in h(\mathbb{L}_{h})}\frac{1}{N(K)^{2s}}
\end{equation}
Here, we use that for any $K \in \mathcal{L}_N$ with $\deg(K) = d \geq 2$ over $h(K)$, the definition of the size function gives $\ell(K) = d\ell(h(K))$, so $N(K) = e^{\ell(K)} = N(h(K))^d \geq N(h(K))^2$. Thus, $N(K)^{-s} \leq N(h(K))^{-2s}$, and since each knot in $\mathcal{L}_M$ has at most $n$ components above it in $\mathcal{L}_N$, the bound follows by summing over the image $h(\mathbb{L}_{h})$. It follows that 
\begin{equation}\label{eq:dirichlet}
     \delta(\mathbb{L}_{h}, \mathcal{L}_N)) \leq \lim_{s \rightarrow 1^+}\frac{n\sum_{K \in h(\mathbb{L}_{h})}N(K)^{-2s}}{\sum_{K \in \mathcal{L}_N}N(K)^{-s}}
\end{equation}
By Lemma \ref{lma:densitytotallysplit} and Proposition \ref{prop:densityequiv}, the Dirichlet density of totally split knots in $\mathcal{L}_M$ is $1/|\Gal(h)| > 0$. Each totally split knot $K \in \mathcal{L}_M$ contributes $n$ degree-1 lifts to $\mathcal{L}_N$, each with norm $N(K) = p_i$, so the degree-1 contribution to the denominator is therefore at least $n\sum_{D_{K'|K}=1}N(K)^{-s}$. Thus, the denominator surpasses a positive proportion of $\sum p_i^{-s}$, which diverges as $s \to 1^+$. The sum $\sum_{K \in h(\mathbb{L}_{h})}N(K)^{-2s}$ is bounded by $\sum_{K \in \mathcal{L}_M}N(K)^{-2s} \leq \sum_ip_i^{-2s}$, which converges for $s > 1/2$. Thus, in Equation \ref{eq:dirichlet}, the numerator converges while the denominator diverges, so the Dirichlet density is zero.
\end{proof}

\begin{theorem}\label{thm:subcover}
Suppose $h_1: M_1 \to S^3$ is a finite normal branched covering, and $h_2: M_2 \to S^3$ is a finite branched covering over a Chebotarev link $\mathcal{L} \subset S^3$. 
If every knot $K \in \mathcal{L}$ that has a degree 1 knot over it in $h_2$, totally splits in $M_1$, then $M_1$ is a subcover of $M_2$. 
\end{theorem}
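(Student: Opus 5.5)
We follow the classical argument for the analogous statement in number theory (Neukirch, Chapter XII.2): pass to the compositum and compare densities of totally split knots. Let $M_{12}$ be the compositum of $M_1$ and $M_2$ over $S^3$, and let $\widetilde{M}$ be the Galois closure of $M_2$ in $\Cov(S^3,\mathcal{L})$. Since $h_1$ is normal, the proposition after the definition of the compositum shows that $M_{12}\to M_2$ is normal, with group $H := H_2/(H_1\cap H_2)\subset \Gal(h_1)$. It suffices to prove $H=1$, i.e. $M_{12}=M_2$, because then $H_2\subset H_1$ and $M_1$ is a subcover of $M_2$.

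\textbf{Key step.} Show that every degree-1 knot $K'\in\mathcal{L}_{M_2}$ lying over an unramified $K\in\mathcal{L}$ is totally split in $M_{12}\to M_2$. By hypothesis $K$ is totally split in $M_1$, so for every $\overline{K}$ over $K$ we have $D_{\overline{K}|K}\subset \Gal(M_1,\mathcal{L}_{M_1})$ (Remark \ref{lma:totsplit}(2)). Since $K'$ has degree 1 and $K$ is unramified, $D_{K'|K}=1$. By Remark \ref{lma:totsplit}(1) there is a compatible system $\overline{K}$ through $K'$ with $D_{\overline{K}|K}\subset \Gal(M_2,\mathcal{L}_{M_2})$. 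Then
\[
D_{\overline{K}|K}\subset \Gal(M_1,\mathcal{L}_{M_1})\cap \Gal(M_2,\mathcal{L}_{M_2}) = \Gal(M_{12},\mathcal{L}_{M_{12}}).
\]
So the decomposition group of $K'$ in $M_{12}/M_2$ is trivial. Since $M_{12}/M_2$ is Galois, all lifts of $K'$ are conjugate, so $K'$ is totally split in $M_{12}\to M_2$.

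\textbf{Density comparison.} Only finitely many knots are ramified, so they can be discarded. By Lemma \ref{lma:density} applied to $h_2$ (with $M=S^3$, viewing $\mathcal{L}_{M_2}$ with its induced length), the degree-1 knots of $\mathcal{L}_{M_2}$ have Dirichlet density $1$. By the key step, the set of knots of $\mathcal{L}_{M_2}$ that are totally split in $M_{12}\to M_2$ therefore has Dirichlet density $1$. On the other hand, $\mathcal{L}$ is stably Chebotarev, so $\mathcal{L}_{M_2}$ is Chebotarev. Remark \ref{rmk:stable} (the version of Lemma \ref{lma:densitytotallysplit} over $M_2$) together with Proposition \ref{prop:densityequiv} gives that this density equals $1/|H|$. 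Hence $|H|=1$.

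\textbf{Main obstacle.} The delicate point is applying Lemma \ref{lma:densitytotallysplit} and Proposition \ref{prop:densityequiv} over the base $M_2$ rather than $S^3$. This requires checking that the ordering induced by $\ell_{h_2}$ and the prime-number normalization of norms still satisfy the hypotheses of Proposition \ref{prop:densityequiv}; in particular, the comparison $\pi_C/\pi\sim\psi_C/\psi$ must hold for the norms $N(K')=N(h_2(K'))^{\deg K'}$. I would handle this by noting that knots of degree at least $2$ contribute a convergent amount, exactly as in the proof of Lemma \ref{lma:density}. As a result, all densities can be computed using only the degree-1 knots, whose norms are primes, each prime occurring at most $\deg h_2$ times, and for these the prime number theorem applies as in $S^3$.
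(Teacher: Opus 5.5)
Your proposal is correct and follows essentially the same route as the paper: pass to the compositum $M_{12}$, show that degree-one knots of $\mathcal{L}_{M_2}$ lying over unramified knots split completely in the normal cover $M_{12}\to M_2$, and use Lemma \ref{lma:density} to get Dirichlet density $1$ for these knots, which forces $M_{12}=M_2$. The differences are cosmetic: you verify the splitting via $D_{\overline{K}|K}\subset\Gal(M_1,\mathcal{L}_{M_1})\cap\Gal(M_2,\mathcal{L}_{M_2})=\Gal(M_{12},\mathcal{L}_{M_{12}})$, whereas the paper uses the inequality $f_{K_{12}|K}\le f_{K_1|K}f_{K_2|K}$, and you make explicit the final comparison with the density $1/|H|$ coming from Remark \ref{rmk:stable} and Proposition \ref{prop:densityequiv}, which the paper leaves implicit.
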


\begin{proof}
Let $M_{12}$ be the compositum of $M_1$ and $M_2$ over $S^3$, let $G = \pi_1(S^3 \setminus L_{12})$ where $L_{12}$ is the union of the ramification loci, and let $H_i = \Gal(M_{12} \to M_i)$. We will show that almost all knots in $\mathcal{L}_{M_2}$ totally split in $M_{12} \to M_2$, which will imply our theorem by Proposition \ref{prop:densityequiv}. 

\medskip

Let $K_2 \in h_2^{-1}(K)$, be a knot of degree 1 over $S^3$. Let $K_{12}$ be a knot above it in $M_{12}$.
Since for any composite cover and a knot in it over $K$ one has that $f_{K_{12}|K}\leq f_{K_1|K}f_{K_2|K}$, the assumption of the theorem gives us that $f_{K_{12}|K}=1$. On the other hand $f_{K_{12}|K}=f_{K_{12}|K_2}f_{K_{2}|K}$ and so $f_{K_{12}|K_2}=1$. Since $M_{12} \to M_2$ is normal, so $K_2$ totally splits in $M_{12} \to M_2$. By Lemma \ref{lma:density}, the degree-1 knots in $\mathcal{L}_{M_2}$ have Dirichlet density 1. So a Dirichlet density 1 of the knots in $\mathcal{L}_{M_2}$ totally split in $M_{12}$, thus $M_{12} = M_2$. 
\end{proof}

From the above we have the following two immediate corollaries:

\begin{corollary}\label{cor:normal}
Let $h: M \to S^3 \in \Cov(S^3, \mathcal{L})$. If every knot with a degree-1 lift in $M$ is totally split in $M$, then $h$ is a normal covering.
\end{corollary}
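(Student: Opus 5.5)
The plan is to compare $M$ with its Galois closure and apply Theorem \ref{thm:subcover}. Let $\widetilde{h}: \widetilde{M} \to S^3$ be the Galois closure of $h$ in $\Cov(S^3, \mathcal{L})$, that is, the minimal normal cover factoring through $h$, whose group is $\Gal(h)$. It is still branched over the same finite sublink, since it is the Fox completion of the cover of $S^3 \setminus L$ corresponding to the normal core of $h_*\pi_1(M \setminus h^{-1}(L))$. We have $\deg h \le \deg \widetilde{h}$. The aim is to show that $\widetilde{M}$ is a subcover of $M$. This forces $\deg \widetilde{h} \le \deg h$, so the two covers are equal and $h$ is normal.

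To apply Theorem \ref{thm:subcover} with $h_1 = \widetilde{h}$ and $h_2 = h$, we need the following: every $K \in \mathcal{L}$ with a degree-1 lift in $M$ is totally split in $\widetilde{M}$. By hypothesis, such a $K$ is totally split in $M$. This means $h^{-1}(K)$ has $\deg h$ components, and it also forces $K$ to be unramified, since the count $\sum e_i f_i = \deg h$ then leaves no room for $e > 1$. It remains to pass from total splitting in $M$ to total splitting in the Galois closure.

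We do this with the standard group-theoretic argument. Write $G = \Gal(h)$, let $H \le G$ be the subgroup corresponding to $M$, and let $D = D_{K'|K}$ for a lift $K'$ to $\widetilde{M}$. Since $K$ is unramified, $D$ is cyclic, generated by the image of the longitude (the Frobenius). The components of $h^{-1}(K)$ correspond to the double cosets $H \backslash G / D$, and the degree of the component at $HgD$ is $[D : D \cap g^{-1}Hg]$. Total splitting in $M$ means every one of these degrees equals $1$, i.e. $D \subset g^{-1}Hg$ for all $g \in G$. Hence $D$ lies in the core $\bigcap_g g^{-1}Hg$, which is trivial because $G$ is the group of the Galois closure. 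So $D = 1$, and $K$ is totally split in $\widetilde{M}$ (compare Remark \ref{lma:totsplit}).

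Theorem \ref{thm:subcover} then shows that $\widetilde{M}$ is a subcover of $M$, which completes the argument. The main obstacle is the decomposition-type bookkeeping in the previous paragraph. Specifically, we need to justify for branched covers of $S^3$ that components of $h^{-1}(K)$ over an unramified $K$ correspond to double cosets, with degrees given by the index formula. This follows from covering space theory applied to $\partial V_K$ and the boundary tori of the lifts, using the conjugacy class $[K]$ in $\pi_1(S^3 \setminus L)$ as in Remark \ref{lma:totsplit}(3). If one prefers to avoid this, the same conclusion follows from a simpler observation: if $D \ne 1$, then some conjugate of $D$ is not contained in $H$, which produces a lift of degree greater than $1$ and contradicts total splitting in $M$.
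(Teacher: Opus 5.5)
Your proof is correct and is exactly the argument the paper intends: the paper records this corollary as immediate from Theorem \ref{thm:subcover}, applied with $h_1$ the Galois closure $\widetilde{M}\to S^3$ and $h_2 = h$, followed by the degree comparison. The one step the paper leaves implicit is that a knot totally split in $M$ is totally split in $\widetilde{M}$, and your double-coset computation $D \subset \bigcap_{g} g^{-1}Hg = 1$ supplies it correctly.
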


The following is a topological analogue of the classical result that a finite Galois extension of a number field is determined by its set of completely split primes.

\begin{corollary}\label{cor:first}
If $h_1: M_1 \to S^3, h_2: M_2 \to S^3$ are two normal branched covers in $\Cov(S^3, \mathcal{L})$ whose sets of totally split knots in $\mathcal{L}$ completely coincide, then $M_1$ and $M_2$ are the same cover.
\end{corollary}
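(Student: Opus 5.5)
The plan is to apply Theorem \ref{thm:subcover} twice, once in each direction, and conclude that each cover is a subcover of the other. Since both $h_1$ and $h_2$ are normal, each satisfies the normality hypothesis of Theorem \ref{thm:subcover} in turn. The remaining work is to check the splitting hypothesis: every knot $K \in \mathcal{L}$ with a degree-$1$ lift in $M_2$ totally splits in $M_1$, and symmetrically.

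First I will show that, for a normal cover $h: M \to S^3$, a knot $K \in \mathcal{L}$ has a degree-$1$ lift in $M$ if and only if it is totally split in $M$. Totally split clearly implies degree $1$. For the converse, take a lift $K'$ of degree $1$. By normality, $\Gal(h)$ acts transitively on $h^{-1}(K)$, and all components have conjugate decomposition groups of the same order $ef$.

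For unramified $K$ (that is, $K$ outside the finite branch link), $e = 1$. Degree $1$ then means $f = 1$, so $D_{K'|K} = 1$ and the number of components is $|\Gal(h)|$, i.e. $K$ is totally split.

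For the finitely many ramified knots, the totally split condition can fail even when some lift has degree $1$. I will handle this by showing the hypothesis is unaffected. The cleanest route is to note that the proof of Theorem \ref{thm:subcover} only uses a Dirichlet-density-one set of degree-$1$ knots in $\mathcal{L}_{M_2}$. Excluding the finitely many knots lying over branch loci changes no density. So it suffices to verify the hypothesis for unramified $K$, and that holds by the previous paragraph together with the assumption that the totally split sets coincide. I expect this point to be the only real obstacle. If one wants literal adherence to the hypothesis of Theorem \ref{thm:subcover}, I would instead restate it with ``all but finitely many knots,'' which its proof supports verbatim.

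Applying Theorem \ref{thm:subcover} with $(h_1,h_2)$ gives $M_1$ as a subcover of $M_2$, and applying it with the roles swapped gives $M_2$ as a subcover of $M_1$. By the Galois correspondence inside the fixed universal cover $\overline{S^3_{\mathcal{L}}}$, the corresponding open subgroups $\Gal(M_1,\mathcal{L}_{M_1})$ and $\Gal(M_2,\mathcal{L}_{M_2})$ of $\Gal(S^3,\mathcal{L})$ contain each other, hence are equal. Therefore $M_1$ and $M_2$ are the same cover. A degree count gives the same conclusion: mutual subcovers have equal degree, so the factoring maps are isomorphisms of covers.
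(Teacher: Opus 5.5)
Your proposal is correct and is the paper's own argument: the paper states this as an immediate corollary of Theorem \ref{thm:subcover}, applied in both directions using that both covers are normal. For a ramified knot, a degree-$1$ lift need not mean totally split, and the paper leaves this implicit. Your handling is a valid way to fill that gap, because the proof of Theorem \ref{thm:subcover} only uses a Dirichlet-density-one set of unramified degree-$1$ knots, and dropping finitely many knots does not change this.
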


\section{Local-global principles for links in 3-manifolds}\label{sec:Local-Global}

In this section, we prove important cohomological results which will be crucial for our proof of a local correspondence between profinite decomposition groups, akin to the same results in the proof of the Neukirch--Uchida theorem. Notably, we only need to deal in the first cohomology, bypassing the need for a Poitou--Tate type result. For the remainder of this section, fix a stably Chebotarev link $\mathcal{L} \subset S^3$ and a covering $h: M \to S^3 \in \Cov(S^3, \mathcal{L})$. Let $p$ be a prime number. Our central results are an injection
\begin{equation}
    \phi^1: H^1(\Gal(M, \mathcal{L}_M), \mathbb{F}_p) \to \prod_{K \in \mathcal{L}'}H^1(D_{\overline{K}|K}, \mathbb{F}_p)
\end{equation}
for some $\mathcal{L}' \subset \mathcal{L}_M$ containing all but finitely many knot components, and a surjection
\begin{equation}
   \psi_1: H^1(\Gal(M, \mathcal{L}_M), \mathbb{F}_p) \twoheadrightarrow \prod_{K \in L}H^1(D_{\overline{K}|K}, \mathbb{F}_p)
\end{equation}
where $L$ contains finitely many components.

\begin{lemma}\label{lma:injective}[Theorem \ref{thm:maininjective}]
Let $h: M \to S^3 \in \Cov(S^3, \mathcal{L})$. Suppose we have a subset $\mathcal{L}' \subset \mathcal{L}_M$ which contains all but finitely many knot components. Let $p$ be a prime number. Then the natural restriction map
\begin{equation}
    \phi^1: H^1(\Gal(M, \mathcal{L}_M), \mathbb{F}_p) \to \prod_{K \in \mathcal{L}'}H^1(D_{\overline{K}|K}, \mathbb{F}_p)
\end{equation}
is injective.
\end{lemma}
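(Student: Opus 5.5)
Since $\mathbb{F}_p$ carries the trivial action, $H^1(\Gal(M,\mathcal{L}_M),\mathbb{F}_p)=\mathrm{Hom}_{\mathrm{cont}}(\Gal(M,\mathcal{L}_M),\mathbb{F}_p)$, and likewise $H^1(D_{\overline{K}|K},\mathbb{F}_p)=\mathrm{Hom}(D_{\overline{K}|K},\mathbb{F}_p)$. The plan is therefore a Chebotarev argument, the analogue of the fact that a cyclic extension of a number field in which almost all primes split completely is trivial. Take a class $\chi$ in the kernel of $\phi^1$. By \eqref{eq:limcohom} and Proposition \ref{prop:Galoisgroups}, $\chi$ factors through $\widehat{\pi}_1(M\setminus L)$ for some finite sublink $L\subset\mathcal{L}_M$, hence through a surjection $\rho:\pi_1(M\setminus L)\to \im\chi\subseteq\mathbb{F}_p$. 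We must show that $\im\chi=0$.

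Suppose instead that $\chi\neq0$, so that $\rho$ is surjective onto $G=\mathbb{F}_p$. Let $h':N\to M$ be the associated cyclic branched cover of degree $p$, which lies in $\Cov(M,\mathcal{L}_M)$. Consider a knot $K\in\mathcal{L}'\setminus L$. The hypothesis $\chi|_{D_{\overline{K}|K}}=0$ says that $D_{\overline{K}|K}\subset\ker\chi=\Gal(N,\mathcal{L}_N)$, where $D_{\overline{K}|K}$ is computed relative to $M$. By Remark \ref{lma:totsplit}(2), applied over $M$ and using that $\ker\chi$ is normal so that conjugate decomposition groups also lie in it, $K$ is totally split in $h'$. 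Equivalently, $\rho([K])=0$: the Frobenius, i.e. the image of the longitude of $K$, lies in the image of $D_{\overline{K}|K}$, which is trivial.

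Since $\mathcal{L}'$ omits only finitely many knots, every knot of $\mathcal{L}_M\setminus L$ outside a finite set satisfies $\rho([K])=0$. However, $\mathcal{L}_M$ is Chebotarev, because $\mathcal{L}$ is stably Chebotarev. By Lemma \ref{lma:chebo}, applied to $M$ with the conjugacy class $C=\{1\}\subset\mathbb{F}_p$ (a nonzero element), there are infinitely many $K\in\mathcal{L}_M\setminus L$ with $\rho([K])=1\neq0$. This contradicts the previous paragraph, so $\chi=0$ and $\phi^1$ is injective.

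The main point needing care is the identification in the middle step. We need that the image of the absolute decomposition group $D_{\overline{K}|K}$ in the finite quotient $\Gal(h')$ is the finite decomposition group, generated by the image of $\pi_1(\partial V_K)$, which for unramified $K$ is the cyclic group generated by $\rho([K])$. This follows from the definition of $D_{\overline{K}|K}$ as a compatible system of stabilizers together with Theorem \ref{thm:profinitetorus}. We also need that Lemma \ref{lma:chebo} applies to $\mathcal{L}_M$ in $M$, rather than only to $\mathcal{L}$ in $S^3$; this is exactly what the stably Chebotarev hypothesis provides, with the induced ordering.
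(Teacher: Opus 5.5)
Your proof is correct and follows the paper's argument. You pass to the degree-$p$ cyclic cover cut out by a nonzero class in the kernel, observe that all but finitely many knots of $\mathcal{L}_M$ split completely in it, and contradict the Chebotarev property of $\mathcal{L}_M$ supplied by the stably Chebotarev hypothesis; the only difference is that the paper phrases the contradiction via the density $1/p$ of totally split knots (Lemma \ref{lma:densitytotallysplit} with Remark \ref{rmk:stable}), whereas you invoke Lemma \ref{lma:chebo} to produce infinitely many knots with nonzero Frobenius.
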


\begin{proof}
Take some nontrivial $\varphi: \Gal(M, \mathcal{L}_M) \to \mathbb{F}_p$ in $\ker(\phi^1)$. Then take the covering $h_\varphi: M_\varphi \to M$ corresponding to $\ker(\varphi)$, a subgroup of $\Gal(M, \mathcal{L}_M)$. Since $\varphi$ is nontrivial, this is a normal degree-$p$ covering. However, since $\varphi \in \ker(\phi^1)$, it follows that $\varphi|_{D_{\overline{K}|K}}$ is trivial for all $K \in \mathcal{L}'$. Note that if $K' \in h_\varphi^{-1}(K)$, $D_{K'|K} \cong \varphi(D_{\overline{K}|K}) = 1$. In other words, all but finitely many knots in $\mathcal{L}_M$ are totally split in the degree-$p$ cover $M_\varphi \to M$. By Lemma \ref{lma:densitytotallysplit} and Remark $\ref{rmk:stable}$, it follows that $h_\varphi$ is a trivial covering, contradicting the fact that $h_\varphi$ has degree $p$. There cannot be a nontrivial $\varphi \in \ker(\phi^1)$, so $\phi^1$ is injective. 
\end{proof}

\begin{corollary}\label{cor:geninjective}
Let $\mathcal{L} \subset S^3$ be a stably Chebotarev link, $G = \Gal(S^3, \mathcal{L})$, $H \subset G$ a closed subgroup with corresponding cover $h_H$, and $\mathcal{L}'_H \subset h_H^{-1}(\mathcal{L})$ a cofinite sublink. Then 
the natural map
\begin{equation}
    \varphi^1: H^1(H, \mathbb{F}_p) \rightarrow \prod_{J \in \mathcal{L}'_H}H^1(D_{\overline{J}|J} \cap U, \mathbb{F}_p)
\end{equation}
is injective.
\end{corollary}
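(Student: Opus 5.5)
We argue exactly as in Lemma \ref{lma:injective}, reading $U$ as $H$: the target of $\varphi^1$ is $\prod_{J}H^1(D_{\overline{J}|J}\cap H,\mathbb{F}_p)$, where $D_{\overline{J}|J}\cap H$ is the decomposition group of $\overline{J}$ inside $H$. We treat the open case first and then reduce a general closed $H$ to it by a limit argument.

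\textbf{Open $H$.} Then $h_H: M_H\to S^3$ lies in $\Cov(S^3,\mathcal{L})$, and $H=\Gal(M_H,\mathcal{L}_{M_H})$ by the Galois correspondence. Moreover $D_{\overline{J}|J}\cap H = D_{\overline{J}|J}$ computed in $H$. This case is therefore literally Lemma \ref{lma:injective} applied to $M=M_H$ with $\mathcal{L}'=\mathcal{L}'_H$.

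\textbf{General closed $H$.} Write $H=\bigcap_i U_i$ over the open subgroups $U_i\supseteq H$. Then $H^1(H,\mathbb{F}_p)=\varinjlim_i H^1(U_i,\mathbb{F}_p)$ by \eqref{eq:limcohom}. Take a nonzero $\chi\in\ker\varphi^1$. It is a continuous homomorphism $H\to\mathbb{F}_p$, so $\ker\chi$ is open in $H$; hence $\chi$ is the restriction of some $\chi_i$ on an open $U_i\supseteq H$, and we may choose $U_i$ with $U_i\cap\ker\chi_i$ meeting $H$ in exactly $\ker\chi$.

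We want every knot $J'$ of $\mathcal{L}_{M_{U_i}}$ lying under some $J\in\mathcal{L}'_H$ to satisfy $\chi_i|_{D_{\overline{J'}}\cap U_i}=0$. Unfortunately $D_{\overline J}\cap U_i$ can be strictly larger than $D_{\overline J}\cap H$, so this does not follow directly from $\chi|_{D_{\overline J}\cap H}=0$. To handle this, shrink $U_i$ further.

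\textbf{Shrinking $U_i$ (the main obstacle).} The difficulty is that $\mathcal{L}'_H$ is infinite, so we cannot simply shrink $U_i$ one knot at a time. Instead we use compactness. The set $\{g\in U_i : \chi_i(g)=0\}$ is an open subgroup $V$. The condition to achieve is then: for cofinitely many knots $J'$ of $M_{U_i}$, the group $D_{\overline{J'}}\cap U_i$ lies in $V$. Equivalently, cofinitely many knots of $M_{U_i}$ are totally split in the degree-$p$ cyclic cover $M_V\to M_{U_i}$.

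By the argument of Lemma \ref{lma:injective} (Lemma \ref{lma:densitytotallysplit} together with Remark \ref{rmk:stable}), this forces $V=U_i$, i.e.\ $\chi_i=0$ and hence $\chi=0$, a contradiction.

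To get the total splitting, I would argue as follows. A knot $J'$ of $M_{U_i}$ splits in $M_V$ iff its Frobenius lies in $V$. As $U_i$ shrinks towards $H$, the decomposition groups $D\cap U_i$ decrease to $D\cap H\subseteq\ker\chi$ for each fixed knot. If this fails for infinitely many knots at every level, one should try to extract a compatible inverse system of non-split knots through a K\"onig-type compactness argument. This system would give a knot of $\mathcal{L}'_H$ at which $\chi$ is nonzero, contradicting $\chi\in\ker\varphi^1$.

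I expect this compactness step to be the delicate one. If it resists, I would restrict the corollary to open $H$, which is the case needed later in Section \ref{sec:localcorr}.
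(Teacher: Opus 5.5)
Your open case is correct: it is literally Lemma \ref{lma:injective} applied to $M_H$, which is the only input the paper uses. The gap is the closed, non-open case. The K\"onig-type step you propose there cannot be completed, because in that generality the statement is false as written (granting the paper's own Corollary \ref{cor:intersection}).

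Here is a counterexample. Take $H=D_{\overline{K}|K}\cong\widehat{\mathbb{Z}}^2$ (Theorem \ref{thm:profinitetorus}). Let $J_0$ be the image of $\overline{K}$ in the pro-cover $h_H$, and set $\mathcal{L}'_H=h_H^{-1}(\mathcal{L})\setminus\{J_0\}$; removing all knots over $K$ works equally well.
\begin{itemize}
\item For any $J\neq J_0$, a lift $\overline{J}$ is a knot of the universal cover different from $\overline{K}$.
\item Pass to a finite level $U$ where the images of $\overline{J}$ and $\overline{K}$ are distinct. Corollary \ref{cor:intersection} gives $D_{\overline{J}}\cap D_{\overline{K}}\cap U=1$.
\item Hence $D_{\overline{J}}\cap H$ is a finite subgroup of $\widehat{\mathbb{Z}}^2$, so it is trivial.
\end{itemize}
So the target of $\varphi^1$ is $0$, while $H^1(H,\mathbb{F}_p)\cong\mathbb{F}_p^2$.

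The example also shows exactly where your argument breaks. Fix a nonzero $\chi$ and any open $U_i\supseteq H$.
\begin{itemize}
\item The image $J_0^{(i)}$ of $\overline{K}$ in $M_{U_i}$ is non-split in $M_V\to M_{U_i}$, since $D_{\overline{K}}\cap U_i=H\not\subseteq\ker\chi_i$.
\item $J_0^{(i)}$ does lie under knots of $\mathcal{L}'_H$, namely the images of $g\overline{K}$ for $g\in U_i\setminus H$.
\item Their local groups are $D_{g\overline{K}}\cap U_i=gHg^{-1}$, and $\chi_i(ghg^{-1})=\chi(h)$, so $\chi_i$ does not vanish on them.
\end{itemize}
So no shrinking of $U_i$ helps. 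The only compatible system of non-split knots is $(J_0^{(i)})_i$, and its limit is precisely the omitted knot. More generally, compactness is available only over a single base knot of $\mathcal{L}$, above which each $M_{U_i}$ has finitely many knots. The base knots carrying a non-split knot at level $i$ form a decreasing sequence of infinite sets. Nothing in your argument prevents their intersection from being empty, or from containing only base knots of omitted knots, as happens here.

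The paper's own proof is the one-line colimit argument: $H^1(H,\mathbb{F}_p)=\varinjlim_U H^1(U,\mathbb{F}_p)$, each $\varphi^1_U$ is injective, and direct limits are exact. Exactness only gives injectivity into $\varinjlim_U\prod_{J\in\mathcal{L}'_U}H^1(D_{\overline{J}|J}\cap U,\mathbb{F}_p)$. The comparison map from this colimit to $\prod_{J\in\mathcal{L}'_H}H^1(D_{\overline{J}|J}\cap H,\mathbb{F}_p)$ is not injective in general: colimits do not commute with infinite products, and the local groups shrink from $D\cap U$ to $D\cap H$. This is exactly the obstacle you identified, and the example shows it is fatal.

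Your fallback to open $H$ is therefore the correct true statement. However, your claim that the open case is all Section \ref{sec:localcorr} needs is wrong. Theorem \ref{thm:localcorrespondence} applies the corollary to $\Gal(\widetilde{M_G},\mathcal{L}_G)=G\cong\widehat{\mathbb{Z}}^2$, which is closed of infinite index: by Lemma \ref{lma:surjective}, open subgroups have infinite-dimensional $H^1(-,\mathbb{F}_p)$, so $G$ cannot be open. That application uses the full product over $\mathcal{L}_G$, so the counterexample above does not refute it directly. But neither your argument nor the paper's establishes it.
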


\begin{proof}
Since $H$ is the inverse limit of all open subgroups of $G$ containing it, we have by equation \ref{eq:limcohom}:
\begin{equation}
    H^1(H, \mathbb{F}_p) = \varinjlim_{H \subset U \subset_o G}H^1(U, \mathbb{F}_p)
\end{equation}
Each open $U \subset G$ containing $H$ corresponds to a finite branched cover $h_U: M_U \to S^3 \in \Cov(S^3, \mathcal{L})$. Since $\mathcal{L}' \subset h_H^{-1}(\mathcal{L})$ is cofinite, its image $\mathcal{L}'_U \subset h_U^{-1}(\mathcal{L})$ is cofinite, so Lemma \ref{lma:injective} gives an injection $\varphi^1_U: H^1(U, \mathbb{F}_p) \hookrightarrow \prod_{J \in \mathcal{L}'_U}H^1(D_{\overline{J}|J} \cap U, \mathbb{F}_p)$. For any $H \subset U' \subset U$, naturality of restriction in group cohomology gives a commutative square
\begin{equation}
\begin{tikzcd}[row sep=3em, column sep=6em]
H^1(U,\mathbb{F}_p)
  \arrow[r, hook, "\varphi^1_U"]
  \arrow[d, "\mathrm{Res}"']
&
\displaystyle\prod_{J \in \mathcal{L}'_U} H^1(D_{\overline{J}|J} \cap U,\,\mathbb{F}_p)
  \arrow[d, "\prod\,\mathrm{Res}"]
\\
H^1(U',\mathbb{F}_p)
  \arrow[r, hook, "\varphi^1_{U'}"']
&
\displaystyle\prod_{J \in \mathcal{L}'_{U'}} H^1(D_{\overline{J}|J} \cap U',\,\mathbb{F}_p)
\end{tikzcd}
\end{equation}
Since direct limit is an exact functor we get the result from Lemma \ref{lma:injective}.
\end{proof}

Let $A$ be a finite, self-dual module with a trivial action.

\begin{Def}
The \emph{local unramified cohomology group} at a knot $K$ is defined as the image of the \emph{inflation map}:
\begin{equation}
    H^1_{ur}(D_{\overline{K}|K}, A) = \im\left(H^1(D_{\overline{K}|K}/I_{\overline{K}|K}, A^{I_{\overline{K}|K}}) \to H^1(D_{\overline{K}|K}, A)\right)
\end{equation}
 and $A^{I_{\overline{K}|K}}$ is the submodule of $A$ fixed by the action of $I_{\overline{K}|K}$. The \emph{global unramified cohomology group} is the subgroup of global classes whose restrictions are locally unramified at every knot in $\mathcal{L}$:
\begin{equation}
    H^1_{ur}(M, \mathcal{L}, A) = \ker\left(H^1(\Gal(M, \mathcal{L}_M), A) \to \prod_{K \in \mathcal{L}_M}H^1(I_{\overline{K}|K}, A)\right)
\end{equation}
\end{Def}

Let $h: M \to S^3 \in \Cov(S^3, \mathcal{L})$, and define
\begin{equation}
    P^1(\Gal(M, \mathcal{L}_M), A) = \Rprod_{K \in \mathcal{L}_M}H^1(D_{\overline{K}|K}, A)
\end{equation}
where the above product is restricted with respect to $H^1_{ur}(D_{\overline{K}|K}, A)$.

\begin{lemma}\label{lma:localduality}
We have the following isomorphism, coming from the local duality for decomposition groups:
\begin{equation}
    \Xi^1: P^1(\Gal(M, \mathcal{L}_M), A) \to P^1(\Gal(M, \mathcal{L}_M), A)^\vee
\end{equation}
\end{lemma}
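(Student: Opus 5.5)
The plan is to build $\Xi^1$ knot by knot from a local duality pairing on each decomposition group, and then check that these pairings respect the restricted-product structure, so that they assemble via \eqref{eq:resprodDuality}.

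\textbf{Local structure.} By Theorem \ref{thm:profinitetorus}, each $D_{\overline{K}|K}\cong\widehat{\mathbb{Z}}^2\cong\widehat{\pi}_1(\partial V_K)$. The inertia group $I_{\overline{K}|K}$ is the closed subgroup $\widehat{\mathbb{Z}}\mu$, and the quotient $D_{\overline{K}|K}/I_{\overline{K}|K}\cong\widehat{\mathbb{Z}}\lambda$. Since $A$ is finite with trivial action,
\[
H^1(D_{\overline{K}|K},A)=\mathrm{Hom}(\widehat{\mathbb{Z}}^2,A)\cong A\oplus A,
\]
where the two coordinates are the values on $\mu$ and on $\lambda$. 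Inflation from $\widehat{\mathbb{Z}}\lambda$ is injective, so $H^1_{ur}(D_{\overline{K}|K},A)$ is the summand $\{\varphi:\varphi(\mu)=0\}\cong A$. The torus $\partial V_K$ is a closed orientable surface, so Poincaré duality applies, and goodness of $\mathbb{Z}^2$ identifies profinite and discrete cohomology. Together these give the cup-product pairing
\[
H^1(D_{\overline{K}|K},A)\times H^1(D_{\overline{K}|K},A)\xrightarrow{\ \cup\ } H^2(\widehat{\mathbb{Z}}^2,A\otimes A)\to H^2(\widehat{\mathbb{Z}}^2,\mathbb{Q}/\mathbb{Z})\cong\mathbb{Q}/\mathbb{Z}.
\]
Here the middle arrow uses the self-duality $A\cong A^\vee$, and the last isomorphism uses the fundamental class of the torus. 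In the coordinates above the pairing is the determinant form $(a,b)\cdot(a',b')=\langle a,b'\rangle-\langle b,a'\rangle$. It is perfect because the self-duality pairing on $A$ is perfect. So we obtain local isomorphisms $\xi_K:H^1(D_{\overline{K}|K},A)\xrightarrow{\sim}H^1(D_{\overline{K}|K},A)^\vee$.

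\textbf{Compatibility with the restricted product.} The unramified subgroup $\{(0,b)\}$ pairs trivially with itself under the determinant form, and it is of the correct size ($|A|$ out of $|A|^2$). Hence it is its own exact annihilator:
\[
H^1_{ur}(D_{\overline{K}|K},A)^{\perp}=H^1_{ur}(D_{\overline{K}|K},A).
\]
Equivalently, $\xi_K$ carries $H^1_{ur}$ isomorphically onto $\bigl(H^1(D_{\overline{K}|K},A)/H^1_{ur}(D_{\overline{K}|K},A)\bigr)^\vee$.

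\textbf{Assembling.} All the local groups are finite and discrete, and the subgroups $H^1_{ur}$ are compact open, so \eqref{eq:resprodDuality} gives
\[
P^1(\Gal(M,\mathcal{L}_M),A)^\vee\cong\Rprod_{K\in\mathcal{L}_M}\Bigl(H^1(D_{\overline{K}|K},A)^\vee,\ \bigl(H^1/H^1_{ur}\bigr)^\vee\Bigr).
\]
Define $\Xi^1=\prod_K\xi_K$. By the previous step, almost every coordinate of an element of $P^1$ lies in $H^1_{ur}$, and $\xi_K$ sends such a coordinate into $(H^1/H^1_{ur})^\vee$. So $\Xi^1$ maps $P^1$ into the right-hand side, and it has the inverse $\prod_K\xi_K^{-1}$ with the same property. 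This gives the isomorphism. Continuity is clear because each $\xi_K$ is an isomorphism of finite groups and $\Xi^1$ preserves the open subgroup $\prod_K H^1_{ur}$.

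\textbf{Main obstacle.} I expect the delicate step to be the self-annihilation of the unramified subgroup. It depends on the chosen identification $D_{\overline{K}|K}\cong\widehat{\pi}_1(\partial V_K)$ sending the inertia group exactly to $\widehat{\mathbb{Z}}\mu$, and on the self-duality of $A$ being a perfect pairing to $\mathbb{Q}/\mathbb{Z}$. I would first verify both points in the explicit $A\oplus A$ coordinates above, and only then assemble the global map.
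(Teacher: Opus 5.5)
Your proposal is correct and follows the paper's proof: it uses the perfect local cup-product pairing on $D_{\overline{K}|K}\cong\widehat{\mathbb{Z}}^2$, shows $H^1_{ur}$ is self-annihilating because $\chi_1(\mu)\chi_2(\lambda)-\chi_1(\lambda)\chi_2(\mu)$ vanishes plus the count $|A|$ out of $|A|^2$, and assembles via \eqref{eq:resprodDuality}. Your version is slightly more explicit than the paper's, which cites Poincar\'e-group duality only for $\mathbb{F}_p$ coefficients: you write the determinant form for a general self-dual $A$, and you check directly that each $\xi_K$ carries $H^1_{ur}$ onto $(H^1/H^1_{ur})^\vee$.
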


\begin{proof}
 The decomposition group $D_{\overline{K}|K} \cong \widehat{\mathbb{Z}}^2$ is a Poincar\'e group of dimension 2 at every prime $p$ (see for example \cite{serre} section 4.5). By the duality isomorphism for Poincar\'e groups (3.4.6 of \cite{neukirch2013cohomology}), the cup product pairing $H^1(D_{\overline{K}|K}, \mathbb{F}_p) \times H^1(D_{\overline{K}|K}, \mathbb{F}_p) \to H^2(D_{\overline{K}|K}, \mathbb{F}_p) \cong \mathbb{F}_p$ is nondegenerate. To establish the self-duality of the
restricted product $P^1$, we must show that $H^1_{ur}$ is its own orthogonal complement under the Poincar\'e duality pairing. Let $D_{\overline{K}|K} = \langle \mu, \ell \rangle$ with inertia subgroup $I_{\overline{K}|K} = \langle \mu \rangle$. By definition, $\chi$ is an element of the unramified cohomology group if and only if $\chi(\mu) = 0$. For $\chi_1, \chi_2 \in H^1_{ur}$, we have
\begin{equation}
    \chi_1 \cup \chi_2(\mu \wedge \ell) = \chi_1(\mu)\chi_2(\ell) - \chi_1(\ell)\chi_2(\mu) = 0
\end{equation}
Thus, $H^1_{ur} \subset (H^1_{ur})^\perp$. Furthermore, we have $|H^1| = |A|^2$, $|H^1_{ur}| = |A|$, and by Poincar\'e duality we have $|(H^1_{ur})^\perp| = |A|^2/|A| = |A|$. So $H^1_{ur} = (H^1_{ur})^\perp$. So, by equation \ref{eq:resprodDuality} we are done.
\end{proof}

In order to demonstrate a surjective map, we need the following theorem, which can be viewed as a step toward a Poitou--Tate duality, using a finite module with trivial action.

\begin{lemma}\label{lma:lefschetz}
Let $X = M \setminus \bigcup_{K \in L}V_K^{\circ}$ be a compact oriented 3-manifold with boundary $\partial X = \bigcup_{K \in L}\partial V_K$ a finite disjoint union of $\pi_1$-injective tori. Let $A$ be a finite self-dual module with trivial action. Then the sequence
\begin{equation}\label{eq:58}
    H^1(\Gal(M, L); A) \to \prod_{K \in L}H^1(D_{\overline{K}|K}; A) \to H^1(\Gal(M, L); A)^\vee
\end{equation}
is exact. The first map is given by $\loc_L$, the second map is given by $(\loc_L)^\vee \circ \Xi^1$, and $\Xi^1$ is the Poincar\'e duality pairing on $\partial X$.
\end{lemma}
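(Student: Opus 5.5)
The plan is to reduce the statement to the classical long exact sequence of the pair $(X,\partial X)$ together with Lefschetz duality, and then translate into profinite group cohomology using goodness. Here $\Gal(M,L)$ is read as $\widehat{\pi}_1(M\setminus L)\cong\widehat{\pi}_1(X)$. Since $A$ is finite with trivial action, $H^1(\Gal(M,L);A)=\mathrm{Hom}(\widehat{\pi}_1(X),A)=\mathrm{Hom}(\pi_1(X),A)=H^1(X;A)$. Because $X$ is aspherical with toral boundary, it is good (Theorem 3.5.1 of \cite{3groups}), although in degree one this identification is elementary.

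For each $K\in L$, the $\pi_1$-injectivity hypothesis combined with separability (as in the proof of Theorem \ref{thm:profinitetorus}) gives that the closure of $\pi_1(\partial V_K)$ in $\widehat{\pi}_1(X)$ is $\widehat{\pi}_1(\partial V_K)\cong\widehat{\mathbb{Z}}^2$. This is the decomposition group $D_{\overline{K}|K}$ at the finite level $L$. Hence $H^1(D_{\overline{K}|K};A)=H^1(\partial V_K;A)$, and $\prod_{K\in L}H^1(D_{\overline{K}|K};A)=H^1(\partial X;A)$. Under these identifications $\loc_L$ is the restriction $i^*:H^1(X;A)\to H^1(\partial X;A)$.

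Next I would write down the exact segment
\[
H^1(X;A)\xrightarrow{i^*}H^1(\partial X;A)\xrightarrow{\delta}H^2(X,\partial X;A)
\]
of the pair $(X,\partial X)$. Lefschetz duality for the compact oriented $3$-manifold $X$ gives $H^2(X,\partial X;A)\cong H_1(X;A)$. By the universal coefficient theorem over the finite module, together with the self-duality of $A$ (fix an isomorphism $A\cong A^\vee$), we get $H_1(X;A)\cong H^1(X;A^\vee)^\vee\cong H^1(X;A)^\vee$. It remains to check that under these identifications $\delta$ coincides with $(i^*)^\vee\circ\Xi^1$, where $\Xi^1:H^1(\partial X;A)\to H^1(\partial X;A)^\vee$ is the Poincaré duality pairing on the closed surface $\partial X$, a disjoint union of tori (Lemma \ref{lma:localduality} at each $K$).

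This compatibility is the main obstacle. I would prove it by the standard naturality of cap products with fundamental classes, using $\partial[X]=[\partial X]$. For $a\in H^1(\partial X;A)$ and $b\in H^1(X;A)$ one has
\[
\langle \delta a\cup b,[X,\partial X]\rangle=\pm\langle a\cup i^*b,[\partial X]\rangle ,
\]
which follows from the identity $\delta(a\cup i^*b)=\delta a\cup b$ and from the fact that the connecting map sends $[X,\partial X]$ to $[\partial X]$. The left side is the Lefschetz pairing of $\delta a$ with $b$; the right side is $\Xi^1(a)(i^*b)=((i^*)^\vee\Xi^1(a))(b)$. So $\delta$ agrees with $(i^*)^\vee\circ\Xi^1$ up to a sign, which does not affect exactness. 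In this pairing I use cup products $A\times A\to A\otimes A\to\mathbb{Q}/\mathbb{Z}$, induced by the self-duality of $A$.

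Since the Lefschetz isomorphism is injective, $\ker\delta=\ker((i^*)^\vee\circ\Xi^1)$. Topological exactness gives $\ker\delta=\im i^*$, hence the sequence \eqref{eq:58} is exact at the middle term. As a sanity check, the proof should also recover the standard "half lives, half dies" statement: $\im i^*$ is maximal isotropic in $H^1(\partial X;A)$.
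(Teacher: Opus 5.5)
Your proposal is correct and is essentially the paper's proof: the exact segment of the pair $(X,\partial X)$, Lefschetz duality, the identification of $\delta$ with $(\iota^*)^\vee\circ\Xi^1$ via the projection formula and $\partial[X,\partial X]=[\partial X]$, and then transfer to profinite cohomology. The only real difference is in that last step. You use the elementary degree-one identification $H^1(-;A)=\mathrm{Hom}(-,A)$ together with separability of the peripheral subgroups, whereas the paper appeals to Wilkes' good-pair theorem; your route is more self-contained and suffices here (your asphericity remark is not needed, and it can fail if $M$ is reducible).
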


\begin{proof}
Since $X$ is a compact oriented 3-manifold with boundary, Lefschetz duality gives an isomorphism $\Phi: H^2(X, \partial X; A) \to H^1(X; A)^\vee$ defined by $\Phi(\beta)(\alpha) = \langle \alpha \cup \beta, [X, \partial X] \rangle$, using $A \cong A^\vee$. The long exact sequence of the pair $(X, \partial X)$ gives exactness of
\begin{equation}\label{eq:61}
    H^1(X; A) \xrightarrow{\iota^*} H^1(\partial X; A) \xrightarrow{\delta} H^2(X, \partial X; A)
\end{equation}
We claim that $\Phi \circ \delta = (\iota^*)^\vee \circ \Xi^1$. Indeed, for $\alpha \in H^1(X; A)$ and $\beta \in H^1(\partial X, A)$
\begin{equation}
    \Phi(\delta\beta)(\alpha) = \Xi^1(\beta, \iota^*\alpha) = \langle \alpha \cup \delta\beta, [X, \partial X] \rangle = \langle \iota^*\alpha \cup \beta, [\partial X] \rangle
\end{equation}
where the middle equality is the projection formula relating the connecting homomorphism $\delta$ to the restriction $\iota^*$, and the last equality holds because $\partial X = \bigcup_K\partial V_K$ decomposes into pairing into local Poincar\'e duality on each torus. By exactness of \ref{eq:61}, $\ker((\iota^*)^\vee \circ \Xi^1) = \ker(\Phi \circ \delta) = \ker(\delta) = \text{im}(\iota^*)$. To pass to profinite cohomology, we invoke Theorem 6.21 of \cite{wilkes}, which establishes that $(\pi_1(X), \{\pi_1(\partial V_K)\}_{K \in L})$ is a good pair (Definition 6.1 of Wilkes \cite{wilkes}). The comparison from topological to profinite cohomology is a map of long exact sequences (see the diagram preceding Definition 6.1 of Wilkes \cite{wilkes}), so it intertwines $\iota^*$ with $\text{loc}_L$ and identifies $H^1(\partial V_K; A)$ with $H^1(D_{\overline{K}|K}; A)$ compatibly with these maps. The topological exactness established above therefore gives $\ker((\text{loc}_L)^\vee \circ \Xi^1) = \text{im}(\text{loc}_L)$, which is exactness of \ref{eq:58}.
\end{proof}

We use this lemma to prove the exact sequence from which a local-global principle for surjectivity will follow.

\begin{theorem}\label{thm:exact}
Let $h: M \to S^3 \in \Cov(S^3, \mathcal{L})$, and let $A$ be a finite module with trivial action. The sequence
\begin{equation}
    H^1(\Gal(M, \mathcal{L}_M), A) \to P^1(\Gal(M, \mathcal{L}_M), A) \to H^1(\Gal(M, \mathcal{L}_M), A)^\vee
\end{equation}
is exact. Denote the first map by $\loc_{\mathcal{L}}$; the second map is given by $(\loc_\mathcal{L})^\vee \circ \Xi^1$.
\end{theorem}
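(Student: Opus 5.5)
The plan is to deduce Theorem \ref{thm:exact} from the finite-level statement Lemma \ref{lma:lefschetz} by a direct limit over finite sublinks. For each finite $L \subset \mathcal{L}_M$, Corollary \ref{cor:incompressible2} lets us replace $L$ by a larger finite $L'$ whose boundary tori are all $\pi_1$-injective; such $L'$ are cofinal among finite sublinks. For these $L'$, Lemma \ref{lma:lefschetz} gives exactness of
\[
H^1(\widehat{\pi}_1(M\setminus L');A)\to \prod_{K\in L'}H^1(D_{\overline{K}|K};A)\to H^1(\widehat{\pi}_1(M\setminus L');A)^\vee .
\]
By Proposition \ref{prop:Galoisgroups} and equation \ref{eq:limcohom}, $H^1(\Gal(M,\mathcal{L}_M),A)=\varinjlim_{L'}H^1(\widehat{\pi}_1(M\setminus L'),A)$. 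Pontryagin duality then turns the dual of this direct limit into the inverse limit of the duals.

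The key steps are as follows. First I show that $\loc_{\mathcal{L}}$ actually lands in $P^1$. A class $\alpha$ comes from some finite level $L'$, meaning it is a homomorphism from $\pi_1(M\setminus L')$. Its restriction to any $K\notin L'$ kills the meridian $\mu_K$, because $\mu_K$ is trivial in $\pi_1(M\setminus L')$. So $\alpha$ is unramified at almost all $K$.

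Next I check that the composite of the two maps is zero. For $\alpha,\beta$ global, the pairing $\sum_K \langle \loc_K\alpha\cup\loc_K\beta\rangle$ reduces to a finite sum over a common level $L'$, since unramified classes pair trivially by Lemma \ref{lma:localduality}. This finite sum vanishes by Lemma \ref{lma:lefschetz}: the sum of the boundary pairings of two global classes is zero.

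For the reverse inclusion, take $x=(x_K)\in P^1$ whose pairing with every global class vanishes. I pick a finite $L'$ containing all $K$ with $x_K$ ramified, and write $x=x_{L'}+x^{ur}$, where $x^{ur}$ is supported off $L'$. The goal is to find a global $\alpha$ with $\loc\alpha=x$. The natural idea is to enlarge $L'$ to absorb the data and apply Lemma \ref{lma:lefschetz} at a finite level.

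The main obstacle is the unramified tail $x^{ur}$. It consists of infinitely many independent local unramified classes, namely arbitrary values on the longitudes $\lambda_K$. A global class is determined at a finite level $L''$, and at that level the restriction to $K\notin L''$ is forced: it equals $\alpha(\lambda_K)$ computed via linking numbers with $L''$. So exactness in full generality appears to require a compatibility that the finite-level lemma does not supply.

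To handle this, I would first try to show that the annihilation condition forces $x^{ur}$ to be determined by finitely many data. Concretely, I would pair $x$ against the global classes dual to the meridians $\mu_{K''}$ for $K''$ outside $L'$; such classes exist at a level containing $K''$. This should force $x_K(\lambda_K)=\sum_{J}\lk(K,J)\,c_J$ for finitely many constants $c_J$. One then reads off a global class from the $c_J$ and the components $x_{L'}$, and applies Lemma \ref{lma:lefschetz} to the difference.

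If this compatibility argument fails, I would fall back to proving exactness at the finite-level terms of the limit. That restricted statement is all that is needed for the surjectivity in Theorem \ref{thm:mainsurjective}.
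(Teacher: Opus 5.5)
Your first two steps are correct and match the paper's Claim~1. Every global class is inflated from a finite level, hence unramified almost everywhere. For two global classes the pairing reduces, by self-orthogonality of $H^1_{ur}$ (Lemma~\ref{lma:localduality}), to a finite boundary sum that Lemma~\ref{lma:lefschetz} kills.

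The gap is the reverse inclusion $\ker\big((\loc_{\mathcal{L}})^\vee\circ\Xi^1\big)\subseteq\im(\loc_{\mathcal{L}})$, which you do not prove. Your linking-number mechanism is only sketched (``should force''), and it only works for $S^3$:
\begin{itemize}
\item for a general branched cover $M$, the knots of $\mathcal{L}_M$ need not be null-homologous, so linking numbers are not available;
\item $H^1(M,A)$ contributes global classes that are unramified everywhere;
\item a class equal to $1$ on $\mu_{K''}$ and $0$ on all other meridians need not exist, since in $H_1(M\setminus L)$ the meridians satisfy relations $\sum_{K}(S\cdot K)\,\mu_K=0$ for $S\in H_2(M)$.
\end{itemize}
Your fallback proves a strictly weaker, finite-level statement, so it does not establish the theorem as stated.

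What is missing is a finiteness argument. With it, the finite-level lemma does supply the compatibility you thought was lacking.
\begin{itemize}
\item Let $L_0$ be the finite set where $x$ is ramified. Take a finite $L\supseteq L_0$ with $\pi_1$-injective boundary tori and any $y\in H^1(\widehat{\pi}_1(M\setminus L),A)$.
\item The terms $K\notin L$ of $\langle x,\loc_{\mathcal{L}}(\mathrm{inf}\,y)\rangle$ pair two unramified classes and vanish. Hence $\pi_L(x)\perp\im(\loc_L)$, and Lemma~\ref{lma:lefschetz} gives $y_L$ with $\loc_L(y_L)=\pi_L(x)$.
\item Since $x_K$ is unramified for $K\in L\setminus L_0$, $y_L$ kills those meridians. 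So $y_L$ is inflated from the \emph{fixed finite} group $H^1(\widehat{\pi}_1(M\setminus L_0),A)$.
\item Choose an exhausting chain $L_0\subseteq L_1\subseteq\cdots$ of such links, enlarging via Corollary~\ref{cor:incompressible2}. The sets $Z_i=\{z\in H^1(\widehat{\pi}_1(M\setminus L_0),A) : \loc_{\mathcal{L}}(\mathrm{inf}\,z)_K=x_K \text{ for all } K\in L_i\}$ are nonempty and decreasing inside a finite set.
\item So they share some $z$, and then $\loc_{\mathcal{L}}(\mathrm{inf}\,z)=x$.
\end{itemize}

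The paper reaches the same conclusion through its Claims~2 and~3. Claim~2 is properness of $\loc_{\mathcal{L}}$: classes unramified outside a finite set form a finite group. Claim~3 applies Lemma~\ref{lma:lefschetz} to $\pi_L(x)$ for every $L$ containing the ramification of $x$. The tail values $x_K(\lambda_K)$ are pinned down by orthogonality against level-$L$ classes ramified at the new knots of $L$. This is the correct general form of your meridian-dual idea.
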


\begin{proof}
By Corollary \ref{cor:incompressible2}, for any finite sublink $L$ we can add components so that all boundary components are $\pi_1$-injective; Lemma \ref{lma:lefschetz} then gives the desired exact sequence. We now pass to the full restricted product $P^1 = P^1(\Gal(M, \mathcal{L}_M), A)$ over all of $\mathcal{L}_M$. The exactness of this sequence is equivalent to showing that $\im(\loc_{\mathcal{L}})$ equals its own orthogonal complement in $P^1$ under the pairing
\begin{equation}
    P^1 \times P^1 \rightarrow \bigoplus H^2(D_{\overline{K}|K}, A) \to \mathbb{Q}/\mathbb{Z}
\end{equation}
where the first map is the componentwise cup product and the second map is summation.

\medskip

\underline{Claim 1: $\im(\loc_{\mathcal{L}}) \subset \im(\loc_{\mathcal{L}})^\perp$.}

\medskip

Given two classes $x, y \in H^1(\Gal(M, \mathcal{L}_M), A)$, the local cup product $(\loc_{\mathcal{L}}(x))_K \cup (\loc_{\mathcal{L}}(y))_K \in H^2(D_{\overline{K}|K}, A)$ can be nontrivial only if at least one of $x_{D_{\overline{K}|K}}$ or $y_{D_{\overline{K}|K}}$ lies in the ramified part of $H^1(D_{\overline{K}|K}, A)$ (follows from self-orthogonality of $H^1_{ur}$, i.e. Lemma \ref{lma:localduality}). Since $x$ and $y$ are globally unramified outside the finite ramification locus of $h$, this can only occur for finitely many $K$. Let $L \subset \mathcal{L}_M$ be a finite subset including all such $K$. Because $x$ and $y$ are unramified outside $L$, there exist classes $\overline{x}, \overline{y} \in H^1(\Gal(M, L), A)$ such that $x = \text{inf}(\overline{x})$ and $y = \text{inf}(\overline{y})$. Thus, the global cup product evaluates as:
\begin{equation}
    \loc_\mathcal{L}(x) \cup \loc_\mathcal{L}(y) = \pi_L(\loc_{\mathcal{L}}(x)) \cup \pi_L(\loc_{\mathcal{L}}(y)) = \loc_L(\overline{x}) \cup \loc_L(\overline{y}) = 0
\end{equation}
where the last equality uses Lemma \ref{lma:lefschetz}, giving $\im(\loc_{\mathcal{L}}) \subset \im(\loc_{\mathcal{L}})^\perp$.

\medskip

\underline{Claim 2: $\im(\loc_{\mathcal{L}})$ is closed in $P^1$.}

\medskip

We now show that $\loc_{\mathcal{L}}$ is proper, i.e. the preimage of any compact subset of $P^1$ is compact in $H^1(\Gal(M, \mathcal{L}_M), A)$. A compact neighbourhood of the identity in $P^1$ is contained in 
\begin{equation}
    P_L = \prod_{K \in L}H^1(D_{\overline{K}|K}, A) \times \prod_{K \in \mathcal{L}_M \setminus L}H^1_{ur}(D_{\overline{K}|K}, A)
\end{equation}
for some finite $L$. Since $H^1(\Gal(M, \mathcal{L}_M), A)$ is discrete, it suffices to show that $\loc_{\mathcal{L}}^{-1}(P_L)$ is finite. Any $x \in \loc_{\mathcal{L}}^{-1}(P_L)$ satisfies $x|_{I_{\overline{K}|K}} = 0$ for all $K \notin L$, hence factors through the maximal quotient of $\Gal(M, \mathcal{L}_M)$ unramified outside $L$, which is $\Gal(M, L) = \widehat{\pi}_1(M \setminus L)$ by definition. Via the inflation-restriction exact sequence:
\begin{equation}
    0 \to H^1(\Gal(M, L), A) \to_{\text{inf}} H^1(\Gal(M, \mathcal{L}_M), A)
\end{equation}
we have $\loc_{\mathcal{L}}^{-1}(P_L) \subset \im(\text{inf})$ (particularly using that inflation is injective in $H^1$ for normal subgroups). Since the exterior of $L$ is a compact 3-manifold with boundary, $\pi_1(M \setminus L)$ is finitely presented, hence $\Gal(M, L)$ is topologically finitely generated, making $H^1(\Gal(M, L), A)$ finite. Therefore, $\loc_{\mathcal{L}}^{-1}(P_L)$ is finite, and $\loc_{\mathcal{L}}$ is proper. A proper map from a discrete space into a locally compact space has closed image, so $\im(\loc_{\mathcal{L}})$ is closed in $P^1$. 

\medskip

\underline{Claim 3: $\im(\loc_{\mathcal{L}})^\perp \subset \im(\loc_{\mathcal{L}})$.}

\medskip

Let $x \in P^1$ satisfy $x \cup \loc_{\mathcal{L}}(y) = 0$ for all $y \in H^1(\Gal(M, \mathcal{L}_M), A)$. For any finite $L$ containing the finite set of knots where $x$ is ramified, projecting to $P^1(\mathcal{L})$ gives $\pi_L(x) \in \im(\loc_L)^\perp = \im(\loc_L)$ by Lemma \ref{lma:lefschetz}. Here we use that every class in $H^1(\Gal(M, \mathcal{L}), A)$ is in the image of the inflation $H^1(\Gal(M, \mathcal{L}_M), A)$, so orthogonality against all global classes at the infinite level implies orthogonality against all classes at the finite level. Since $\im(\loc_{\mathcal{L}})$ is closed by Claim 2:
\begin{equation}
    x \in \bigcap_L\pi_L^{-1}(\im(\loc_L)) \subset \bigcap_L\pi_L^{-1}(\pi_L(\im(\loc_{\mathcal{L}}))) = \overline{\im(\loc_{\mathcal{L}})} = \im(\loc_{\mathcal{L}})
\end{equation}
The first inclusion uses that $\im(\loc_L)$ at the finite level equals $\pi_L(\im(\loc_{\mathcal{L}}))$, i.e. every finite-level element in $\im(\loc_L)$ lifts to a global element in $\im(\loc_{\mathcal{L}})$. The second equality is a standard topological fact. The third equality uses Claim 2.

\medskip

Claim 1 and 3 together give $\im(\loc_{\mathcal{L}}) = \im(\loc_{\mathcal{L}})^\perp$, which under the identification via $\Xi^1$ is equivalent to exactness of the stated sequence.
\end{proof}

\begin{lemma}\label{lma:surjective}[Theorem \ref{thm:mainsurjective}]
Let $h: M \to S^3 \in \Cov(S^3, \mathcal{L})$. Suppose we have a finite set of knots $L \subset \mathcal{L}_M$. Then the natural restriction map
\begin{equation}
   \psi_1: H^1(\Gal(M, \mathcal{L}_M), \mathbb{F}_p) \twoheadrightarrow \prod_{K \in L}H^1(D_{\overline{K}|K}, \mathbb{F}_p)
\end{equation}
is surjective.
\end{lemma}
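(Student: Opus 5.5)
The plan is to deduce surjectivity of $\psi_1$ by duality, combining the exact sequence of Theorem \ref{thm:exact} (with $A=\mathbb{F}_p$, which is finite, self-dual and has trivial action) with the injectivity statement of Lemma \ref{lma:injective}. This mirrors the classical derivation of a Grunwald--Wang type statement from Poitou--Tate together with a Hasse principle.

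\textbf{Reduction to an orthogonality statement.} Set $W=\prod_{K\in L}H^1(D_{\overline{K}|K},\mathbb{F}_p)$. This is a finite-dimensional $\mathbb{F}_p$-vector space, since each factor is $\mathbb{F}_p^2$ by Theorem \ref{thm:profinitetorus}. Equip $W$ with the sum of the local cup product pairings. By Lemma \ref{lma:localduality} (Poincar\'e duality for $\widehat{\mathbb{Z}}^2$), each local pairing is nondegenerate, so the pairing on $W$ is nondegenerate. Hence $\psi_1$ is surjective if and only if $\im(\psi_1)^\perp=0$ in $W$, and it suffices to show that any $y=(y_K)_{K\in L}\in W$ orthogonal to $\im(\psi_1)$ vanishes.

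\textbf{Embedding into $P^1$ and applying Theorem \ref{thm:exact}.} Given such $y$, extend it to $\tilde y\in P^1(\Gal(M,\mathcal{L}_M),\mathbb{F}_p)$ by setting $\tilde y_K=0$ for $K\notin L$. This is legitimate because $0\in H^1_{ur}$ and $L$ is finite, so $\tilde y$ lies in the restricted product. For any global class $z$, the pairing of $\tilde y$ with $\loc_{\mathcal{L}}(z)$ is
\[
\sum_{K\in L} y_K\cup z|_{D_{\overline{K}|K}},
\]
because the components outside $L$ vanish. This sum is exactly the pairing of $y$ with $\psi_1(z)$ in $W$, which is zero by hypothesis. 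Thus $\tilde y\in \im(\loc_{\mathcal{L}})^\perp$. By Claim 3 in the proof of Theorem \ref{thm:exact}, equivalently by exactness of the sequence there, $\tilde y=\loc_{\mathcal{L}}(x)$ for some $x\in H^1(\Gal(M,\mathcal{L}_M),\mathbb{F}_p)$.

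\textbf{Killing $x$ via the Hasse principle.} By construction, $x|_{D_{\overline{K}|K}}=\tilde y_K=0$ for every $K$ in the cofinite set $\mathcal{L}'=\mathcal{L}_M\setminus L$. Lemma \ref{lma:injective}, applied to $\mathcal{L}'$, then forces $x=0$. Consequently $y_K=x|_{D_{\overline{K}|K}}=0$ for all $K\in L$, so $y=0$, and $\psi_1$ is surjective.

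\textbf{Expected main obstacle.} The delicate point is the middle step. It requires the identification of the second map $(\loc_{\mathcal{L}})^\vee\circ\Xi^1$ in Theorem \ref{thm:exact} with ``orthogonal to $\im(\loc_{\mathcal{L}})$'' under the summed local pairing. It also requires checking that the finitely supported element $\tilde y$ really is handled by Claim 3. That means checking that $\tilde y$ is ramified at only finitely many knots, namely at most those of $L$, so that the finite-level Lefschetz argument of Lemma \ref{lma:lefschetz} applies after enlarging $L$ via Corollary \ref{cor:incompressible2} to get $\pi_1$-injective boundary tori. Once that identification is granted, the remaining steps are formal linear algebra over $\mathbb{F}_p$.
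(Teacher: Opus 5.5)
Your proposal is correct and uses the same three inputs as the paper's proof: the self-orthogonality of $\im(\loc_{\mathcal{L}})$ from Theorem \ref{thm:exact}, Lemma \ref{lma:injective} applied to the cofinite set $\mathcal{L}_M\setminus L$, and local Poincar\'e duality for $\widehat{\mathbb{Z}}^2$. The difference is only in packaging. The paper runs a snake-lemma diagram and kills $\coker(f)$ for $f\colon \Rprod_{K\notin L}H^1(D_{\overline{K}|K},\mathbb{F}_p)\to H^1(\Gal(M,\mathcal{L}_M),\mathbb{F}_p)^\vee$ by Pontryagin duality, whereas you dualize $\psi_1$ directly on the finite-dimensional space $W$ and extend by zero into $P^1$, which is slightly cleaner because it avoids the tacit closed-image assumption needed to identify $\coker(f)$ with the dual of a kernel.
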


\begin{proof}
By Theorem \ref{thm:exact}, and since the restriction map on all of $\mathcal{L}_M$ is injective by Lemma \ref{lma:injective}, we have the exact diagram 
\[
\begin{tikzcd}
    & 1 
        \arrow[r]
        \arrow[d]
    & \displaystyle\Rprod_{K \in \mathcal{L}_M \setminus L} H^{1}(D_{\overline{K}|K}, \mathbb{F}_p)
        \arrow[r, "\text{id}"]
        \arrow[d]
    & \displaystyle\Rprod_{K \in \mathcal{L}_M \setminus L} H^{1}(D_{\overline{K}|K}, \mathbb{F}_p)
        \arrow[r]
        \arrow[d, "f"]
    & 1 \\
1 
    \arrow[r]
& H^{1}(\Gal(M, \mathcal{L}_M), \mathbb{F}_p) 
    \arrow[r]
& \displaystyle P^1(\Gal(M, \mathcal{L}_M), \mathbb{F}_p)
    \arrow[r, "g"]
& H^{1}(\Gal(M, \mathcal{L}_M), \mathbb{F}_p)^\vee
\end{tikzcd}
\]
Here, $g = (\loc_{\mathcal{L}})^\vee \circ \Xi^1$ as in Theorem \ref{thm:exact}, and $f$ is a restriction of $g$. Because the quotient of the full restricted product by the product over $K \in \mathcal{L}_M \setminus L$ is isomorphic to the finite direct product over $L$, the snake lemma yields the exact sequence 
\begin{equation}\label{eq:snake}
    H^1(\Gal(M, \mathcal{L}_M), \mathbb{F}_p) \to \prod_{K \in L}H^1(D_{\overline{K}|K}, \mathbb{F}_p) \to \coker(f)
\end{equation}
By the definitions of $f$ and properties of the Pontryagin duality (equation \ref{eq:pontDual}), we have 
\begin{equation}
    \coker(f) \cong \ker\left(H^1(\Gal(M, \mathcal{L}_M), \mathbb{F}_p) \to (\Rprod_{\mathcal{L}_M \setminus L}H^1(D_{\overline{K}|K}, \mathbb{F}_p))^\vee\right)^\vee
\end{equation}
However, by Lemmas \ref{lma:injective} and \ref{lma:localduality}, this kernel is trivial, and so the cokernel of $f$ is also trivial. So, equation \ref{eq:snake} gives the required surjection.
\end{proof}

By the same proof of Corollary \ref{cor:geninjective} we get the following:
\begin{corollary}\label{cor:surjgeneral}
Let $\mathcal{L} \subset S^3$ be a Chebotarev link, and let $G = \Gal(S^3, \mathcal{L})$. Let $H \subset G$ be any closed subgroup with corresponding cover $h_H$, and let $L = \{K_1, \dots, K_r\} \subset h_H^{-1}(\mathcal{L})$ be a finite set. Then the restriction map
\begin{equation}
    \psi_1: H^1(H, \mathbb{F}_p) \to \prod_{K \in L}H^1(D_{\overline{K}|K} \cap H, \mathbb{F}_p)
\end{equation}
is surjective, where $D_{\overline{K}|K}$ is the decomposition group of $\overline{K}$ in the $H$ pro-cover. 
\end{corollary}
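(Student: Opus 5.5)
The plan is to reduce to the finite-level statement of Lemma \ref{lma:surjective} by writing $H^1(H,\mathbb{F}_p)$ as a direct limit over the open subgroups $U$ with $H \subset U \subset_o G$, following the template of Corollary \ref{cor:geninjective}. By equation \ref{eq:limcohom}, $H^1(H,\mathbb{F}_p) = \varinjlim_U H^1(U,\mathbb{F}_p)$. Each such $U$ corresponds to a finite cover $h_U: M_U \to S^3$ in $\Cov(S^3,\mathcal{L})$, and $U = \Gal(M_U,\mathcal{L}_{M_U})$. Each knot $K_i \in L$ maps to a knot $K_{i,U} \in h_U^{-1}(\mathcal{L})$. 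Because $L$ is finite and the images of distinct knots of $L$ eventually separate, there is a cofinal family of $U$ for which the $K_{i,U}$ are pairwise distinct. I will restrict attention to that cofinal family throughout.

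On the local side, I use the fixed compatible system $\overline{K_i}$. The decomposition group of $\overline{K_i}$ relative to $U$ is $D_{\overline{K_i}|K_{i,U}} = D_{\overline{K_i}} \cap U$. These form a decreasing system of closed subgroups of $D_{\overline{K_i}} \cong \widehat{\mathbb{Z}}^2$ (Theorem \ref{thm:profinitetorus}) with intersection $D_{\overline{K_i}} \cap H$. Applying \ref{eq:limcohom} again, now to these decomposition groups, gives $H^1(D_{\overline{K_i}} \cap H,\mathbb{F}_p) = \varinjlim_U H^1(D_{\overline{K_i}} \cap U,\mathbb{F}_p)$. Since $L$ is finite, the product commutes with the direct limit, so $\prod_{K \in L} H^1(D_{\overline{K}|K} \cap H,\mathbb{F}_p) = \varinjlim_U \prod_{i} H^1(D_{\overline{K_i}} \cap U,\mathbb{F}_p)$.

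For each $U$ in the cofinal family, Lemma \ref{lma:surjective} applied to $M_U$ and the finite set $\{K_{i,U}\}$ gives a surjection $\psi_{1,U}: H^1(U,\mathbb{F}_p) \twoheadrightarrow \prod_i H^1(D_{\overline{K_i}} \cap U,\mathbb{F}_p)$. These maps commute with restriction along $U' \subset U$ by naturality of restriction, giving the same commutative square as in Corollary \ref{cor:geninjective} with surjections in place of injections. Since filtered direct limits are exact, $\psi_1 = \varinjlim_U \psi_{1,U}$ is surjective.

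The main obstacle is the bookkeeping at the finite levels. First, one must make sure that distinct knots of $L$ really have distinct images in a cofinal family of $U$, so that Lemma \ref{lma:surjective} applies to a set of distinct knots; otherwise two factors of the product would be restrictions of the same local class and the product map could fail to be surjective. Second, one must identify $D_{\overline{K_i}} \cap U$ with the absolute decomposition group of $\overline{K_i}$ over $K_{i,U}$ in $\Gal(M_U,\mathcal{L}_{M_U})$, so that Lemma \ref{lma:surjective} literally applies. To handle the first point, I would argue that $H$ is the intersection of the $U$, so distinct knots in the $H$ pro-cover are already separated at some finite level.
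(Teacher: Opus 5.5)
Your proposal is correct and follows the paper's route: the paper proves this corollary ``by the same proof of Corollary \ref{cor:geninjective},'' i.e.\ by writing $H^1(H,\mathbb{F}_p)$ as $\varinjlim_{U \supseteq H} H^1(U,\mathbb{F}_p)$, applying Lemma \ref{lma:surjective} at each finite level, and using that filtered direct limits are exact. Your extra bookkeeping fills in details the paper leaves implicit. This includes passing to a cofinal family of $U$ on which the images of the $K_i$ are distinct; this holds because $\bigcap_{U \supseteq H} U g D = H g D$ by compactness. It also includes using finiteness of $L$ to commute the product with the direct limit.
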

We finish this section with a lifting property.
For this we need the following corollary of Lemma \ref{lma:surjective} which follows from Shapiro's lemma:
\begin{corollary}\label{cor:nsw924}
Let $\mathcal{L} \subset S^3$ be a Chebotarev link and $h: M \to S^3 \in \Cov(S^3, \mathcal{L})$ with $L \subset \mathcal{L}$ a finite set of components. Let $G = \Gal(h)$. Then the natural restriction map
\begin{equation}
    \psi_1': H^1(\Gal(S^3, \mathcal{L}), \mathbb{F}_p[G]) \to \prod_{K \in L} H^1(D_{\overline{K}|K}, \mathbb{F}_p[G])
\end{equation}
is surjective.
\end{corollary}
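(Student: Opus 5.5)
We deduce this from Lemma \ref{lma:surjective} by Shapiro's lemma, following the classical argument (NSW 9.2.4). Write $\mathcal{G} = \Gal(S^3,\mathcal{L})$ and $U = \Gal(M,\mathcal{L}_M)$. We may assume $h$ is normal, replacing $M$ by its Galois closure if necessary, since $\Gal(h)$ is by definition the group of the minimal Galois cover through $h$. Then $U$ is open normal in $\mathcal{G}$ with $\mathcal{G}/U \cong G$, and $\mathbb{F}_p[G] \cong \mathrm{Ind}_U^{\mathcal{G}}\mathbb{F}_p$ as $\mathcal{G}$-modules, the action being through $G$. Shapiro's lemma then gives
\begin{equation*}
H^1(\mathcal{G}, \mathbb{F}_p[G]) \cong H^1(U, \mathbb{F}_p).
\end{equation*}

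Next we identify the local terms. For each $K \in L$, restrict the induced module to $D_{\overline{K}|K}$ and apply the Mackey double coset formula:
\begin{equation*}
\mathrm{Res}_{D_{\overline{K}|K}}\mathrm{Ind}_U^{\mathcal{G}}\mathbb{F}_p \cong \bigoplus_{g \in D_{\overline{K}|K}\backslash \mathcal{G}/U} \mathrm{Ind}^{D_{\overline{K}|K}}_{D_{\overline{K}|K}\cap gUg^{-1}}\mathbb{F}_p.
\end{equation*}
Shapiro's lemma applied to each summand gives
\begin{equation*}
H^1(D_{\overline{K}|K}, \mathbb{F}_p[G]) \cong \prod_{g} H^1(g^{-1}D_{\overline{K}|K}g \cap U, \mathbb{F}_p).
\end{equation*}
The groups $g^{-1}D_{\overline{K}|K}g\cap U$ are exactly the decomposition groups $D_{\overline{K'}|K'}$ in $U$ of the knots $K' \in h^{-1}(K)$. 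The double cosets $D\backslash\mathcal{G}/U$ correspond bijectively to the components of $h^{-1}(K)$, and $g^{-1}\overline{K}$ is a point of the universal cover over the component indexed by $g$. So the right-hand side of $\psi_1'$ is $\prod_{K' \in h^{-1}(L)} H^1(D_{\overline{K'}|K'}, \mathbb{F}_p)$, where $h^{-1}(L)$ is a finite sublink of $\mathcal{L}_M$.

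The key step, and the main obstacle, is checking compatibility: under these identifications, $\psi_1'$ must become the restriction map $\psi_1$ of Lemma \ref{lma:surjective} for $M$ and the finite set $h^{-1}(L)$. This holds because Shapiro isomorphisms are natural with respect to restriction, with the Mackey decomposition supplying the commuting square. Concretely, on cochains a Shapiro class $f$ corresponds to $x \mapsto f(x)(1)$, and restricting to $D$ then projecting to the $g$-summand corresponds to conjugating by $g$ and restricting to $gUg^{-1}\cap D$. Conjugation by $g$ is an isomorphism of cohomology groups and does not affect surjectivity, so we can choose representatives freely and only need to match the components correctly. The surjectivity of $\psi_1'$ is then exactly Lemma \ref{lma:surjective} applied to $M$ and the finite set $h^{-1}(L)$.
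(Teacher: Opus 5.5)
Your proposal is correct and follows the same route as the paper, which derives this corollary from Lemma \ref{lma:surjective} via Shapiro's lemma, in the manner of NSW 9.2.4. Your Mackey decomposition, which identifies the local factors with $\prod_{K' \in h^{-1}(L)} H^1(D_{\overline{K'}|K'}, \mathbb{F}_p)$ for the Galois closure $M$, makes explicit the bookkeeping the paper leaves implicit. (Applying Lemma \ref{lma:surjective} to $M$ uses the section's standing stably Chebotarev hypothesis rather than just the Chebotarev property named in the statement; the paper's argument relies on it in the same way.)
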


The above allows us to show the following lifting property:

\begin{lemma}\label{lma:embdding}
Let $h: N \to S^3 \in \Cov(S^3, \mathcal{L})$ be a normal finite cover and take a prime number $p$. Given an exact sequence of finite groups as in the bottom row of the below diagram, there exists a surjective homomorphism $\psi: \Gal(S^3, \mathcal{L}) \to E$ such that 
\[
\begin{tikzcd}[column sep=large, row sep=large]
& & & \mathrm{Gal}(S^3, \mathcal{L}) \arrow[d] \arrow[dl, two heads, "\psi"'] \\[1em]
1 \arrow[r] 
& \mathbb{F}_p[G] \arrow[r] 
& E \arrow[r] 
& \Gal(h) \arrow[r]
& 1
\end{tikzcd}
\]
commutes. 
\end{lemma}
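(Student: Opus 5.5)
We follow the classical argument for embedding problems with kernel $\mathbb{F}_p[G]$, as in Neukirch--Schmidt--Wingberg (IX.2 / XII.2). Write $\mathcal{G} = \Gal(S^3,\mathcal{L})$, $G = \Gal(h)$, and $\pi: \mathcal{G} \twoheadrightarrow G$ for the given projection. The module $\mathbb{F}_p[G] \cong \mathrm{Ind}_1^G \mathbb{F}_p$ is induced.

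\textbf{Step 1: a weak solution.} By Shapiro's lemma,
\[
H^2(\mathcal{G}, \mathbb{F}_p[G]) \cong H^2(\Gal(N, \mathcal{L}_N), \mathbb{F}_p).
\]
The obstruction to lifting $\pi$ to $E$ is the inflation of the extension class $\varepsilon \in H^2(G, \mathbb{F}_p[G])$. Under Shapiro this obstruction corresponds to the restriction of $\varepsilon$ to the trivial subgroup, which is zero. Concretely, the extension splits over $\Gal(N,\mathcal{L}_N)$ after pulling back along $\pi$. Hence there is a continuous homomorphism $\psi_0: \mathcal{G} \to E$ lifting $\pi$. Its image $E_0$ surjects onto $G$, but it may meet $\mathbb{F}_p[G]$ in a proper $G$-submodule.

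\textbf{Step 2: modify by a cocycle.} All lifts have the form $\psi = x \cdot \psi_0$ with $x \in Z^1(\mathcal{G}, \mathbb{F}_p[G])$, where $\mathcal{G}$ acts through $\pi$. We choose $x$ so that $\psi$ is surjective. Since $\psi$ lifts a surjection onto $G$, it suffices that $\psi(\mathcal{G}) \supseteq \mathbb{F}_p[G]$.

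\textbf{Step 3: force local images.} Use the Chebotarev property (Lemma \ref{lma:chebo}) to pick a knot $K \in \mathcal{L}$, unramified in $h$, that is totally split in $N$. Then $D_{\overline{K}|K} \subset \Gal(N,\mathcal{L}_N) = \ker \pi$, and it acts trivially on $\mathbb{F}_p[G]$. Restricting to it,
\[
H^1(D_{\overline{K}|K}, \mathbb{F}_p[G]) = \mathrm{Hom}(\widehat{\mathbb{Z}}^2, \mathbb{F}_p[G]),
\]
using Theorem \ref{thm:profinitetorus}. Choose the local class $\chi_K$ sending the inertia generator $\mu$ to $1 \in \mathbb{F}_p[G]$, adjusted by subtracting $\psi_0|_{D_{\overline{K}|K}}$. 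This subtraction is possible because $\psi_0(D_{\overline{K}|K}) \subset \mathbb{F}_p[G]$. Corollary \ref{cor:nsw924}, applied with $L = \{K\}$, produces a global class $x$ with $x|_{D_{\overline{K}|K}} = \chi_K$. Then $\psi(\mu_K) = 1 \in \mathbb{F}_p[G]$, so $\psi(\mathcal{G}) \cap \mathbb{F}_p[G]$ is a $G$-stable subgroup containing $1$. Because $\psi(\mathcal{G})$ surjects onto $G$, conjugation shows that this subgroup contains all $g \in G$, hence all of $\mathbb{F}_p[G]$. Therefore $\psi$ is surjective, and $\pi$ factors through $\psi$ as required.

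\textbf{Main obstacle.} The delicate point is Step 1: justifying that the Shapiro identification commutes with inflation, so that the lifting obstruction genuinely vanishes in $H^2(\mathcal{G}, \mathbb{F}_p[G])$. If this direct argument is troublesome, the fallback is the standard fact that embedding problems with induced kernel $\mathrm{Ind}_1^G A$ always have a weak solution. One constructs it explicitly from the semidirect-product description $E \cong \mathbb{F}_p[G] \rtimes_{\varepsilon} G$ together with Shapiro's lemma. A secondary check is that the cocycle twist in Step 3 is applied correctly on $D_{\overline{K}|K}$; this is clean because $\pi$ is trivial there.
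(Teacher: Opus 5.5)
Your proof is correct and follows the same route as the paper: a weak solution $\psi_0$ from Shapiro's lemma, then a twist by a global cocycle whose restrictions to decomposition groups of unramified, totally split knots are prescribed via Corollary \ref{cor:nsw924}. The differences are minor. The paper gets $\psi_0$ directly from $H^2(G,\mathbb{F}_p[G])\cong H^2(1,\mathbb{F}_p)=0$, so $E$ splits and your Step 1 worry disappears; and it uses several totally split knots whose local images additively generate $\mathbb{F}_p[G]$, whereas your single knot plus the conjugation argument (using that $1$ generates the regular module $\mathbb{F}_p[G]$) is a slightly more economical variant.
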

In light of the theorems proved in this section, the proof of the above lemma follows in the same as Proposition 9.2.9 in \cite{neukirch2013cohomology}, which we bring here for completeness.
\begin{proof}
Let $G = \Gal(h)$. By Shapiro's lemma (1.6.4 of \cite{neukirch2013cohomology}), $H^2(\Gal(h), \mathbb{F}_p[G]) \cong H^2(1, \mathbb{F}_p) = 0$. By Proposition 3.5.9 in \cite{neukirch2013cohomology}, there exists some $\psi_0: \Gal(S^3, \mathcal{L}) \to E$ such that 
\[
\begin{tikzcd}[column sep=large, row sep=large]
& & & \mathrm{Gal}(S^3, \mathcal{L}) \arrow[d] \arrow[dl, "\psi_0"'] \\[1em]
1 \arrow[r] 
& \mathbb{F}_p[G] \arrow[r] 
& E \arrow[r] 
& \Gal(h) \arrow[r]
& 1
\end{tikzcd}
\]
commutes, but $\psi_0$ is not necessarily surjective. Since the above diagram is commutative, given a knot $K$ which is totally split with respect to $h$, any homomorphism $D_{\overline{K}} \to E$ lifting the canonical projection to $\Gal(h)$ must land entirely in the kernel $\mathbb{F}_p[G]$. By Lemma \ref{lma:densitytotallysplit}, we can pick a finite collection of knots $\{K_i\}_{i=1}^r \subset \mathcal{L}$ which are unramified and totally split with respect to $h$. We then choose local homomorphisms $\varphi_i: D_{\overline{K_i}} \to \mathbb{F}_p[G]$ whose images fully generate $\mathbb{F}_p[G]$. Let
\begin{equation}
    \psi_1: H^1(\Gal(S^3, \mathcal{L}), \mathbb{F}_p[G]) \to \prod_{i=1}^rH^1(D_{\overline{K_i}}, \mathbb{F}_p[G])
\end{equation}
be the restriction map, which is surjective by Corollary \ref{cor:nsw924}. This surjection implies that there exists a 1-cocycle $x \in Z^1(\Gal(S^3, \mathcal{L}), \mathbb{F}_p[G])$ representing a class in $H^1$ such that its local restriction is $x|_{D_{\overline{K_i}}}(g) = \varphi_i(g) \cdot (\psi_0(g))^{-1}$ for $g \in D_{\overline{K_i}}$. Let $\psi: \Gal(S^3, \mathcal{L}) \to E$ be defined as $\psi(g) = x(g)\psi_0(g)$; since the $K_i$ are totally split, the canonical projection of $D_{\overline{K_i}}$ into $\Gal(h)$ is trivial. Since $\psi_0$ makes the diagram commute, the image $\psi_0(D_{\overline{K_i}})$ must lie entirely in the kernel $\mathbb{F}_p[G]$. Thus, $\psi|_{D_{\overline{K_i}}} = \varphi_i$. Since the images of $\varphi_i$ generate $\mathbb{F}_p[G]$, it follows that the image of $\psi$ contains $\mathbb{F}_p[G]$, which is the kernel of the map $E \to \Gal(h)$. However, $\psi$ makes the above diagram commute; since its image contains the kernel, the image of $\psi$ must be all of $E$, as desired. 
\end{proof}

\section{Local correspondence of knots}\label{sec:localcorr}

The goal of this section is to prove Theorem \ref{thm:second}, the local correspondence: any isomorphism of absolute Galois groups sends decomposition groups to unique decomposition groups. Theorem \ref{thm:localcorrespondence} combines the injectivity of Corollary \ref{cor:geninjective} and the surjectivity of Corollary \ref{cor:surjgeneral} to show that any closed subgroup isomorphic to $\widehat{\mathbb{Z}}^2$ is contained in a unique decomposition group; uniqueness comes from Corollary \ref{cor:intersection}. Theorem \ref{thm:second} combines these results to obtain equality of decomposition groups and invokes the characteristic-preserving hypothesis to describe the full local correspondence.

\medskip

Fix a stably Chebotarev link $\mathcal{L} \subset S^3$. 

\begin{prop}\label{prop:uniqueness}
Let $h: \widetilde{M} \to S^3$ be an intermediate branched cover of the universal $\mathcal{L}$-branched cover $\overline{M_{\mathcal{L}}}$. Then if two distinct knots are totally indecomposable with respect to the covering $\overline{M_{\mathcal{L}}} \to \widetilde{M}$, then $\widetilde{M}=\overline{M_{\mathcal{L}}}$, where the equality is of covering maps. 
\end{prop}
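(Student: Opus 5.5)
We argue by contradiction, mirroring the number-theoretic fact that a proper subextension of $\overline{\mathbb{Q}}$ cannot have two distinct primes that are both totally indecomposable all the way up. Let $H = \Gal(\overline{M_{\mathcal{L}}} \to \widetilde{M}) \subset \Gal(S^3,\mathcal{L})$ be the closed subgroup corresponding to $\widetilde{M}$. Let $K_1 \neq K_2$ be the two knots of $h^{-1}(\mathcal{L})$ that are totally indecomposable in $\overline{M_{\mathcal{L}}} \to \widetilde{M}$. By the equivalent characterisation for pro-coverings (item (3) of the remark following the definition of total indecomposability), we get
\[
H \subset D_{\overline{K_1}|K_1} \cap D_{\overline{K_2}|K_2}
\]
for the unique lifts $\overline{K_1}, \overline{K_2}$. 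It then suffices to show that $H$ is trivial, i.e. that the only closed subgroup contained in two decomposition groups of distinct knots of $\widetilde{M}$ is $1$. This gives $\widetilde{M} = \overline{M_{\mathcal{L}}}$ as covering maps.

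Suppose $H \neq 1$. Since $H$ is a closed subgroup of a group isomorphic to $\widehat{\mathbb{Z}}^2$ (Theorem \ref{thm:profinitetorus}), it has a nontrivial finite quotient, hence a nontrivial continuous homomorphism $\chi: H \to \mathbb{F}_p$ for some prime $p$. Because $\overline{K_1}$ and $\overline{K_2}$ are distinct knots of $\widetilde{M}$, Corollary \ref{cor:surjgeneral} applied to $L = \{K_1, K_2\}$ gives a class $\varphi \in H^1(H,\mathbb{F}_p)$ with prescribed local restrictions. We choose them as
\[
\varphi|_{D_{\overline{K_1}|K_1} \cap H} = \chi \neq 0, \qquad \varphi|_{D_{\overline{K_2}|K_2} \cap H} = 0 .
\]
But both intersections equal $H$ itself, so the restriction maps are the identity on $H^1(H,\mathbb{F}_p)$. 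This forces $\varphi = \chi$ and $\varphi = 0$ simultaneously, which is a contradiction.

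Equivalently, total indecomposability of $K_2$ means the degree-$p$ cover of $\widetilde{M}$ cut out by $\chi$ has a single knot over $K_2$ with nontrivial decomposition group. Meanwhile the surjectivity lets us find a cover in which $K_2$ splits, and the two statements clash.

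The main obstacle is making the first step precise. We must know that $H$ has a nontrivial $\mathbb{F}_p$-quotient for some $p$, which holds because a nontrivial profinite group subgroup of $\widehat{\mathbb{Z}}^2$ is abelian and nontrivial, hence has a nontrivial cyclic $p$-quotient. We must also check that Corollary \ref{cor:surjgeneral} genuinely applies when $H$ is only closed rather than open. Here I would follow the direct-limit argument of Corollary \ref{cor:geninjective}: for open $U \supset H$ small enough that $K_1, K_2$ have distinct images in $M_U$, apply Lemma \ref{lma:surjective} at level $U$ and pass to the limit using exactness of $\varinjlim$. Finally, we need that distinct knots of $\widetilde{M}$ remain distinct at a finite level, which holds by the definition of the inverse system of lifts.
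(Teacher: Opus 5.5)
Your proposal is correct and is essentially the paper's argument. Both identify the two decomposition groups with $H=\Gal(\overline{h})$ and apply Corollary~\ref{cor:surjgeneral} to $L=\{K_1,K_2\}$, so the diagonal restriction $H^1(H,\mathbb{F}_p)\to H^1(H,\mathbb{F}_p)\times H^1(H,\mathbb{F}_p)$ would have to be onto, which forces $H^1(H,\mathbb{F}_p)=0$ for every $p$ and hence $H=1$. Your extra remarks make explicit two steps the paper leaves implicit: that $H$ is abelian, so a nontrivial $H$ has a nontrivial $\mathbb{F}_p$-character, and that $K_1,K_2$ must be separated at some finite level for the direct-limit argument behind Corollary~\ref{cor:surjgeneral}.
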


\begin{proof}
Denote the covering $\overline{h}: \overline{M_{\mathcal{L}}} \to \widetilde{M}$. Suppose $K_1 \neq K_2 \in \mathcal{L}_M$ are totally indecomposable with respect to $\overline{h}$, so we have $D_{\overline{K_1}|K_1} = \Gal(\overline{h}) = D_{\overline{K_2}|K_2}$. By Theorem \ref{thm:profinitetorus}, these groups are isomorphic to closed subgroups of $\widehat{\mathbb{Z}}^2$. Fix a prime $p$. By Corollary \ref{cor:surjgeneral}, the natural restriction map
\begin{equation}
    \psi_1: H^1(\Gal(\overline{h}), \mathbb{F}_p) \twoheadrightarrow H^1(D_{\overline{K_1}|K_1}, \mathbb{F}_p) \times H^1(D_{\overline{K_2}|K_2}, \mathbb{F}_p)
\end{equation}
is a surjection. This is only possible if $H^1(\Gal(\overline{h}), \mathbb{F}_p)$ is trivial, since the left side is isomorphic to both components of the second side. This is true for any prime $p$, so it follows that $\Gal(\overline{h})$ is trivial, and so $\overline{M_\mathcal{L}} = \widetilde{M}$, contradicting the nontriviality of $\widetilde{M}$ relative to the universal $\mathcal{L}$-branched cover.
\end{proof}

\begin{corollary}\label{cor:intersection}
Let $h: M \to S^3 \in \mathrm{Cov}(S^3, \mathcal{L})$. Given any two $K_1 \neq K_2 \in \mathcal{L}_M$, the intersection $D_{\overline{K_1}|K_1} \cap D_{\overline{K_2}|K_2}$ is trivial.
\end{corollary}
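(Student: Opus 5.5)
The plan is to reduce the statement to Proposition \ref{prop:uniqueness}, or more robustly to rerun its cohomological argument directly for the closed subgroup $H = D_{\overline{K_1}|K_1} \cap D_{\overline{K_2}|K_2}$ of $\Gal(M, \mathcal{L}_M)$. Let $h_H: \widetilde{M} \to M$ be the intermediate pro-cover of $\overline{M_{\mathcal{L}}}$ corresponding to $H$. Because $H$ is contained in both decomposition groups, the decomposition groups of $\overline{K_1}$ and $\overline{K_2}$ inside $H$ are all of $H$. In other words, the images $\widetilde{K_1}, \widetilde{K_2}$ of $\overline{K_1}, \overline{K_2}$ in $\widetilde{M}$ are totally indecomposable in $\overline{M_{\mathcal{L}}} \to \widetilde{M}$, by the third characterization of total indecomposability. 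We also check that $\widetilde{K_1} \neq \widetilde{K_2}$: they lie over the distinct knots $K_1 \neq K_2$ of $\mathcal{L}_M$, so they are distinct components of $h_H^{-1}(\mathcal{L})$.

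Next, fix a prime $p$ and apply Corollary \ref{cor:surjgeneral} to $H$ with the finite set $L = \{\widetilde{K_1}, \widetilde{K_2}\}$. This gives a surjection
\[
H^1(H, \mathbb{F}_p) \twoheadrightarrow H^1(D_{\overline{K_1}|K_1} \cap H, \mathbb{F}_p) \times H^1(D_{\overline{K_2}|K_2} \cap H, \mathbb{F}_p) = H^1(H, \mathbb{F}_p) \times H^1(H, \mathbb{F}_p).
\]
The map is the diagonal $x \mapsto (x, x)$, since both restrictions are the identity. Its image is therefore the diagonal, so surjectivity forces $H^1(H, \mathbb{F}_p) = 0$. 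This uses that $H^1(H,\mathbb{F}_p)$ is finite, which holds because $H$ is a closed subgroup of $\widehat{\mathbb{Z}}^2$ by Theorem \ref{thm:profinitetorus}. Alternatively, the diagonal of an $\mathbb{F}_p$-vector space $V$ equals $V \times V$ only if $V=0$, with no finiteness needed.

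Finally, since $H \le D_{\overline{K_1}|K_1} \cong \widehat{\mathbb{Z}}^2$ is closed, $H \cong \prod_\ell \mathbb{Z}_\ell^{r_\ell}$ with $r_\ell \le 2$. Then $H^1(H, \mathbb{F}_p) = \mathrm{Hom}(H, \mathbb{F}_p) = 0$ for every $p$ means every $r_p = 0$, so $H = 1$. This is exactly the conclusion of Proposition \ref{prop:uniqueness}, with $\widetilde{M} = \overline{M_{\mathcal{L}}}$.

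The main obstacle is justifying Corollary \ref{cor:surjgeneral} for the non-open subgroup $H$. The corollary is stated for arbitrary closed $H$, via a direct limit over open $U \supset H$. Its use requires that the components $\widetilde{K_1}, \widetilde{K_2}$ stay distinct at each finite level $U$, which holds for $U$ small enough, and that the local groups $D \cap U$ form a compatible system whose limit is $D \cap H = H$. I would take the corollary as given and only verify these two compatibility points, noting that distinctness at finite levels is automatic here because the images in $M$ are already distinct.
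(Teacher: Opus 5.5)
Your proposal is correct and is essentially the paper's own argument. The paper passes to the pro-cover corresponding to $D_{\overline{K_1}|K_1}\cap D_{\overline{K_2}|K_2}$, notes that the images of $\overline{K_1},\overline{K_2}$ there are distinct totally indecomposable knots, and invokes Proposition~\ref{prop:uniqueness}, whose proof is exactly your diagonal-surjectivity argument via Corollary~\ref{cor:surjgeneral} together with the $\widehat{\mathbb{Z}}^2$ structure from Theorem~\ref{thm:profinitetorus}. Your version inlines that argument and checks distinctness of the two knots at the cofinal finite levels $U\subseteq \Gal(M,\mathcal{L}_M)$, which is, if anything, cleaner than the paper's detour through the intermediate covers attached to each $D_{\overline{K_i}|K_i}$.
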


\begin{proof}
Let $\overline{h_{12}}: \overline{M_{\mathcal{L}}} \to \widetilde{M_{12}}$ be the covering corresponding to $D_{\overline{K_1}|K_1} \cap D_{\overline{K_2}|K_2}$, and let $M_i$ denote the subcover corresponding to $D_{\overline{K_i}|K_i}$ with coverings $\overline{h}_i: \overline{M_{\mathcal{L}}} \to M_i$. Let $K_i' = \overline{h}_i(\overline{K}_i)$, by definition of $\widetilde{M}_i$, the knots $K'_i$ are totally indecomposable in $\overline{h}_i$. Let $K_i'' = \overline{h}_{12}(\overline{K}_i)$; by construction, $K_i''$ lies over $K_i'$. However, this would mean $K_i''$ are distinct knots totally indecomposable with respect to the covering $\overline{h}_12$. By Proposition \ref{prop:uniqueness}, it follows that $\overline{h_i}$ is trivial, hence the intersection $D_{\overline{K_1}|K_1} \cap D_{\overline{K_2}|K_2}$ is trivial.
\end{proof}

\begin{theorem}\label{thm:localcorrespondence}
Let $h: M \to S^3 \in \mathrm{Cov}(S^3, \mathcal{L})$. Given $\widehat{\mathbb{Z}}^2 \cong G \subset \Gal(M, \mathcal{L}_M)$, there exists a unique knot $K \in \mathcal{L}_M$ such that $G \subset D_{\overline{K}|K}$.
\end{theorem}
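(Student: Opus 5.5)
Uniqueness is immediate from Corollary \ref{cor:intersection}: if $G \subset D_{\overline{K_1}|K_1}\cap D_{\overline{K_2}|K_2}$ with $K_1\neq K_2$, that intersection is trivial, which contradicts $G\cong\widehat{\mathbb{Z}}^2\neq 1$. (Here $\overline{K}$ is chosen compatibly with $G$, and distinct lifts of the same knot are handled the same way, since conjugate decomposition groups at different lifts also intersect trivially by the same corollary.) The real work is existence, and I would model it on the classical argument (NSW 12.1.9). That argument passes to the pro-cover $h_G:\overline{M_{\mathcal{L}}}\to \widetilde{M}_G$ attached to the closed subgroup $G$ and shows that some knot of $\widetilde{M}_G$ is totally indecomposable there, i.e. $G\subset D_{\overline{K}|K}$.

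\textbf{Step 1: rule out total splitting everywhere.} Fix a prime $p$. For every open $U\supset G$, consider the knots $J\in\mathcal{L}_{M_U}$. I claim that for each $U$ there is some $J$ with $D_{\overline{J}|J}\cap G$ nontrivial in a useful sense, namely with $H^1(G,\mathbb{F}_p)\to H^1(D_{\overline{J}|J}\cap G,\mathbb{F}_p)$ nonzero. Otherwise restriction to every decomposition group in the $G$-pro-cover kills $H^1(G,\mathbb{F}_p)\cong\mathbb{F}_p^2\neq0$, which contradicts the injectivity of Corollary \ref{cor:geninjective} applied to the closed subgroup $H=G$. So for every cofinite $\mathcal{L}'_G$ there is a knot $J$ of $\widetilde{M}_G$ in $\mathcal{L}'_G$ whose decomposition group in $G$ is nontrivial.

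\textbf{Step 2: at most one such knot, via a dimension count.} Suppose two distinct knots $J_1,J_2$ of $\widetilde{M}_G$ have nontrivial $D_i:=D_{\overline{J_i}|J_i}\cap G$. Each $D_i$ is a closed subgroup of $\widehat{\mathbb{Z}}^2$ with $H^1(D_i,\mathbb{F}_p)\ne0$ for suitable $p$. Corollary \ref{cor:surjgeneral} gives a surjection
\[
\mathbb{F}_p^2\cong H^1(G,\mathbb{F}_p)\twoheadrightarrow H^1(D_1,\mathbb{F}_p)\times H^1(D_2,\mathbb{F}_p).
\]
If some $D_i$ is open in $G$, it contributes dimension $2$ and forces the other factor to vanish. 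In general, the inflation/compatibility of the surjections for $H\subset D_1$ gives a contradiction, as in Proposition \ref{prop:uniqueness}. I would make this precise by passing to an open subgroup $G'\subset G$ (still $\cong\widehat{\mathbb{Z}}^2$) contained in $D_1$ up to finite index, where the restriction $H^1(G',\mathbb{F}_p)\to H^1(D_1\cap G',\mathbb{F}_p)$ is already an isomorphism, leaving no room for $J_2$. Combined with Step 1 applied to ever smaller cofinite sets, this shows there is exactly one knot $J$ with nontrivial decomposition, and then all other knots are totally split.

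\textbf{Step 3: conclude $G= D_J$.} With $J$ unique, apply Corollary \ref{cor:geninjective} to the closed subgroup $G$ with $\mathcal{L}'_G$ equal to the cofinite set of all knots except $J$. Injectivity then fails unless every class in $H^1(G',\mathbb{F}_p)$ is detected at $J$, for all $p$ and all open $G'\subset G$. This means $D_{\overline{J}|J}\cap G'$ surjects onto every finite abelian quotient of $G'$. Since $G'$ is abelian, hence pronilpotent, $D_{\overline{J}|J}\cap G$ is dense in $G$, and being closed it equals $G$; so $G\subset D_{\overline{J}|J}$. Pushing $\overline{J}$ down to $M$ gives the knot $K$.

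\textbf{Main obstacle.} The main obstacle is Step 2 in the non-open case: excluding two knots each with infinite-index, nontrivial decomposition in $G$ requires a careful choice of $p$ and of the open $G'$. I would first try to handle it uniformly by applying Corollary \ref{cor:surjgeneral} to $G'$ with $p$ dividing a finite quotient of both $D_1$ and $D_2$.
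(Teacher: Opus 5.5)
Your uniqueness argument via Corollary \ref{cor:intersection} and your overall skeleton agree with the paper: pass to the pro-cover $\widetilde{M}_G$, use injectivity to find a knot with nonzero local $H^1$, then show that $G$ fixes it. The existence part, however, has a genuine gap exactly at Step 2, which is never actually proved.

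\textbf{Why your repair of Step 2 fails.} Write $D_J=D_{\overline{J}|J}\cap G$. Suppose $D_1$ has infinite index in $G$, say $D_1\cong\mathbb{Z}_p$; this is the case that matters. Then there is no open $G'\subset G$ contained in $D_1$ up to finite index. Moreover $H^1(G',\mathbb{F}_p)\to H^1(D_1\cap G',\mathbb{F}_p)$ can never be an isomorphism, since the ranks are $2$ and $1$. No choice of $p$ helps either: a surjection $\mathbb{F}_p^2\twoheadrightarrow\mathbb{F}_p\times\mathbb{F}_p$ is no contradiction as long as each knot is counted once.

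\textbf{The missing idea: count lifts in finite subcovers.} This is the core of the paper's proof. Take an open $G'\subset G$.
\begin{itemize}
\item The knot $J$ has $[G:G'D_J]$ preimages in the cover of $\widetilde{M}_G$ attached to $G'$. Since $G$ is abelian, each preimage has decomposition group $D_J\cap G'$.
\item A closed subgroup of $\widehat{\mathbb{Z}}^2$ has the form $\prod_\ell\mathbb{Z}_\ell^{r_\ell}$, and its open subgroups have the same ranks. Hence $\dim_{\mathbb{F}_p}H^1(D_J\cap G',\mathbb{F}_p)=\dim_{\mathbb{F}_p}H^1(D_J,\mathbb{F}_p)=:r$.
\item Apply Corollary \ref{cor:surjgeneral} to $G'\cong\widehat{\mathbb{Z}}^2$ and the finite set of all these preimages. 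This gives $[G:G'D_J]\,r\le 2$.
\end{itemize}
If $r\geq 1$, every finite quotient of $G/D_J$ has order at most $2$, so $D_J$ is open. Then $r=2$, so $G'D_J=G$ for every open $G'$. Thus $D_J$ is dense, hence equal to $G$. So $J$ is totally indecomposable and $G\subset D_{\overline{J}|J}$.

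This works for a single knot, so your Step 2 and the Frattini argument of Step 3 become unnecessary. The paper phrases the same count as a two-case analysis and invokes Proposition \ref{prop:uniqueness} to exclude two such knots.

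\textbf{Cofinite sets in Steps 1 and 3.} These steps must not use cofinite sets. Suppose $J$ is the only knot with nontrivial decomposition group in $G$. Then $H^1(G,\mathbb{F}_p)\neq 0$ maps to zero in the product over $\mathcal{L}_G\setminus\{J\}$. So Step 1 for every cofinite set contradicts the conclusion of Step 2: applied to ever smaller cofinite sets, it would produce infinitely many such knots. The inference in Step 3 is also backwards, since $J$ has been removed from the product, so classes detected at $J$ are irrelevant there.

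This is not a mere slip. For $G=D_{\overline{K}|K}$ itself, Corollary \ref{cor:intersection} shows that every other knot of $\widetilde{M}_G$ has trivial decomposition group in $G$. So injectivity into a cofinite subproduct fails for such closed subgroups. Use Corollary \ref{cor:geninjective} only with the full set $\mathcal{L}_G$, as the paper does. Step 1 then yields a knot $J$ with $r\geq 1$ for some $p$, and the lift count above completes existence.
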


\begin{proof}
Given $\widehat{\mathbb{Z}}^2 \cong G \subset \Gal(M, \mathcal{L}_M)$, take the branched subcover $h_G: \widetilde{M_G} \to M$ of $\overline{M_\mathcal{L}}$ corresponding to $G$. Let $\mathcal{L}_G = h_G^{-1}(\mathcal{L}_M)$. We now show that there exists a unique knot in $\mathcal{L}_G$ which is totally indecomposable in $\overline{M_\mathcal{L}}$, i.e. $G \subset D_{\overline{h_G(K)}|h_G(K)}$; this suffices to prove the theorem. By Corollary \ref{cor:geninjective}, we have an injection, for any prime $p$,
\begin{equation}
     H^1(\Gal(\widetilde{M_G}, \mathcal{L}_G), \mathbb{F}_p) \hookrightarrow \prod_{K \in \mathcal{L}_G} H^1(D_{\overline{K}|K}, \mathbb{F}_p)
\end{equation}
Taking the dimension over $\mathbb{F}_p$ on both sides of this injection, we conclude that the product on the right side must have dimension at least two. Then we have two possibilities:
\begin{enumerate}
    \item For exactly one knot $K \in \mathcal{L}_G$, $\dim_{\mathbb{F}_p}H^1(D_{\overline{K}|K}, \mathbb{F}_p) \geq 2$. Let $h': \widetilde{M'} \to \widetilde{M_G}$ be a finite branched cover corresponding to a finite-index subgroup of $G$. By Corollary \ref{cor:surjgeneral}, we have a surjection
    \begin{equation}
        H^1(\Gal(\widetilde{M'}, (h')^{-1}(\mathcal{L}_G)), \mathbb{F}_p) \twoheadrightarrow \prod_{K' \in (h')^{-1}(K)}H^1(D_{\overline{K'}|K'}, \mathbb{F}_p)
    \end{equation}
    However, $\Gal(\widetilde{M}', (h')^{-1}(\mathcal{L}_G))$ is a finite-index subgroup of $G \cong \widehat{\mathbb{Z}}^2$, and hence the dimension of the left side over $\mathbb{F}_p$ is equal to 2. Because for any lift $K' \in (h')^{-1}(K)$, the group $D_{\overline{K'}|K'}$ is a finite-index subgroup of $D_{\overline{K}|K} \cong \widehat{\mathbb{Z}}^2$, they share the same dimension over $\mathbb{F}_p$. Therefore, for every such lift, $\dim_{\mathbb{F}_p}H^1(D_{\overline{K'}|K'}, \mathbb{F}_p) = \dim_{\mathbb{F}_p}H^1(D_{\overline{K}|K}, \mathbb{F}_p) \geq 2$. By counting dimensions over $\mathbb{F}_p$, we conclude that there can only be one $K'$ lying over $K$. Therefore, $K$ is totally indecomposable in any finite branched cover of $\widetilde{M_G}$, and hence also in $\overline{M_\mathcal{L}}$.
    \item For at least two distinct knots $K_1, K_2 \in \mathcal{L}_G$, $\dim_{\mathbb{F}_p}H^1(D_{\overline{K_i}|K_i}, \mathbb{F}_p) \geq 1$ for $i = 1, 2$. Let $h': \widetilde{M'} \to \widetilde{M_G}$ be as in Case 1. By Corollary \ref{cor:surjgeneral}, we have a surjection
    \begin{equation}
         H^1(\Gal(\widetilde{M'}, (h')^{-1}(\mathcal{L}_G)), \mathbb{F}_p) \twoheadrightarrow \prod_{K' \in (h')^{-1}(K_1)}H^1(D_{\overline{K'}|K'}, \mathbb{F}_p) \times \prod_{K' \in (h')^{-1}(K_2)}H^1(D_{\overline{K'}|K'}, \mathbb{F}_p)
    \end{equation}
    Because for any lift $K' \in (h')^{-1}(K_i)$, the group $D_{\overline{K'}|K'}$ is a finite-index subgroup of $D_{\overline{K_i}|K_i}$ (which is a closed subgroup of $\widehat{\mathbb{Z}}^2$), they share the same dimension over $\mathbb{F}_p$. Therefore, for every such lift, $\dim_{\mathbb{F}_p}H^1(D_{\overline{K'}|K'}, \mathbb{F}_p) = \dim_{\mathbb{F}_p}H^1(D_{\overline{K_i}|K_i}, \mathbb{F}_p) \geq 1$. Since the left side has dimension 2 over $\mathbb{F}_p$, the only possibility is that $K_1$ and $K_2$ each only have one knot in $\overline{M'}$ lying above them. Thus, both $K_1$ and $K_2$ are totally indecomposable in every finite branched cover $\overline{M}'$, so this is also true in $\overline{M_\mathcal{L}}$, contradicting Proposition \ref{prop:uniqueness}.
\end{enumerate}
So the first case holds, and there exists a totally indecomposable knot in $\mathcal{L}_G$. Combined with Proposition \ref{prop:uniqueness}, which guarantees at most one such knot, there is exactly one totally indecomposable knot in $\mathcal{L}_G$. If there exist two distinct knots $K_1, K_2 \in \mathcal{L}_M$ such that $G \subset D_{\overline{K_i}|K_i}$ for $i = 1, 2$, this would imply a nontrivial intersection between two distinct decomposition groups, a contradiction of Corollary \ref{cor:intersection}.
\end{proof}

We now upgrade from containment to equality and establish the full bijection on decomposition groups.

\begin{theorem}\label{thm:second}
Let $h_i: M_i \to S^3, i = 1, 2$ be two coverings of $S^3$ branched over finite sublinks $L_i \subset \mathcal{L}$. Suppose $\sigma: \Gal(M_1, \mathcal{L}_{M_1})) \to \Gal(M_2, \mathcal{L}_{M_2})$ is an isomorphism. Given a knot $K_1 \in \mathcal{L}_{M_1}$: 
\begin{enumerate}
    \item $\sigma(D_{\overline{K_1}|K_1}) = D_{\overline{K_2}|K_2}$ for a unique $K_2 \in \mathcal{L}_{M_2}$.
    \item If $\sigma$ is characteristic-preserving, then $K_2 \in h_2^{-1}(h_1(K_1))$.
\end{enumerate}
\end{theorem}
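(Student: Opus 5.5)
Part (1) follows the standard "containment both ways plus uniqueness" argument, using Theorem~\ref{thm:localcorrespondence} on each side and Corollary~\ref{cor:intersection} to force equality. Part (2) then uses only the definition of characteristic-preserving.

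\textbf{Step 1: containment.} By Theorem~\ref{thm:profinitetorus}, $D_{\overline{K_1}|K_1}\cong\widehat{\mathbb{Z}}^2$. Hence $G:=\sigma(D_{\overline{K_1}|K_1})$ is a closed subgroup of $\Gal(M_2,\mathcal{L}_{M_2})$ isomorphic to $\widehat{\mathbb{Z}}^2$. Theorem~\ref{thm:localcorrespondence} gives a unique $K_2\in\mathcal{L}_{M_2}$, with a corresponding $\overline{K_2}$, such that $G\subset D_{\overline{K_2}|K_2}$.

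\textbf{Step 2: reverse containment.} Apply the same reasoning to $\sigma^{-1}$ and the group $D_{\overline{K_2}|K_2}\cong\widehat{\mathbb{Z}}^2$. This gives a knot $K_1'\in\mathcal{L}_{M_1}$ with
\[
D_{\overline{K_1}|K_1}\subset \sigma^{-1}(D_{\overline{K_2}|K_2})\subset D_{\overline{K_1'}|K_1'}.
\]
If $(K_1',\overline{K_1'})$ were different from $(K_1,\overline{K_1})$, the two decomposition groups would meet in a copy of $\widehat{\mathbb{Z}}^2$. This contradicts Corollary~\ref{cor:intersection}, applied after conjugating if the two lifts lie over the same knot but are different conjugate lifts; Proposition~\ref{prop:uniqueness} covers that case as well. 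So $K_1'=K_1$, all the inclusions are equalities, and $\sigma(D_{\overline{K_1}|K_1})=D_{\overline{K_2}|K_2}$.

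\textbf{Step 3: uniqueness of $K_2$.} If $D_{\overline{K_2}|K_2}=D_{\overline{K_2'}|K_2'}$, then Corollary~\ref{cor:intersection} forces $K_2=K_2'$, since otherwise the intersection would be all of $\widehat{\mathbb{Z}}^2$ rather than trivial. This gives the well-defined map $K_1\mapsto K_2$, which is the bijection $\sigma_*$ of Theorem~\ref{thm:main}(1). It is bijective because running the construction for $\sigma^{-1}$ produces the inverse map.

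\textbf{Part (2).} With $\sigma_*$ in hand, characteristic preservation (Definition~\ref{def:charpres}) says exactly that $h_2(\sigma_*(K_1))=h_1(K_1)$, that is, $K_2\in h_2^{-1}(h_1(K_1))$. So part (2) is a direct unwinding of the definition once part (1) identifies the induced bijection with $K_1\mapsto K_2$.

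\textbf{Main obstacle.} The delicate point is Step 2's use of Corollary~\ref{cor:intersection}, which is stated for distinct knots in a single finite cover. Lifts $\overline{K}$ are only determined up to conjugation, so I must check that $\sigma$ respects the choice of lift: a conjugate of a decomposition group is again a decomposition group, for a conjugate lift. It also has to be checked that "trivial intersection" extends to two distinct lifts over the same knot. I would first try to reduce this case to Proposition~\ref{prop:uniqueness}, by passing to the cover corresponding to the intersection and observing that both lifts are totally indecomposable there.
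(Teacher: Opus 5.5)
Your proposal is correct and follows the paper's proof essentially step for step: containment comes from Theorem~\ref{thm:localcorrespondence}; reverse containment comes from applying the same theorem to $\sigma^{-1}$ and using Corollary~\ref{cor:intersection} to force $K_1'=K_1$; and part (2) is an unwinding of Definition~\ref{def:charpres} once the induced bijection is identified with $K_1\mapsto K_2$. Your extra attention to distinct lifts over the same knot, and to the uniqueness of $K_2$, fills in points the paper passes over silently, though conjugation alone does not separate two lifts. The clean fix is the one you end with: rerun the argument of Corollary~\ref{cor:intersection} (via Proposition~\ref{prop:uniqueness}) on the cover attached to the intersection, or equivalently apply that corollary in a finite cover where the two lifts become distinct knots.
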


\begin{proof}
\begin{enumerate}
    \item Let $U_i = \Gal(M_i, \mathcal{L}_{M_i})$, viewed as open subgroups of $G = \Gal(S^3, \mathcal{L})$. Given $K_1 \in \mathcal{L}_{M_1}$, the image $\sigma(D_{\overline{K_1}|K_1}) \cong \widehat{\mathbb{Z}}^2$ is a subgroup of $U_2$. By Theorem \ref{thm:localcorrespondence}, there exists a unique $K_2 \in \mathcal{L}_{M_2}$ such that $\sigma(D_{\overline{K_1}|K_1}) \subset D_{\overline{K_2}|K_2}$.
    
    \medskip
    
    We claim that $\sigma(D_{\overline{K_1}|K_1}) = D_{\overline{K_2}|K_2}$. Since $D_{\overline{K_2}|K_2} \cong \widehat{\mathbb{Z}}^2$, applying Theorem \ref{thm:localcorrespondence} to $\sigma^{-1}$ gives a unique $K_1' \in \mathcal{L}_{M_1}$ with $\sigma^{-1}(D_{\overline{K_2}|K_2}) \subset D_{\overline{K_1'}}$. Now $D_{\overline{K_1}|K_1} \subset \sigma^{-1}(D_{\overline{K_2}|K_2}) \subset D_{\overline{K_1'}}$, so $D_{\overline{K_1}|K_1} \cap D_{\overline{K_1'}} \supset D_{\overline{K_1}|K_1} \neq 1$. By Corollary \ref{cor:intersection}, $K_1' = K_1$. Thus $\sigma^{-1}(D_{\overline{K_2}|K_2}) \subset D_{\overline{K_1}|K_1}$, which gives $D_{\overline{K_2}|K_2} \subset \sigma(D_{\overline{K_1}|K_1})$. Combined with $\sigma(D_{\overline{K_1}|K_1}) \subset D_{\overline{K_2}|K_2}$, we obtain $\sigma(D_{\overline{K_1}|K_1}) = D_{\overline{K_2}|K_2}$.
    \item Since $\sigma$ is characteristic-preserving, $\sigma(D_{\overline{K_1}|K_1})$ is contained in a decomposition group lying over the same base knot $h_1(K_1)$; equality follows from the first part.
\end{enumerate} 
\end{proof}

\section{Proof of the main theorem}\label{sec:proof}

We now synthesise the local-global principles of Section \ref{sec:Local-Global} with the Chebotarev density results of \ref{sec:totsplit} to prove Theorem \ref{thm:main}. The first part of the main theorem follows from Theorem \ref{thm:second}. The proof proceeds in three claims. First, we show that $\sigma$ induces conjugation by a single element at every finite level (Claim 1), using the embedding problem of Lemma \ref{lma:embdding} and the argument from Uchida \cite{uchida}. Second, we use compactness to produce a global element $\alpha \in \Gal(S^3, \mathcal{L})$ inducing $\sigma$ (Claim 2). We finally prove uniqueness (Claim 3).  

\medskip

Fix a stably Chebotarev link $\mathcal{L} \subset S^3$ with Galois group $G = \Gal(S^3, \mathcal{L})$. Let $h_i: M_i \to S^3 \in \Cov(S^3, \mathcal{L})$, $i = 1, 2$ be two coverings branched over finite sublinks $L_i \subset \mathcal{L}$. Let $U_i = \Gal(M_i, \mathcal{L}_{M_i})$ and $\sigma: U_1 \to U_2$ be a characteristic-preserving isomorphism. For any finite normal cover $h: N \to S^3 \in \mathrm{Cov}(S^3, \mathcal{L})$ through which both $M_1$ and $M_2$ factor, write $V = \Gal(N, \mathcal{L}_N) \triangleleft G$ so that $\Gal(h) \cong G/V$, and let $h_i': N \to M_i$ denote the intermediate covering maps.

\begin{theorem}\label{thm:sigmaV}
For any normal subgroup $V \subset U_1 \cap U_2$, we have $\sigma(V) = V$.
\end{theorem}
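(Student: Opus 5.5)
We mirror Step 3 of the classical argument: $V$ is determined by its set of totally split knots (Corollary \ref{cor:first}), and that set can be read off from decomposition groups, which $\sigma$ preserves over the same base knot (Theorem \ref{thm:second}(2)). Throughout, $V$ is assumed open in $G$, i.e.\ $V=\Gal(N,\mathcal{L}_N)$ for a finite normal cover $h\colon N\to S^3$ as in the setup above.

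First, since $V\subset U_1$ is open, $\sigma(V)$ is an open subgroup of $U_2\subset G$. It therefore corresponds to some finite cover $h'\colon N'\to S^3$ in $\Cov(S^3,\mathcal{L})$, factoring through $M_2$. We do not yet know that $\sigma(V)$ is normal in $G$, so $h'$ need not be normal.

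Next we translate total splitting into group theory. By Remark \ref{lma:totsplit}(2), a knot $K\in\mathcal{L}$ is totally split in $N$ if and only if $D_{\overline{K}|K}\subset V$ for every choice of $\overline{K}$ over $K$. Every such $D_{\overline{K}|K}$ is conjugate in $G$ to one another. Since $V$ is normal in $G$ and lies in $U_1$, these decomposition groups all lie in $U_1$, where they are the absolute decomposition groups $D_{\overline{K_1}|K_1}$ for the lifts $K_1\in h_1^{-1}(K)$.

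Now we apply $\sigma$. By Theorem \ref{thm:second}, for each such $K_1$ we have $\sigma(D_{\overline{K_1}|K_1})=D_{\overline{K_2}|K_2}$ with $K_2$ lying over $K$. Hence every totally split $K$ of $N$ has a decomposition group over it contained in $\sigma(V)$, so $K$ has a degree-one lift in $N'$. Applying Theorem \ref{thm:subcover} with $h_1=h$ (normal) and $h_2=h'$ shows that $N$ is a subcover of $N'$, i.e.\ $\sigma(V)\subset V$. Strictly, that theorem needs the hypothesis in the other direction: every knot with a degree-one lift in $N'$ must be totally split in $N$. We get this by running the same argument with $\sigma^{-1}$. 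A degree-one lift in $N'$ means some $D_{\overline{K_2}|K_2}\subset\sigma(V)$, so $\sigma^{-1}$ of it is some $D_{\overline{K_1}|K_1}\subset V$ over the same $K$. Because $V$ is normal in $G$, containing one decomposition group over $K$ forces containing all of them (they are $G$-conjugate), so $K$ is totally split in $N$. This gives $N\succeq N'$, i.e.\ $\sigma(V)\subset V$.

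For the reverse inclusion we use symmetry: apply the same reasoning to $\sigma^{-1}$ and the cover corresponding to $\sigma(V)$. That cover need not be normal, so we cannot simply swap roles. Instead we compare indices. Both $V$ and $\sigma(V)$ have the same index in $U_2$: $\sigma$ is an isomorphism $U_1\to U_2$, so $[U_1:V]=[U_2:\sigma(V)]$. It remains to check $[U_1:V]=[U_2:V]$, i.e.\ $[G:U_1]=[G:U_2]$; equivalently $\deg h_1=\deg h_2$. To get this, compare densities of knots with a degree-one lift. Via Theorem \ref{thm:second} and Lemma \ref{lma:density}, $\sigma$ matches the degree-one knots of $\mathcal{L}_{M_1}$ and $\mathcal{L}_{M_2}$ over the same base knot. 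The density of base knots totally split in $M_i$, computed in the normal closure $N$, equals $1/[G:V]$ multiplied by the number of lifts. From this one extracts $\deg h_1=\deg h_2$, and with equal indices the inclusion $\sigma(V)\subset V$ becomes an equality.

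The main obstacle is this last index comparison. If the density route is awkward, the alternative is to count Sylow-type $H^1$ dimensions, or to note that characteristic preservation gives a bijection between lifts of each base knot, so the fibres $h_1^{-1}(K)$ and $h_2^{-1}(K)$ have equal cardinality for a totally split $K$, which is the degree.
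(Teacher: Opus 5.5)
\textbf{There is a genuine gap.} It is in the step that verifies the hypothesis of Theorem \ref{thm:subcover} for the pair $(N,N')$, namely that a knot with a degree-one lift in $N'$ is totally split in $N$.

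From $D_{\overline{K}|K}=D_{\overline{K_2}|K_2}\subset\sigma(V)$ you do get $\sigma^{-1}(D_{\overline{K_2}|K_2})=D_{\overline{K_1}|K_1}\subset V$ with $K_1$ over $K$. But $D_{\overline{K_1}|K_1}$ is the decomposition group inside $U_1$, i.e.\ $D_{\overline{K}'|K}\cap U_1$ for the point $\overline{K}'$ over $K_1$. This is a proper subgroup of the full decomposition group $D_{\overline{K}'|K}$ in $G$ as soon as $K_1$ has degree or branch index larger than one over $K$.

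Characteristic preservation only tells you that $K_1$ lies over $K$; it gives no control on its degree. Unlike the number-field case, that degree cannot be read off from the abstract group $D_{\overline{K_1}|K_1}\cong\widehat{\mathbb{Z}}^2$. So $D_{\overline{K_1}|K_1}\subset V$ only says that the lift of $K_1$ to $N$ has degree one over $K_1$. For example, $K$ could be inert in $M_1$ while $K_1$ splits completely in $N\to M_1$; then $K$ is not totally split in $N$.

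Normality of $V$ does not repair this. The $G$-conjugates of $D_{\overline{K}'|K}\cap U_1$ are $D_{g\overline{K}'|K}\cap gU_1g^{-1}$, which can again be proper. That $\sigma$ matches degree-one lifts with degree-one lifts is a consequence of the theorem, not an available input. Your forward direction escapes this only because $V\subset U_2$: a knot totally split in $N$ is totally split in $M_2$, so every $K_2$ over it has degree one and $D_{\overline{K_2}|K_2}$ is a full decomposition group. You should state this explicitly. Nothing analogous holds in the reverse direction.

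\textbf{The fix} is to use the direction you can prove, with the roles in Theorem \ref{thm:subcover} swapped.

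\begin{enumerate}
\item Let $K$ be totally split in $N$. Every decomposition group $D$ over $K$ in $G$ lies in $V\subset U_2$, so $D=D_{\overline{K_2}|K_2}$ for a degree-one $K_2$.
\item $\sigma^{-1}(D)=D_{\overline{K_1}|K_1}$ with $K_1$ over $K$, and this lies in a decomposition group of $K$ in $G$, hence in $V$.
\item So $K$ is totally split in $N'$, hence in its normal closure $N''$, whose group is $\bigcap_{g\in G}g\,\sigma(V)\,g^{-1}$.
\item Since $N$ is normal, its unramified knots with a degree-one lift are exactly its totally split knots. Theorem \ref{thm:subcover} with $N''$ as the normal cover and $N$ as the other cover then gives $V\subset\bigcap_{g}g\,\sigma(V)\,g^{-1}\subset\sigma(V)$.
\end{enumerate}

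Your index comparison then yields equality, provided you use the fibre-bijection argument: $\sigma_*$ restricts to bijections $h_1^{-1}(K)\to h_2^{-1}(K)$, and you take $K$ totally split in $N$. Do not use the density sketch, which presupposes that $\sigma$ matches degree-one knots — the very point at issue. So the step you flagged as the main obstacle is actually the easy one.

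For comparison, the paper never uses containments of individual decomposition groups. It applies the local correspondence to $\sigma|_V\colon V\to\sigma(V)$ to compare the \emph{numbers} of lifts of a knot in $N$ and in $N^\sigma$. From this it deduces that $N^\sigma$ is normal with the same totally split knots as $N$, and concludes by Corollary \ref{cor:first}.
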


\begin{proof}
Let $N\rightarrow M_1$ be the cover of $M_1$ corresponding to $V$, and $N^\sigma\rightarrow M_2$ the covering corresponding to $\sigma(V)$. Denote by $N'$ the normal closure over $S^3$ of $N^\sigma$. Let $K\in\mathcal{L}$ be a knot which is totally split in $N$, and so there are $K_1,\cdots,K_n$ knots over it in $N$, where $n=\deg(N\rightarrow S^3)$. By Theorem \ref{thm:localcorrespondence} applied to $\sigma: V \to \sigma(V)$, $K$ will have exactly $n$ knots above it in $N^\sigma$. By Lemma \ref{lma:densitytotallysplit} applied to $N'$, there exists a knot in $\mathcal{L}$ which is totally split in $N'$. Thus, that knot has $n' = \deg(N^\sigma\rightarrow S^3)$ knots above it in $N^\sigma$, and so by Theorem \ref{thm:localcorrespondence} applied to $\sigma^{-1}: \sigma(V) \to V$, $K$ will also have $n'$ preimages in $N$. Thus, $n' = n$, and so $h$ and $h^\sigma$ have equal degrees over $S^3$. In addition, we have shown that any knot which is totally split in $N$ is also totally split in $N^\sigma$.

\medskip

Now suppose $K$ is totally split in $N^\sigma$. Then it is totally split in its Galois closure $N'$ with corresponding subgroup $V' \subset G$. Let $(N')^{\sigma^{-1}}$ be the cover corresponding to $\sigma^{-1}(V')$. By the above argument applied to $N'$ and $(N')^{\sigma^{-1}}$, any knot which is totally split in $N'$ will be totally split in the normal closure $N''$ of $(N')^{\sigma^{-1}}$.

\medskip

Putting the above two arguments together, any knot $K$ which is totally split in $N$ is also totally split in $N''$. By Theorem \ref{thm:subcover}, $N''$ is a subcover of $N$. By construction, $N''$ is a cover of $N$, so we have $N = N''$, and so $N' = N^\sigma$, so $N^\sigma$ is normal. 

\medskip

We have also shown that the sets of totally split knots in $N$ and $N^\sigma$ completely coincide. By Corollary \ref{cor:first}, $N = N^\sigma$, so $V = \sigma(V)$.
\end{proof}

This theorem indicates that for $N$ a normal cover of $M_{12}$, $\sigma$ descends to an isomorphism $\sigma_N: \Gal(h_1') \to \Gal(h_2')$.

\begin{lemma}\label{lma:uchida2}
Let $N$ be a normal cover of $M_{12}$, and let $g \in \Gal(h_1')$. Then $\langle g \rangle$ and $\langle \sigma_N(g) \rangle$ are conjugate in $\Gal(h)$. In particular, there exists some integer $r$ relatively prime to the order of $g$ such that $\sigma_N(g)$ is conjugate to $g^r$ in $\Gal(h)$. 
\end{lemma}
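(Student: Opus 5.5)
The plan is to imitate Uchida's argument: realise $\langle g\rangle$ as the image of an absolute decomposition group, then transport that decomposition group with the local correspondence. Write $\rho: G=\Gal(S^3,\mathcal{L})\to G/V\cong \Gal(h)$ for the quotient map. Here $\Gal(h_1')=U_1/V\subset G/V$, and by Theorem \ref{thm:sigmaV} the map $\sigma$ descends to $\sigma_N:U_1/V\to U_2/V$.

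\textbf{Step 1: find a knot whose decomposition group maps onto $\langle g\rangle$.} Let $L\subset\mathcal{L}$ be the finite branch locus of $h$, which contains $L_1\cup L_2$. Apply Lemma \ref{lma:chebo} to $\rho$ restricted to $\pi_1(S^3\setminus L)\to \Gal(h)$ and to the conjugacy class of $g$. This gives a knot $K\in\mathcal{L}\setminus L$, unramified in $h$, whose Frobenius class $\rho([K])$ is the class of $g$. Since $K$ is unramified, the inertia group $I_{\overline K|K}=\langle\mu\rangle$ maps trivially to $\Gal(h)$. Hence $\rho(D_{\overline K|K})$ is cyclic, generated by the image of the longitude, i.e. by a Frobenius $\Frob_{K'|K}$. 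Replacing $\overline K$ by a $G$-conjugate, which changes $D_{\overline K|K}$ by conjugation, we may assume $\rho(D_{\overline K|K})=\langle g\rangle$.

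Because $g\in U_1/V$ and $V\subset U_1$, we get $D_{\overline K|K}\subset \rho^{-1}(\langle g\rangle)\subset U_1$. So $\overline K$ lies over a knot $K_1\in\mathcal{L}_{M_1}$ with $h_1(K_1)=K$ and $D_{\overline{K_1}|K_1}=D_{\overline K|K}\cap U_1=D_{\overline K|K}$.

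\textbf{Step 2: transport with the local correspondence.} By Theorem \ref{thm:second}, $\sigma(D_{\overline{K_1}|K_1})=D_{\overline{K_2}|K_2}$ for a knot $K_2\in\mathcal{L}_{M_2}$. Since $\sigma$ is characteristic-preserving, $h_2(K_2)=K$. The group $D_{\overline{K_2}|K_2}$ equals $D_{\overline{K}'|K}\cap U_2$ for some component $\overline K'$ of the universal cover lying over $K$. All such decomposition groups over $K$ are $G$-conjugate, so $D_{\overline K'|K}=\tau D_{\overline K|K}\tau^{-1}$ for some $\tau\in G$. Moreover $D_{\overline{K_2}|K_2}\cong\widehat{\mathbb{Z}}^2$ is open in $D_{\overline K'|K}$.

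\textbf{Step 3: reduce modulo $V$.} Using $\sigma(V)=V$, the image satisfies
\[
\sigma_N(\langle g\rangle)=\rho(\sigma(D_{\overline{K_1}|K_1}))=\rho(D_{\overline{K_2}|K_2})\subset \rho(\tau)\langle g\rangle\rho(\tau)^{-1}.
\]
Since $\sigma_N$ is injective, $\sigma_N(\langle g\rangle)$ has order $|g|$, so the inclusion is an equality. Then $\sigma_N(g)$ generates $\rho(\tau)\langle g\rangle\rho(\tau)^{-1}$, so $\sigma_N(g)=\rho(\tau)g^r\rho(\tau)^{-1}$ for some $r$ prime to the order of $g$. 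This gives both conclusions of the lemma.

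\textbf{Main obstacle.} The delicate point is Step 1 together with the comparison of orders in Step 3: we must show that $\rho(D_{\overline{K_2}|K_2})$ is the full conjugate of $\langle g\rangle$ and not a proper subgroup. If the counting argument above is not clean enough, the fallback is to choose $K$ totally indecomposable relative to the cyclic subcover fixed by $\rho^{-1}(\langle g\rangle)$. Then $D_{\overline K|K}\subset U_1$ is automatic, and the image comparison follows by applying the same reasoning to $\sigma^{-1}$, giving the reverse inclusion.
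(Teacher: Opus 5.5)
Your proposal is correct and follows essentially the same route as the paper. You use Lemma \ref{lma:chebo} to find an unramified knot whose Frobenius is $g$, then Theorem \ref{thm:second} with the characteristic-preserving hypothesis to land over the same base knot, and finally an order count (injectivity of $\sigma_N$ together with $\sigma(V)=V$) to upgrade the inclusion into a conjugate of $\langle g\rangle$ to an equality; your Step 3 already settles that comparison cleanly, so the fallback in your last paragraph is not needed.
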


\begin{proof}
Let $L_0 \subset \mathcal{L}$ be the branch locus of $h: N \to S^3$. Since $g \in U_1/V \subset G/V$, by Lemma \ref{lma:chebo} there exists $K \in L \setminus L_0$ with Frobenius conjugate to $g$ in $G/V$. Choose a lift $K'$ of $K$ to $N$ with $\text{Frob}_{K'|K} = g$. Since $K \notin L_0$, $K$ is unramified, so $D_{K'|K} = \langle g \rangle$. Since $\langle g \rangle \subset U_1/V$, the image $K_1$ of $K'$ in $M_1$ has degree 1 over $K$. By Theorem \ref{thm:second} and the characteristic-preserving hypothesis, $\sigma(D_{\overline{K_1}|K_1}) = D_{\overline{K_2}|K_2}$ for a unique $K_2 \in h_2^{-1}(K)$. By Theorem \ref{thm:sigmaV}, $\sigma(V) = V$, so $\sigma_N$ maps $D_{K'|K_1} = \langle g \rangle$ onto $D_{K''|K_2} = \langle \sigma_N(g) \rangle$ for some lift $K''$ of $K_2$ to $N$. Since $|\langle \sigma_N(g) \rangle| = |\langle g \rangle| = |D_{K'|K}|$, and $D_{K''|K_2} \subset D_{K''|K}$ with $|D_{K''|K}| = |D_{K'|K}|$ (conjugate decomposition groups at lifts of the same base knot), the inclusion $D_{K''|K_2} \subset D_{K''|K}$ is an equality. Then $D_{K''|K}$ is conjugate to $D_{K'|K} = \langle g \rangle$ in $G/V$, so $\langle g \rangle$ and $\langle \sigma_N(g) \rangle$ are conjugate in $\Gal(h)$.
\end{proof}

\begin{theorem}[Restatement of Theorem \ref{thm:main}]
Let $\mathcal{L} \subset S^3$ be a stably Chebotarev link. Let $h_i: M_i \to S^3 \in \Cov(S^3, \mathcal{L})$, $i = 1, 2$ be two coverings of $S^3$ branched over finite sublinks $L_i \subset \mathcal{L}$. Then:
\begin{enumerate}
    \item Any isomorphism $\sigma: \Gal(M_1, \mathcal{L}_{M_1}) \to \Gal(M_2, \mathcal{L}_{M_2})$ induces a bijection $\sigma_*: \mathcal{L}_{M_1} \rightarrow \mathcal{L}_{M_2}$.
    \item For any isomorphism which preserves characteristic, there exists a unique $\alpha\in\Gal(S^3,\mathcal{L})$ giving an isomorphism of covers $\alpha: M_1 \to M_2$, which induces $\sigma$.
\end{enumerate} 
\end{theorem}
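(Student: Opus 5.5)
Part (1) follows directly from Theorem \ref{thm:second}(1). Define $\sigma_*(K_1)$ to be the unique $K_2$ with $\sigma(D_{\overline{K_1}|K_1}) = D_{\overline{K_2}|K_2}$. Applying the same theorem to $\sigma^{-1}$ gives an inverse map. Corollary \ref{cor:intersection} ensures that different knots cannot share a decomposition group, so $\sigma_*$ is a bijection. One caveat: a knot has many conjugate decomposition groups, one for each choice of $\overline{K_1}$, so I would check that the resulting $K_2$ does not depend on that choice. Conjugating inside $U_1$ is carried by $\sigma$ to conjugating inside $U_2$, and conjugation inside $U_2$ keeps us over the same knot of $M_2$, so the choice does not matter.

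For part (2) I would follow the three-claim structure announced at the start of Section \ref{sec:proof}.

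\textbf{Claim 1.} Fix a finite normal cover $N$ through which both $M_1$ and $M_2$ factor, and put $V=\Gal(N,\mathcal{L}_N)$. By Theorem \ref{thm:sigmaV}, $\sigma$ descends to an isomorphism $\sigma_N:U_1/V\to U_2/V$ inside $\Gal(h)=G/V$. I want to show $\sigma_N$ is conjugation by some $\bar\alpha\in G/V$. Following Uchida, I would set up the embedding problem
\[1\to \mathbb{F}_p[G/V]\to \mathbb{F}_p[G/V]\rtimes G/V\to G/V\to 1.\]
Lemma \ref{lma:embdding} gives a surjective solution, which corresponds to a larger normal cover $\tilde N$ with $\tilde V=\Gal(\tilde N,\mathcal{L}_{\tilde N})$. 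Applying Theorem \ref{thm:sigmaV} to $\tilde V$ and to its subgroups between $\tilde V$ and $V$, $\sigma$ induces $\sigma_{\tilde N}$ on $U_i/\tilde V$, and this contains the kernel $\mathbb{F}_p[G/V]$ of the extension.

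Next I apply Lemma \ref{lma:uchida2} at level $\tilde N$. Every element $x$ of $\mathbb{F}_p[G/V]$ and of the preimage of $U_1/V$ generates a cyclic group conjugate to that of its image. So $\sigma_{\tilde N}$ carries each $\mathbb{F}_p$-line in $\mathbb{F}_p[G/V]$ to a $G/V$-conjugate line, up to the twist by $\sigma_N$. I would then run Uchida's idempotent computation. Take $x=\sum_{g\in U_1/V}g\cdot e$, and compare stabilisers and orbits of such elements under the $U_i/V$-actions. This should force an element $\bar\alpha\in G/V$ with $\sigma_N(u)=\bar\alpha u\bar\alpha^{-1}$ for all $u\in U_1/V$, and with $\bar\alpha U_1\bar\alpha^{-1}=U_2$. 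This is the step I expect to be the main obstacle. It is purely group-theoretic, though, so I would transcribe the argument of Neukirch--Schmidt--Wingberg XII.2 essentially verbatim. The topological inputs are Lemma \ref{lma:embdding}, Lemma \ref{lma:uchida2} and the characteristic-preserving hypothesis, which is already built into Lemma \ref{lma:uchida2}.

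\textbf{Claim 2.} For each finite normal $V$ as above, let $A_V\subset G/V$ be the set of $\bar\alpha$ such that conjugation by $\bar\alpha$ equals $\sigma_N$ on $U_1/V$. By Claim 1 each $A_V$ is nonempty and finite, and projection maps $A_{V'}$ into $A_V$ whenever $V'\subset V$. By compactness (König's lemma / nonempty inverse limit of finite sets), there is $\alpha\in\varprojlim A_V\subset G$. Such an $\alpha$ satisfies $\sigma(u)=\alpha u\alpha^{-1}$ on $U_1$, because the normal $V$ form a basis of neighbourhoods of $1$. It also satisfies $\alpha U_1\alpha^{-1}=U_2$, so by the Galois correspondence $\alpha$ induces an isomorphism of covers $M_1\to M_2$ over $S^3$.

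\textbf{Claim 3 (uniqueness).} If $\alpha,\alpha'$ both induce $\sigma$, then $\beta=\alpha^{-1}\alpha'$ centralises $U_1$, which is open in $G$. I would pick a knot $K$ and two distinct decomposition groups $D_1,D_2\subset U_1$ lying over it. Since $\beta$ centralises them, $\beta$ normalises each $D_i$. Using Corollary \ref{cor:intersection}, a nontrivial element should not be able to centralise decomposition groups at infinitely many distinct knots, so $\beta=1$. Concretely, $\beta D_i\beta^{-1}=D_i$ means $\beta$ fixes $\overline{K_i}$ for all knots $K_i$ of $\mathcal{L}_{M_1}$. By Theorem \ref{thm:profinitetorus} the $D_{\overline{K_i}}$ are abelian, so $\beta\in D_{\overline{K_1}}\cap D_{\overline{K_2}}$, and this intersection is trivial.
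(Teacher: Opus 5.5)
Your part (1), Claim 1 and Claim 2 follow the paper's route. Part (1) is read off from Theorem \ref{thm:second}. The finite-level statement comes from feeding Lemma \ref{lma:embdding}, Theorem \ref{thm:sigmaV} and Lemma \ref{lma:uchida2} into Uchida's computation, and compactness then produces $\alpha$.

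Your own sketch of the group-theoretic step is not what does the work, though. The norm element $\sum_{g\in U_1/V}g$ on its own at best gives $U_2/V=g_1(U_1/V)g_1^{-1}$. The paper's argument uses three facts:
\begin{itemize}
\item $\sigma_{\widetilde N}$ is $\sigma_N$-semilinear for the $U_1/V$-action on $\mathbb{F}_p[G/V]$;
\item Lemma \ref{lma:uchida2}, applied to the element $1$, forces $\sigma_{\widetilde N}(1)=rg_0$;
\item Theorem \ref{thm:sigmaV} makes each left ideal $\mathbb{F}_p[G/V]\epsilon$ stable for idempotents $\epsilon\in\mathbb{F}_p[U_1/V]$, whence $\sigma_N(\epsilon)g_0=g_0\epsilon$.
\end{itemize}
Choosing $p\equiv 1\pmod{|G/V|}$ then supplies enough idempotents to recover $\sigma_N(g)g_0=g_0g$. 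So you must fix $p$ this way when you pose the embedding problem.

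Your Claim 3 is genuinely different from the paper's, and better. The paper argues that $\alpha,\beta\in\bigcap_N S_N$ forces $\alpha^{-1}\beta\in\bigcap_N V_N=1$. But membership in $S_N$ only determines an element modulo the preimage of the centralizer of $U_1/V$ in $G/V$. So that argument silently needs $Z_G(U_1)=1$. Your argument proves exactly this fact, the analogue of the classical statement that open subgroups of $G_{\mathbb{Q}}$ have trivial centralizer.

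Two steps should be made explicit.
\begin{itemize}
\item Since $\beta$ normalises $U_1$ and centralises $D_{\overline K}\cap U_1$, you get $D_{\overline K}\cap U_1=D_{\beta\overline K}\cap U_1$. If $\beta\overline K\neq\overline K$, pass to a finite cover $M'$ in which the two components separate and intersect with $\Gal(M',\mathcal{L}_{M'})$. This contradicts Corollary \ref{cor:intersection}, since these groups are $\cong\widehat{\mathbb{Z}}^2$ by Theorem \ref{thm:profinitetorus} and hence nontrivial.
\item To conclude from $\beta\in D_{\overline{K_1}}\cap D_{\overline{K_2}}$ (full decomposition groups in $G$), take $K_1,K_2$ over distinct knots of $\mathcal{L}$, so that Corollary \ref{cor:intersection} applies with $M=S^3$.
\end{itemize}
Corollary \ref{cor:intersection} is stated for knots of a single finite cover. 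For two lifts of the same knot of $\mathcal{L}$ it only controls intersections inside $\Gal(M',\mathcal{L}_{M'})$, which need not contain $\beta$. So drop the sentence about two decomposition groups over the same $K$. Abelianness of the $D_{\overline{K_i}}$ plays no role.
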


We prove this by synthesizing our existing results to invoke Uchida's argument in \cite{uchida}, which is purely algebraic.

\begin{proof}[Proof of Theorem \ref{thm:main}]
For any $h: N \to S^3$ a normal finite cover of $M_1$ and $M_2$, we wish to construct a global element $\alpha^N \in G = \Gal(S^3, \mathcal{L})$ that induces the finite-level isomorphism $\sigma_N$. Let $n = |\Gal(h)|$, and pick a prime number $p \equiv 1$ (mod $n$). By Lemma \ref{lma:embdding}, the split embedding problem for $\mathbb{F}_p[\Gal(h)]$ admits a proper topological solution, i.e. there exists a surjection $\psi: G \to E$ such that the diagram from that lemma commutes. This guarantees the existence of a normal branched covering $\widetilde{h}: \widetilde{N} \to S^3 \in \Cov(S^3, \mathcal{L})$ factoring through $N$, such that the intermediate cover $\widetilde{h'}: \widetilde{N} \to N$ has Galois group isomorphic to the group ring $\mathbb{F}_p[\Gal(h)]$, and $\Gal(\widetilde{h}) = E$. Let $V_{\widetilde{N}} = \Gal(\widetilde{N}, \mathcal{L}_{\widetilde{N}}))$; this is the kernel of $\psi$, hence normal. Since $V_{\widetilde{N}} \subset U_1 \cap U_2$, Theorem \ref{thm:sigmaV} gives $\sigma(V_{\widetilde{N}}) = V_{\widetilde{N}}$. Consequently, $\sigma$ induces an isomorphism at this finite level: $\sigma_{\widetilde{N}}: E \to E$. Since $\sigma(V_{\widetilde{N}}) = V_{\widetilde{N}}$, the isomorphism $\sigma$ descends to an automorphism $\sigma_N: \Gal(h_1) \to \Gal(h_2)$, and restricts to an automorphism $T$ of the abelian kernel $\mathbb{F}_p[\Gal(h)]$. 
\begin{equation}
\begin{tikzcd}[row sep=2.5em, column sep=2em]
& \widetilde{N} \arrow[d, "{\mathbb{F}_p[G/V]}"'] & & & V_{\widetilde{N}} \arrow[d, hook] \\
& N \arrow[dl, "{U_1/V}"'] \arrow[dr, "{U_2/V}"] & & & V \arrow[dl, hook] \arrow[dr, hook] \\
M_1 \arrow[dr, "h_1"'] & & M_2 \arrow[dl, "h_2"] & U_1 \arrow[dr, hook] & & U_2 \arrow[dl, hook] \\
& S^3 & & & G &
\end{tikzcd}
\end{equation}
\underline{Claim 1: There exists some $g_0 \in \Gal(h)$ such that $\sigma_N(g) = g_0gg_0^{-1}$.}

\medskip

Every element of $\mathbb{F}_p[\Gal(h)]$ can be written as $\sum a_gg$ with $a_g \in \mathbb{F}_p, g \in \Gal(h)$. If $g \in U_1/V$, we have 
\begin{equation}
    \sigma_{\widetilde{N}}(g) = \sigma_{\widetilde{N}}(g \cdot 1) = \sigma_N(g)\sigma_{\widetilde{N}}(1)
\end{equation}
because the operation of $g$ on $\mathbb{F}_p[\Gal(h)]$ is induced from an inner automorphism by an element of $U_1/V_{\widetilde{N}}$. Consider $\mathbb{F}_p[U_1/V]$, as a subring of $\mathbb{F}_p[\Gal(h)]$. For any element $\alpha = \sum a_gg \in \mathbb{F}_p[U_1/V]$, we have
\begin{equation}
    \sigma_N(\alpha) = \sum a_g\sigma_N(g)
\end{equation}
Thus,
\begin{equation}
    \sigma_{\widetilde{N}}(\alpha) = \sigma_N(\alpha)\sigma_{\widetilde{N}}(1)
\end{equation}
for $\alpha \in \mathbb{F}_p[U_1/V]$. Let $\epsilon$ be an idempotent of $\mathbb{F}_p[U_1/V]$. Since $A\epsilon$ corresponds to a normal subgroup of $G$ contained in $V_{\widetilde{N}} \subset U_1 \cap U_2$, it follows from Theorem \ref{thm:sigmaV} $A\epsilon$ is $\sigma_{\widetilde{N}}$-invariant, i.e. $\sigma_{\widetilde{N}}(A\epsilon) = A_\epsilon$. Hence, 
\begin{equation}
    \sigma_{\widetilde{N}}(\epsilon) = \sigma_N(\epsilon)\sigma_{\widetilde{N}}(1) = \beta\epsilon
\end{equation}
for some $\beta \in \mathbb{F}_p[\Gal(h)]$. Since $1 - \epsilon$ is also idempotent in $\mathbb{F}_p[U_1/V]$, 
\begin{equation}
    \sigma_{\widetilde{N}}(1 - \epsilon) = \gamma(1 - \epsilon)
\end{equation}
for some $\gamma \in \mathbb{F}_p[\Gal(h)]$. Thus,
\begin{equation}   
    \sigma_{\widetilde{N}}(1) =  \sigma_{\widetilde{N}}(\epsilon) + \sigma_{\widetilde{N}}(1 - \epsilon) = \beta\epsilon + \gamma(1 - \epsilon)
\end{equation}
Multiplying $\epsilon$ on the right, we have 
\begin{equation}
    \sigma_{\widetilde{N}}(\epsilon) = \sigma_{N}(\epsilon)\sigma_{\widetilde{N}}(1) = \beta\epsilon = \sigma_{\widetilde{N}}(1)\epsilon
\end{equation}
Lemma \ref{lma:uchida2} applies to $g = 1 \in \mathbb{F}_p[\Gal(h)] \hookrightarrow E$, with $\widetilde{N}$ in the role of common normal cover. The element $1 \in \mathbb{F}_p[G] = V/V_{\widetilde{N}}$ lies in $U_1/V_{\widetilde{N}} = \Gal(\widetilde{N} \to M_1)$, so the lemma gives that $\langle 1 \rangle$ and $\langle \sigma_{\widetilde{N}}(1) \rangle$ are conjugate in $E = \Gal(\widetilde{h})$. The element 1 has order $p$ in the additive group of $\mathbb{F}_p[\Gal(h)]$. Lemma \ref{lma:uchida2} then gives that $\langle 1 \rangle$ and $\langle \sigma_{\widetilde{N}}(1) \rangle$ are conjugate in $E$, so $\sigma_{\widetilde{N}}(1)$ is conjugate to $r \cdot 1$ for some $r \in \mathbb{F}_p^\times$. It remains to identify the conjugates of 1 in $E$. In the semidirect product $E = \mathbb{F}_p[\Gal(h)] \rtimes \Gal(h)$, conjugation by $g \in \Gal(h)$ sends $1 \in \mathbb{F}_p[\Gal(h)]$ to $g \cdot 1 \cdot g^{-1} = g$, since $g$ acts on the group ring by left multiplication on the basis. Since $\mathbb{F}_p[\Gal(h)]$ is abelian, conjugation within it fixes 1. Conjugation by $g \in \Gal(h)$ gives $g$ by the left module structure, so the conjugacy class of 1 in $E$ is exactly $\Gal(h)$ viewed as basis elements of the group ring. Since $\sigma_{\widetilde{N}}(1)$ is conjugate to $r \cdot 1$ and the conjugates of 1 are $\Gal(h)$, we conclude that $\sigma_{\widetilde{N}}(1) = rg_0$ for some $g_0 \in \Gal(h)$. Thus
\begin{equation}
    \sigma_N(\epsilon)g_0 = g_0\epsilon
\end{equation}
for every idempotent $\epsilon \in \mathbb{F}_p[U_1/V]$. Let $g \in U_1/V$ and let $m$ be its order. Since $p \equiv 1$ (mod $m$), $\mathbb{F}_p$ contains a primitive $m$-th root of unity $\mu$. Then
\begin{equation}
    \epsilon_i = m^{-1}(1 + \mu^ig + \dots + \mu^{(m-1)i}g^{m-1})
\end{equation}
are idempotents in $\mathbb{F}_p[U_1/V]$. Thus, we have 
\begin{equation}
    m^{-1}(1 + \mu^i\sigma_N(g) + \dots + \mu^{(m-1)i}\sigma_N(g)^{m-1})g_0 = m^{-1}g_0(1 + \mu^ig + \dots + \mu^{(m-1)ig^{m-1}})
\end{equation}
Multiplying $\mu^{-i}$ and summing by all $i$, we get
\begin{equation}
    \sigma_N(g)g_0 = g_0g
\end{equation}
and hence
\begin{equation}
    \sigma_N(g) = g_0gg_0^{-1}
\end{equation}
as desired.

\medskip

\underline{Claim 2: There exists $\alpha \in G$ whose restriction to any finite Galois cover $N$ induces $\sigma_N$.}

\medskip

For each finite normal cover $h_N: N \to S^3 \in \Cov(S^3, \mathcal{L})$, let 
\begin{equation}
    S_N = \{g \in G \mid \sigma_N = \text{conj}(g) \text{ on } G/V\}
\end{equation}
By Claim 1, $S_N$ is nonempty for every such $N$. Since $S_N$ is a union of cosets of $V_N$ (the kernel of $G \to \Gal(h_N)$) in $G$, it is a nonempty compact subset of $G$. Let $N_1, \dots, N_m$ be any finite collection of open normal subgroups on $G$. Then
\begin{equation}
    S_{N_1} \cap \dots S_{N_m} \supset S_{N_1 \cap \dots N_m}
\end{equation}
which is nonempty by Claim 1 applied to $N_1 \cap \dots \cap N_m$. Therefore, $\{S_N\}$ has the finite intersection property. By compactness, their intersection $\bigcap_NS_N$ is non-empty. Any element $\alpha \in \bigcap_N S_N$ globally induces the isomorphism $\sigma$.

\medskip

\underline{Claim 3: $\alpha$ is unique.} 

\medskip

Both $\alpha$ and $\beta$ lie in $\bigcap_NS_N$ by construction in Claim 2. Therefore, $\alpha^{-1}\beta \in \bigcap_N\ker(G \to \Gal(N \to S^3))$ for every finite cover $N$. Since $G = \lim_{\leftarrow N}\Gal(N \to S^3)$ by definition, this intersection is trivial. Hence $\alpha = \beta$. This completes the proof.
\end{proof}

\section{Further questions on links behaving like primes}\label{sec:future}

\subsection{Characteristic-preserving isomorphisms}\label{sec:charpres}

We highlight several differences between links and primes within the arithmetic topology dictionary which remain unaddressed. Further study of these phenomena would deepen the connection between links and primes. 

\begin{enumerate}
    \item Prime numbers are intrinsically different to each other in size; group-theoretically, one can distinguish them via the maximal Galois group ramified outside $p$. Although this is similar how prime knots are determined by their fundamental groups. In stably Chebotarev links, we do not require the knot components are prime knots or distinct; in the planetary link of the figure-eight knot, all topological link types exist \cite{ghrist}.
    \item  When distinguishing decomposition groups in number fields, an important tool is decomposing decomposition groups into pro-$p$ components. This is clear in the \emph{wild inertia group}, but recently \cite{karshon2026pro} it has worked when considering tame inertia. This comes from the fact that an abelian profinite group has an intrinsic $\widehat{\mathbb{Z}}$-module structure, which is equipped with an action of all distinct primes $p$. So $\mathbb{Z}$ and primes play two different roles. On the other hand, absolute decomposition groups of knots do not possess this extra module structure; the extra role of $\mathbb{Z}$ is not seen. 
    \item Certain knots like the figure-eight knot can be \emph{universal}, meaning that any closed orientable 3-manifold can be realized as a branched cover over that knot \cite{hilden}. In addition, every closed orientable 3-manifold can be a 3-fold branched covering space over a link in $S^3$ \cite{hilden3fold}. The analogous statements in number theory are false, indicating that branched coverings possess less intrinsic rigidity than the number field extensions. 
\end{enumerate} 

The above three observations are all reflections of the fact that within the arithmetic topology framework used in this paper, the topology of knots alone lacks the rigidity of primes in number fields. These phenomena indicate a need for some additional structure beyond pure topology to force rigidity. The characteristic-preserving hypothesis of Theorem \ref{thm:main} is the precise additional structure needed to supply the missing rigidity at the level of profinite group isomorphisms. 

\medskip

The extra structure required for full rigidity may also be provided by geometry, particularly hyperbolic geometry, as speculated by Mazur \cite{mazur}. Hyperbolic 3-manifold groups are conjectured to be profinitely rigid \cite{reid}. Recently, Xu \cite{xu} prove that peripheral subgroups of hyperbolic 3-manifolds are detected by their profinite completions, indicating that our absolute decomposition groups may be distinguished when Chebotarev links are ``sufficiently hyperbolic". 

\medskip

A correct analogy of the full set of prime numbers should be a stably Chebotarev link $\mathcal{L}$ which intrinsically satisfies such a rigidity, and in particular should have the property that isomorphisms between finite-index open subgroups are characteristic-preserving. For such link our Neukirch--Uchida theorem will apply for any group theoretic isomorphism, reflecting the full number-theoretic situation. Owing to the above discussion, we propose the planetary link of the figure-eight knot as a primary candidate due to its unique dynamical rigidity. Crucially, adjoining any finite sublink of these planetary orbits preserves the hyperbolicity of the complement in $S^3$ \cite{ueki2021chebotarev}, which may be sufficient to guarantee the characteristic-preserving condition intrinsically. 

\medskip

In addition, it would be interesting to know if there are any stably Chebotarev links which behave differently from the set of all prime numbers, and so we pose the following question.

\begin{question}
Does there exist a stably Chebotarev link $\mathcal{L} \subset S^3$ and an isomorphism between two open subgroups of $\Gal(S^3, \mathcal{L})$ which does not preserve characteristic? Does this isomorphism induce any topological relationship between the corresponding branched covers? 
\end{question}

\subsection{Local-global principles and Poitou--Tate duality}

When one works with a finite link $L$ whose boundary fundamental groups are $\pi_1$-injective, the absolute Galois group $\Gal(M, L)$ is the profinite completion $\widehat{\pi}_1(X)$ where $X = M \setminus V_L^\circ$. Using the argument of Lemma \ref{lma:lefschetz} and Lefschetz duality, we get the full exact sequence:
\begin{align}
\begin{split}
    0 &\to H^0(\widehat{\pi}_1(X), A) \to H^0\left(B ,A\right) \to H^2(\widehat{\pi}_1(X), A)^\vee \\ &\to H^1(\widehat{\pi}_1(X), A) \to H^1\left(B, A\right) \to H^1(\widehat{\pi}_1(X), A)^\vee \\\ &\to H^2(\widehat{\pi}_1(X), A) \to H^2\left(B, A\right) \to H^0(\widehat{\pi}_1(X), A)^\vee \to 0
\end{split}
\end{align}
where $B = \prod_{K \in \mathcal{L}_M}D_{\overline{K}|K}$. An analogue of the celebrated Poitou--Tate duality would be a duality sequence for a suitable stably Chebotarev link. Theorem \ref{thm:exact} establishes for any finite module $A$ with trivial action, we have an exact sequence
\begin{equation}
    H^1(\Gal(M, \mathcal{L}_M), A) \to P^1(\Gal(M, \mathcal{L}_M), A) \to H^1(\Gal(M, \mathcal{L}_M), A)^\vee
\end{equation}
where $P^1$ denotes the appropriate restricted product of local cohomology groups. This exact sequence can be viewed as the middle three terms of a 3-manifold Poitou--Tate sequence, and we pose the question of whether this extends to the full Poitou--Tate sequence. In addition, we hope to extend this result to more general coefficients, namely $\Gal(M, \mathcal{L}_M)$-modules $A$ that are finitely generated as $\mathbb{Z}$-modules. We thus pose a general question:

\begin{question}\label{q:poitoutate}
Let $G = \Gal(M, \mathcal{L}_M)$. Does the exact sequence of Theorem \ref{thm:exact} extend to a full topological Poitou--Tate sequence
\begin{align}
\begin{split}
    0 &\to H^0(G, A) \to P^0(G, A) \to H^2(G, A')^\vee \\ &\to H^1(G, A) \to P^1(G, A) \to H^1(G, A')^\vee \\ &\to H^2(G, A) \to P^2(G, A) \to H^0(G, A')^\vee \to 0
\end{split}
\end{align}
\end{question}

An important local-global principle, which will follow from a positive answer to the above, is the next natural question:

\begin{question}
Is the natural restriction map
\begin{equation}
    \varphi^2: H^2(\Gal(M, \mathcal{L}_M), A) \to \prod_{K \in \mathcal{L}_M}H^2(D_{\overline{K}|K}, A)
\end{equation}
injective?
\end{question}

\begin{remark}
The exact definition of $A'$ is unclear in the general setup. In number theory $A'=\text{Hom}(A,\mathcal{O}_{K,S})$, and the proof of duality uses the idele class group, and the fact that it is a formation module (see \cite{poitou} and \cite{haberland1978galois}).
Although in \cite{uekiniibo} idele class groups were discussed, they are not class formations in general (Remark 8.2 in \cite{uekiniibo}). So, we expect that either or further conditions on the link $\mathcal{L}$ or further modification of the notation of the idele class group for 3-manifolds is needed.
\end{remark}

\bibliographystyle{abbrv}
\bibliography{refs}	

@book{neukirch2013cohomology,
  title={Cohomology of number fields},
  author={Neukirch, J{\"u}rgen and Schmidt, Alexander and Wingberg, Kay},
  volume={323},
  year={2013},
  publisher={Springer Science \& Business Media}
}

@book{morishita2012knots,
  title={Knots and primes},
  author={Morishita, Masanori},
  year={2012},
  publisher={Springer}
}

@article{ueki2021chebotarev,
  title={Chebotarev links are stably generic},
  author={Ueki, Jun},
  journal={Bulletin of the London Mathematical Society},
  volume={53},
  number={1},
  pages={82--91},
  year={2021},
  publisher={Wiley Online Library}
}

@article{haberland1978galois,
  title={Galois cohomology of algebraic number fields},
  author={Haberland, Klaus and Koch, Helmut and Zink, Thomas},
  journal={(No Title)},
  year={1978}
}

@article{mcmullen2013knots,
  title={Knots which behave like the prime numbers},
  author={McMullen, Curtis T},
  journal={Compositio Mathematica},
  volume={149},
  number={8},
  pages={1235--1244},
  year={2013},
  publisher={London Mathematical Society}
}

@article{parry1990zeta,
  title={Zeta functions and the periodic orbit structure of hyperbolic dynamics},
  author={Parry, William and Pollicott, Mark},
  journal={Ast{\'e}risque},
  volume={187},
  number={188},
  pages={1--268},
  year={1990}
}

@article {uekiniibo,
    AUTHOR = {Niibo, Hirofumi and Ueki, Jun},
     TITLE = {Id\`elic class field theory for 3-manifolds and very
              admissible links},
   JOURNAL = {Trans. Amer. Math. Soc.},
  FJOURNAL = {Transactions of the American Mathematical Society},
    VOLUME = {371},
      YEAR = {2019},
    NUMBER = {12},
     PAGES = {8467--8488},
      ISSN = {0002-9947,1088-6850},
   MRCLASS = {57M12 (11R37 57M99)},
  MRNUMBER = {3955553},
MRREVIEWER = {Igor\ V.\ Nikolaev},
       DOI = {10.1090/tran/7480},
       URL = {https://doi.org/10.1090/tran/7480},
}

@article{mostow,
  title={Quasi-conformal mappings in $ n $-space and the rigidity of hyperbolic space forms},
  author={Mostow, George D},
  journal={Publications Math{\'e}matiques de l'IH{\'E}S},
  volume={34},
  pages={53--104},
  year={1968}
}

@incollection {mazur,
    AUTHOR = {Mazur, Barry},
     TITLE = {Primes, knots and {P}o},
 BOOKTITLE = {Essays on topology---dedicated to {V}alentin {P}o\'enaru},
     PAGES = {13--31},
 PUBLISHER = {Springer, Cham},
      YEAR = {2025},
      ISBN = {978-3-031-81413-6; 978-3-031-81414-3},
   MRCLASS = {57K10 (01A70 11R32)},
  MRNUMBER = {4944626},
       DOI = {10.1007/978-3-031-81414-3\_2},
       URL = {https://doi.org/10.1007/978-3-031-81414-3_2},
}

@article{mazur2,
    AUTHOR = {Mazur, Barry},
    TITLE = {Remarks on the Alexander polynomial},
 Journal = {Unpublished note},
 YEAR = {1964},
    URL = {https://bpb-us-e1.wpmucdn.com/sites.harvard.edu/dist/a/189/files/2023/01/Remarks-on-the-Alexander-Polynomial.pdf}
}

@article{mazur1973notes,
  title={Notes on {\'e}tale cohomology of number fields},
  author={Mazur, Barry},
  Journal={Annales scientifiques de l'{\'E}cole Normale Sup{\'e}rieure},
  volume={6},
  number={4},
  pages={521--552},
  year={1973}
}

@article {uekibranched,
    AUTHOR = {Ueki, Jun},
     TITLE = {On the homology of branched coverings of 3-manifolds},
   JOURNAL = {Nagoya Math. J.},
  FJOURNAL = {Nagoya Mathematical Journal},
    VOLUME = {213},
      YEAR = {2014},
     PAGES = {21--39},
      ISSN = {0027-7630,2152-6842},
   MRCLASS = {57M12 (11R29 11R32 12G05)},
  MRNUMBER = {3290684},
MRREVIEWER = {Igor\ V.\ Nikolaev},
       DOI = {10.1215/00277630-2393795},
       URL = {https://doi.org/10.1215/00277630-2393795},
}

@article {mihara,
    AUTHOR = {Mihara, Tomoki},
     TITLE = {Cohomological approach to class field theory in arithmetic
              topology},
   JOURNAL = {Canad. J. Math.},
  FJOURNAL = {Canadian Journal of Mathematics. Journal Canadien de
              Math\'ematiques},
    VOLUME = {71},
      YEAR = {2019},
    NUMBER = {4},
     PAGES = {891--935},
      ISSN = {0008-414X,1496-4279},
   MRCLASS = {55N20 (11R37 11Z05 18F15 57P05)},
  MRNUMBER = {3984024},
MRREVIEWER = {Anatoly\ N.\ Kochubei},
       DOI = {10.4153/cjm-2018-020-0},
       URL = {https://doi.org/10.4153/cjm-2018-020-0},
}

@book {washington,
    AUTHOR = {Washington, Lawrence C.},
     TITLE = {Introduction to cyclotomic fields},
    SERIES = {Graduate Texts in Mathematics},
    VOLUME = {83},
 PUBLISHER = {Springer-Verlag, New York},
      YEAR = {1982},
     PAGES = {xi+389},
      ISBN = {0-387-90622-3},
   MRCLASS = {11-01 (11R18 11R23)},
  MRNUMBER = {718674},
MRREVIEWER = {T.\ Mets\"ankyl\"a},
       DOI = {10.1007/978-1-4684-0133-2},
       URL = {https://doi.org/10.1007/978-1-4684-0133-2},
}

@article {neukirchcrelle,
    AUTHOR = {Neukirch, J\"urgen},
     TITLE = {Kennzeichnung der endlich-algebraischen {Z}ahlk\"orper durch
              die {G}aloisgruppe der maximal aufl\"osbaren {E}rweiterungen},
   JOURNAL = {J. Reine Angew. Math.},
  FJOURNAL = {Journal f\"ur die Reine und Angewandte Mathematik. [Crelle's
              Journal]},
    VOLUME = {238},
      YEAR = {1969},
     PAGES = {135--147},
      ISSN = {0075-4102,1435-5345},
   MRCLASS = {12.45},
  MRNUMBER = {258804},
MRREVIEWER = {John\ Labute},
       DOI = {10.1515/crll.1969.238.135},
       URL = {https://doi.org/10.1515/crll.1969.238.135},
}

@article {neukirchinvenciones,
    AUTHOR = {Neukirch, J\"urgen},
     TITLE = {Kennzeichnung der {$p$}-adischen und der endlichen
              algebraischen {Z}ahlk\"orper},
   JOURNAL = {Invent. Math.},
  FJOURNAL = {Inventiones Mathematicae},
    VOLUME = {6},
      YEAR = {1969},
     PAGES = {296--314},
      ISSN = {0020-9910,1432-1297},
   MRCLASS = {12.50},
  MRNUMBER = {244211},
MRREVIEWER = {John\ Labute},
       DOI = {10.1007/BF01425420},
       URL = {https://doi.org/10.1007/BF01425420},
}

@article {uchida,
    AUTHOR = {Uchida, K\^oji},
     TITLE = {Isomorphisms of {G}alois groups},
   JOURNAL = {J. Math. Soc. Japan},
  FJOURNAL = {Journal of the Mathematical Society of Japan},
    VOLUME = {28},
      YEAR = {1976},
    NUMBER = {4},
     PAGES = {617--620},
      ISSN = {0025-5645,1881-1167},
   MRCLASS = {12A55},
  MRNUMBER = {432593},
MRREVIEWER = {Jack\ Sonn},
       DOI = {10.2969/jmsj/02840617},
       URL = {https://doi.org/10.2969/jmsj/02840617},
}

@incollection {grothendieck,
    AUTHOR = {Grothendieck, Alexandre},
     TITLE = {Esquisse d'un programme},
 BOOKTITLE = {Geometric {G}alois actions, 1},
    SERIES = {London Math. Soc. Lecture Note Ser.},
    VOLUME = {242},
     PAGES = {5--48},
      NOTE = {With an English translation on pp.\ 243--283},
 PUBLISHER = {Cambridge Univ. Press, Cambridge},
      YEAR = {1997},
      ISBN = {0-521-59642-4},
   MRCLASS = {14H10 (14F20 14H30)},
  MRNUMBER = {1483107},
MRREVIEWER = {Hiroaki\ Nakamura and Yasutaka\ Ihara},
}

@article{mnt,
  title={The Grothendieck conjecture on the fundamental groups of algebraic curves},
  author={Nakamura, Hiroaki and Tamagawa, Akio and Mochizuki, Shinichi},
  journal={Sugaku Expositions},
  volume={14},
  number={1},
  pages={31--54},
  year={2001},
  publisher={Providence, RI, USA: The Society, c1988-}
}

@article {xu,
    AUTHOR = {Xu, Xiaoyu},
     TITLE = {Profinite almost rigidity in 3-manifolds},
   JOURNAL = {Adv. Math.},
  FJOURNAL = {Advances in Mathematics},
    VOLUME = {480},
      YEAR = {2025},
     PAGES = {Paper No. 110505, 79},
      ISSN = {0001-8708,1090-2082},
   MRCLASS = {57K30 (20F34 20F65 57M05 57M10 57M50)},
  MRNUMBER = {4953004},
       DOI = {10.1016/j.aim.2025.110505},
       URL = {https://doi.org/10.1016/j.aim.2025.110505},
}

@inproceedings {reid,
    AUTHOR = {Reid, Alan W.},
     TITLE = {Profinite rigidity},
 BOOKTITLE = {Proceedings of the {I}nternational {C}ongress of
              {M}athematicians---{R}io de {J}aneiro 2018. {V}ol. {II}.
              {I}nvited lectures},
     PAGES = {1193--1216},
 PUBLISHER = {World Sci. Publ., Hackensack, NJ},
      YEAR = {2018},
      ISBN = {978-981-3272-91-0; 978-981-3272-87-3},
   MRCLASS = {20E18 (57M07)},
  MRNUMBER = {3966805},
MRREVIEWER = {Gareth\ Wilkes},
}

@book {3groups,
    AUTHOR = {Aschenbrenner, Matthias and Friedl, Stefan and Wilton, Henry},
     TITLE = {3-manifold groups},
    SERIES = {EMS Series of Lectures in Mathematics},
 PUBLISHER = {European Mathematical Society (EMS), Z\"urich},
      YEAR = {2015},
     PAGES = {xiv+215},
      ISBN = {978-3-03719-154-5},
   MRCLASS = {57N10 (20F65 57-02 57M27)},
  MRNUMBER = {3444187},
MRREVIEWER = {Thomas\ Koberda},
       DOI = {10.4171/154},
       URL = {https://doi.org/10.4171/154},
}

@article {ghrist,
    AUTHOR = {Ghrist, Robert and Kin, Eiko},
     TITLE = {Flowlines transverse to knot and link fibrations},
   JOURNAL = {Pacific J. Math.},
  FJOURNAL = {Pacific Journal of Mathematics},
    VOLUME = {217},
      YEAR = {2004},
    NUMBER = {1},
     PAGES = {61--86},
      ISSN = {0030-8730,1945-5844},
   MRCLASS = {57M25 (37C10 37E99)},
  MRNUMBER = {2105766},
MRREVIEWER = {Quach thi C\^am V\^an},
       DOI = {10.2140/pjm.2004.217.61},
       URL = {https://doi.org/10.2140/pjm.2004.217.61},
}

@article {tamagawa,
    AUTHOR = {Tamagawa, Akio},
     TITLE = {The {G}rothendieck conjecture for affine curves},
   JOURNAL = {Compositio Math.},
  FJOURNAL = {Compositio Mathematica},
    VOLUME = {109},
      YEAR = {1997},
    NUMBER = {2},
     PAGES = {135--194},
      ISSN = {0010-437X,1570-5846},
   MRCLASS = {14H30 (11G20 14E20)},
  MRNUMBER = {1478817},
MRREVIEWER = {Hiroaki\ Nakamura},
       DOI = {10.1023/A:1000114400142},
       URL = {https://doi.org/10.1023/A:1000114400142},
}

@article {hilden,
    AUTHOR = {Hilden, Hugh M. and Lozano, Mar\'ia Teresa and Montesinos,
              Jos\'e{} Mar\'ia},
     TITLE = {On knots that are universal},
   JOURNAL = {Topology},
  FJOURNAL = {Topology. An International Journal of Mathematics},
    VOLUME = {24},
      YEAR = {1985},
    NUMBER = {4},
     PAGES = {499--504},
      ISSN = {0040-9383},
   MRCLASS = {57M25 (57M12 57N10)},
  MRNUMBER = {816529},
MRREVIEWER = {Wolfgang\ H.\ Heil},
       DOI = {10.1016/0040-9383(85)90019-9},
       URL = {https://doi.org/10.1016/0040-9383(85)90019-9},
}

@book {widder,
    AUTHOR = {Widder, David Vernon},
     TITLE = {The {L}aplace {T}ransform},
    SERIES = {Princeton Mathematical Series},
    VOLUME = {vol. 6},
 PUBLISHER = {Princeton University Press, Princeton, NJ},
      YEAR = {1941},
     PAGES = {x+406},
   MRCLASS = {42.4X},
  MRNUMBER = {5923},
MRREVIEWER = {J.\ D.\ Tamarkin},
}

@book {sga,
     TITLE = {Rev\^etements \'etales et groupe fondamental ({SGA} 1)},
    SERIES = {Documents Math\'ematiques (Paris)},
    VOLUME = {3},
 PUBLISHER = {Soci\'et\'e{} Math\'ematique de France, Paris},
    author={Grothendieck, Alexander},
      YEAR = {2003},
     PAGES = {xviii+327},
      ISBN = {2-85629-141-4},
   MRCLASS = {14E20 (14-06 14F35)},
  MRNUMBER = {2017446},
}

@article {wilkes,
    AUTHOR = {Wilkes, Gareth},
     TITLE = {Relative cohomology theory for profinite groups},
   JOURNAL = {J. Pure Appl. Algebra},
  FJOURNAL = {Journal of Pure and Applied Algebra},
    VOLUME = {223},
      YEAR = {2019},
    NUMBER = {4},
     PAGES = {1617--1688},
      ISSN = {0022-4049,1873-1376},
   MRCLASS = {20J06 (20E18 22C05 57M27)},
  MRNUMBER = {3906519},
MRREVIEWER = {Nadia\ P.\ Mazza},
       DOI = {10.1016/j.jpaa.2018.07.001},
       URL = {https://doi.org/10.1016/j.jpaa.2018.07.001},
}

@book {serre,
    AUTHOR = {Serre, Jean-Pierre},
     TITLE = {Galois cohomology},
      NOTE = {Translated from the French by Patrick Ion and revised by the
              author},
 PUBLISHER = {Springer-Verlag, Berlin},
      YEAR = {1997},
     PAGES = {x+210},
      ISBN = {3-540-61990-9},
   MRCLASS = {12G05 (11R34)},
  MRNUMBER = {1466966},
       DOI = {10.1007/978-3-642-59141-9},
       URL = {https://doi.org/10.1007/978-3-642-59141-9},
}

@incollection {hmm,
    AUTHOR = {Hillman, Jonathan and Matei, Daniel and Morishita, Masanori},
     TITLE = {Pro-{$p$} link groups and {$p$}-homology groups},
 BOOKTITLE = {Primes and knots},
    SERIES = {Contemp. Math.},
    VOLUME = {416},
     PAGES = {121--136},
 PUBLISHER = {Amer. Math. Soc., Providence, RI},
      YEAR = {2006},
      ISBN = {978-0-8218-3456-5; 0-8218-3456-8},
   MRCLASS = {57M25 (11R33)},
  MRNUMBER = {2276139},
       DOI = {10.1090/conm/416/07890},
       URL = {https://doi.org/10.1090/conm/416/07890},
}

@article{pop2011lectures,
  title={Lectures on anabelian phenomena in geometry and arithmetic},
  author={Pop, Florian},
  journal={Non-abelian fundamental groups and Iwasawa theory},
  volume={393},
  pages={1},
  year={2011},
  publisher={Cambridge University Press}
}

@article{karshon2026pro,
  title={Pro-$l$-by-cyclotomic and tamely ramified variants of the Neukirch-Uchida Theorem},
  author={Karshon, Ido and Shusterman, Mark},
  journal={arXiv preprint arXiv:2601.01251},
  year={2026}
}

@article{gropper2023surfaces,
  title={Surfaces and p-adic fields I: Dehn twists},
  author={Gropper, Nadav},
  journal={arXiv preprint arXiv:2303.04309},
  year={2023}
}

@article{mochizuki1997version,
  title={A version of the Grothendieck conjecture for p-adic local fields},
  author={Mochizuki, Shinichi},
  journal={International Journal of Mathematics},
  volume={8},
  number={4},
  pages={499--506},
  year={1997},
  publisher={Singapore: World Scientific, c1990-}
}

@article{uchida1977isomorphisms,
  title={Isomorphisms of Galois groups of algebraic function fields},
  author={Uchida, K{\^o}ji},
  journal={Annals of Mathematics},
  volume={106},
  number={3},
  pages={589--598},
  year={1977}
}

@article{mathew2016galois,
  title={The Galois group of a stable homotopy theory},
  author={Mathew, Akhil},
  journal={Advances in Mathematics},
  volume={291},
  pages={403--541},
  year={2016},
  publisher={Elsevier}
}

@article {uekiiwasawa,
    AUTHOR = {Ueki, Jun},
     TITLE = {On the {I}wasawa {$\mu$}-invariants of branched
              {$\mathbb{Z}_p$}-covers},
   JOURNAL = {Proc. Japan Acad. Ser. A Math. Sci.},
  FJOURNAL = {Japan Academy. Proceedings. Series A. Mathematical Sciences},
    VOLUME = {92},
      YEAR = {2016},
    NUMBER = {6},
     PAGES = {67--72},
      ISSN = {0386-2194},
   MRCLASS = {57M12 (11R23 57M25)},
  MRNUMBER = {3508576},
MRREVIEWER = {Gabriel\ D.\ Villa-Salvador},
       DOI = {10.3792/pjaa.92.67},
       URL = {https://doi.org/10.3792/pjaa.92.67},
}

@article {uekikida,
    AUTHOR = {Ueki, Jun},
     TITLE = {On the {I}wasawa invariants for links and {K}ida's formula},
   JOURNAL = {Internat. J. Math.},
  FJOURNAL = {International Journal of Mathematics},
    VOLUME = {28},
      YEAR = {2017},
    NUMBER = {6},
     PAGES = {1750035, 30},
      ISSN = {0129-167X,1793-6519},
   MRCLASS = {57M27 (11R23 11S15 57M12 57M25 57M60)},
  MRNUMBER = {3663789},
MRREVIEWER = {L.\ Neuwirth},
       DOI = {10.1142/S0129167X17500355},
       URL = {https://doi.org/10.1142/S0129167X17500355},
}

@article {hilden3fold,
    AUTHOR = {Hilden, Hugh M.},
     TITLE = {Every closed orientable {$3$}-manifold is a {$3$}-fold
              branched covering space of {$S\sp{3}$}},
   JOURNAL = {Bull. Amer. Math. Soc.},
  FJOURNAL = {Bulletin of the American Mathematical Society},
    VOLUME = {80},
      YEAR = {1974},
     PAGES = {1243--1244},
      ISSN = {0002-9904},
   MRCLASS = {55A10 (57A10)},
  MRNUMBER = {350719},
MRREVIEWER = {J.\ S.\ Birman},
       DOI = {10.1090/S0002-9904-1974-13699-2},
       URL = {https://doi.org/10.1090/S0002-9904-1974-13699-2},
}

@article {ivanov,
    AUTHOR = {Ivanov, Alexander B.},
     TITLE = {On a generalization of the {N}eukirch-{U}chida theorem},
   JOURNAL = {Mosc. Math. J.},
  FJOURNAL = {Moscow Mathematical Journal},
    VOLUME = {17},
      YEAR = {2017},
    NUMBER = {3},
     PAGES = {371--383},
      ISSN = {1609-3321,1609-4514},
   MRCLASS = {11R34 (11R37 14G32)},
  MRNUMBER = {3711002},
MRREVIEWER = {Th\cfac ong\ Nguy\cftil en-Quang-\Dbar\cftil o},
       DOI = {10.17323/1609-4514-2017-17-3-371-383},
       URL = {https://doi.org/10.17323/1609-4514-2017-17-3-371-383},
}

@article {shimizu,
    AUTHOR = {Shimizu, Ryoji},
     TITLE = {The {N}eukirch-{U}chida theorem with restricted ramification},
   JOURNAL = {J. Reine Angew. Math.},
  FJOURNAL = {Journal f\"ur die Reine und Angewandte Mathematik. [Crelle's
              Journal]},
    VOLUME = {785},
      YEAR = {2022},
     PAGES = {187--217},
      ISSN = {0075-4102,1435-5345},
   MRCLASS = {11R32 (11R37 14G32)},
  MRNUMBER = {4402495},
MRREVIEWER = {Cornelius\ Greither},
       DOI = {10.1515/crelle-2021-0090},
       URL = {https://doi.org/10.1515/crelle-2021-0090},
}

@article {hoshi,
    AUTHOR = {Hoshi, Yuichiro},
     TITLE = {On the field-theoreticity of homomorphisms between the
              multiplicative groups of number fields},
   JOURNAL = {Publ. Res. Inst. Math. Sci.},
  FJOURNAL = {Publications of the Research Institute for Mathematical
              Sciences},
    VOLUME = {50},
      YEAR = {2014},
    NUMBER = {2},
     PAGES = {269--285},
      ISSN = {0034-5318,1663-4926},
   MRCLASS = {11R04 (12E99)},
  MRNUMBER = {3223474},
MRREVIEWER = {Art\=uras\ Dubickas},
       DOI = {10.4171/PRIMS/133},
       URL = {https://doi.org/10.4171/PRIMS/133},
}

@book {poitou,
    AUTHOR = {Poitou, Georges},
     TITLE = {Cohomologie galoisienne des modules finis},
    SERIES = {Travaux et Recherches Math\'ematiques},
    VOLUME = {No. 13},
      NOTE = {S\'eminaire de l'Institut de Math\'ematiques de Lille, sous la
              direction de G. Poitou},
 PUBLISHER = {Dunod, Paris},
      YEAR = {1967},
     PAGES = {xviii+279},
   MRCLASS = {18.20 (10.00)},
  MRNUMBER = {219591},
}

@article {niibo,
    AUTHOR = {Niibo, Hirofumi},
     TITLE = {Id\`elic class field theory for 3-manifolds},
   JOURNAL = {Kyushu J. Math.},
  FJOURNAL = {Kyushu Journal of Mathematics},
    VOLUME = {68},
      YEAR = {2014},
    NUMBER = {2},
     PAGES = {421--436},
      ISSN = {1340-6116,1883-2032},
   MRCLASS = {57M27 (11R37 11S31)},
  MRNUMBER = {3243372},
MRREVIEWER = {W.\ Narkiewicz},
       DOI = {10.2206/kyushujm.68.421},
       URL = {https://doi.org/10.2206/kyushujm.68.421},
}
\end{document}